\theoremstyle{plain}
\newtheorem{theorem}{Theorem}
\newtheorem{lemma}{Lemma}[section]
\newtheorem{proposition}[lemma]{Proposition}
\newtheorem{corollary}[lemma]{Corollary}
\theoremstyle{definition}
\newtheorem{definition}[lemma]{Definition}
\newtheorem{assumption}{Assumption}
\newtheorem*{condition}{Condition}
\theoremstyle{remark}
\newtheorem{remark}[lemma]{Remark}
\newtheorem{example}[lemma]{Example}
\def \CSmooth(#1,#2){\mathcal{C}_{#1,#2}}
\def \Mgale(#1,#2){M_{#1}^{#2}}
\def \Ngale(#1,#2){\mathcal{N}_{#1}^{#2}}
\newcommand{\pt}{\mathsf{p}}
\newcommand{\Uone}[1]{U^{(1)}_{{#1}}}
\newcommand{\Utwo}[3]{U^{(2)}_{{#1},{#2},{#3}}}
\newcommand{\Utildetwo}[3]{\tilde{U}^{(2)}_{{#1},{#2},{#3}}}
\newcommand{\avg}[2]{{\rm Av}_{#2}\left[{#1}\right]}
\newcommand{\tc}{\tau_N}
\title[Local ergodicity in the exclusion process on an infinite weighted graph]{Local ergodicity in the exclusion process on an infinite weighted graph}
\author{Joe P.\@ Chen}
\address{Department of Mathematics, Colgate University, Hamilton, NY 13346, USA.
}
\email{jpchen@colgate.edu}
\urladdr{\url{http://math.colgate.edu/~jpchen}}
\thanks{Research partially supported by NSF grants DMS-1262929 (PI: Luke Rogers) and DMS-1613025 (PI: Alexander Teplyaev), and the Research Council of Colgate University.}
\date{\today}
\keywords{Exclusion process; local ergodicity; moving particle lemma; Dirichlet forms; effective resistance; random walks; strongly recurrent graphs.}
\subjclass[2010]{28A80, 31C20, 60K35, 82C22, 82C35} 
\begin{document}


\renewcommand{\theequation}{\thesection.\arabic{equation}}
\numberwithin{equation}{section}

\maketitle

\begin{abstract}
We establish an abstract local ergodic theorem, under suitable space-time scaling, for the (boundary-driven) symmetric exclusion process on an increasing sequence of balls covering an infinite weighted graph. The proofs are based on 1-block and 2-blocks estimates utilizing the resistance structure of the graph; the moving particle lemma established recently by the author; and discrete harmonic analysis. Our ergodic theorem applies to any infinite weighted graph upon which random walk is strongly recurrent in the sense of Barlow, Delmotte, and Telcs; these include many trees, fractal graphs, and random graphs arising from percolation.

The main results of this paper are used to prove the joint density-current hydrodynamic limit of the boundary-driven exclusion process on the Sierpinski gasket, described in an upcoming paper with M.\@ Hinz and A.\@ Teplyaev.

\tableofcontents
\end{abstract}

\section{Introduction}


A main topic in statistical mechanics is the study of the emergence of collective phenomena arising from microscopic models. For instance, water is made out of $\sim 10^{23}$ water molecules, each interacting with one another through intermolecular forces (hydrogen bonds, Van der Waals forces). However, when one studies the macroscopic features of water, such as its density and the viscosity, the molecular details are ``averaged out'' and become less important. This is very much in the spirit of the \emph{law of large numbers} in probability theory. In a nutshell, one would like to justify, in a mathematically rigorous sense, how the hydrodynamics of water arises from interactions between the water molecules.

To embark upon this challenging problem, we need to first understand how to average the microscopic variables to obtain their macroscopic counterparts. In the physics literature this procedure often goes by the name ``renormalization'' or ``coarse-graining.'' If the underlying graph is the Euclidean lattice $\mathbb{Z}^d$, which are invariant under lattice translations and rotations, then one can use this symmetry to establish \emph{ergodicity} (mixing of space-time averages), which then justifies the aformentioned replacement by averages. In fact many existing proofs in the literature take direct advantage of the ergodicity under spatial translations and rotations.

That said, in real-life applications there are uncountably many natural or artificial networks which are not Euclidean lattices: some are trees, other have self-similar structures, and still others---\emph{random graphs}---have edges which may be present or absent with certain probabilities. On these networks, translational or rotational invariance is broken, so in order to establish limit theorems one is compelled to find alternative mechanisms which generate ergodicity.


The main goal of this paper is to establish an abstract local ergodic (or coarse graining) theorem for one of the most commonly studied interacting particle systems---the symmetric exclusion process---on an infinite connected weighted graph. See \S\ref{sec:exclusion} for the definition of the exclusion process, \cite{AldousFill, Spitzer, IPSStFlour} for introductory accounts, \cite{KipnisLandim, LiggettBook, Spohn} for technical backgrounds, and \cite{ABDS13, BDGJL15} and references therein for connections with non-equilibrium statistical mechanics. Throughout the paper, \emph{no assumption is made about the spatial symmetries of the underlying graph.} To effect the coarse graining argument, we use inputs from (discrete) potential theory and harmonic analysis (such as hitting estimates of random walks, harmonic functions, and Dirichlet energy). Upon establishing our main results, we will verify that our assumptions are satisfied on all \emph{(very) strongly recurrent weighted graphs}, in the sense of Barlow \cite{BarlowValues, BCK05}, Delmotte \cite{Delmotte}, and Telcs \cites{Telcs01, Telcs01_2}.

The present paper is the second of a four-part series establishing the hydrodynamic limit of the (boundary-driven) exclusion process on fractals, which has been summarized as a short review in \cite{SSEPreview}. The first part \cite{ChenMPL}, on the moving particle lemma for the exclusion process on a weighted graph, plays a crucial role in the proof of the local ergodic theorems in this paper. In turn, the results of this paper are used to establish hydrodynamic limit theorems for the exclusion process on the Sierpinski gasket in \cite{ChenTeplyaevSGHydro}. Since the scaling limit is a solution to a nonlinear PDE on a singular space, it behooves us to address issues of its existence, uniqueness, and regularity. These are described in \cite{CHTPDE}.

\section{Setup and main results} \label{sec:main}

Throughout the paper we assume that the undirected graph $G=(V,E)$ is connected and locally finite. 
Connected means that for any $x, y\in V$, there exists a sequence $\{x_0=1, x_1, \cdots, x_{n-1}, x_n=y\}$ in $V$ such that $x_{i-1} x_i \in E$ for all $i=1,2,\cdots, n$.
Locally finite means that the degree of every vertex is finite.
When we endow a direction upon an edge $e\in E$, its tail vertex will be denoted $\underline{e}$, and the head vertex $\overline{e}$.

In what follows, given a denumerable set $\Lambda$, $|\Lambda|$ stands for the cardinality of $\Lambda$. We denote the average of $g: \Lambda\to\mathbb{R}$ over $\Lambda$ by $\avg{g}{\Lambda} := \frac{1}{|\Lambda|}\sum_{z\in \Lambda} g(z)$. For each $\alpha\in [0,1]$ (resp.\@ each function $\gamma: \Lambda \to [0,1]$), let $\nu_\alpha^\Lambda$ (resp.\@ $\nu_{\gamma(\cdot)}^\Lambda$) denote the product Bernoulli measure on $\{0,1\}^\Lambda$ with marginal $\nu_\alpha^\Lambda\left(\{\eta: \eta(x)=1\}\right)=\alpha$ (resp.\@ $\nu_{\gamma(\cdot)}^\Lambda\left(\{\eta:\eta(x)=1\}\right)=\gamma(x)$) for each $x\in \Lambda$.

Given a Borel measure $\mu$ and a function $h\in L^1(\mu)$, we will adopt the shorthand $\mu[h] := \int\, h\,d\mu$ . 

Unless otherwise noted, the capitalized $C$ denotes a positive constant which may change from line to line. The dependence of the constants is indicated in subscripts, \emph{e.g.} $C_{\alpha_1, \alpha_2, \cdots}$ depends on $\alpha_1, \alpha_2, \cdots$. 
 
\subsection{Random walk on a weighted graph}

Consider a locally finite connected graph $\Gamma= (V(\Gamma), E(\Gamma))$ endowed with conductances ${\bf c} = (c_{xy})_{xy\in E(\Gamma)}$, where $c_{xy} > 0$. The pair $(\Gamma, {\bf c})$ is called a \emph{weighted graph}. For $x\in V(\Gamma)$ let $c_x = \sum_{y\sim x} c_{xy}$. The conductances induce a measure $\mathcal{V}$ on $V(\Gamma)$ given by 
$
\mathcal{V}(A) = \sum_{y\in A} c_x
$
for $A\subset V(\Gamma)$. 

The symmetric random walk process on $(\Gamma, {\bf c})$ is a Markov chain on $V(\Gamma)$ with transition probability
\[
p(x,y) = \left\{\begin{array}{ll} c_{xy}/c_x, & \text{if}~x\sim y,\\ 0, &\text{else}.\end{array}\right.
\]
A standard fact is that this process is reversible w.r.t.\@ the measure $\mathcal{V}$. The corresponding Dirichlet energy is
\[
\mathcal{E}^{\rm el}_{(\Gamma,{\bf c})}(f) = \sum_{xy\in E} c_{xy} [f(x)-f(y)]^2,\quad f: V(\Gamma)\to\mathbb{R}.
\]

Let $d$ be the graph metric on $\Gamma$, and for each $x\in V(\Gamma)$ and each $r\in \mathbb{N}_0$, let $B(x,r) := \{y\in V(\Gamma): d(x,y) < r\}$ be the open ball of radius $r$ centered at $x$. We set the volume of the ball centered at $x$ by
$
\mathcal{V}(x, r) = \mathcal{V}(B(x,r))
$.
Next, the hitting time of a set $A\subset V(\Gamma)$ by a random walk is denoted $T_A :=\inf\{t>0: X_t \in  A\}$.  We set the mean exit time from the ball centered at $x$ by
$
\mathcal{T}(x,r) = \mathbf{E}^x\left[T_{B(x,r)^c}\right]
$.
Finally, given two subsets $A_1, A_2 \subset V(\Gamma)$, the \textbf{effective resistance} between $A_1$ and $A_2$ is
\begin{align}
\label{Reff}
R_{\rm eff}^{(\Gamma, {\bf c})}(A_1,A_2) = \left(\inf\left\{ \mathcal{E}^{\rm el}_{(\Gamma,{\bf c})}(h) ~\bigg|~ h:V(\Gamma)\to\mathbb{R},~ h|_{A_1}=1,~h|_{A_2}=0\right\}\right)^{-1},
\end{align}
with the convention that $\inf \emptyset = \infty$.

\subsection{Exclusion process on a weighted graph} \label{sec:exclusion}

The symmetric exclusion process (SEP) on $(\Gamma, {\bf c})$ is a continuous-time Markov chain $(\eta_t)_{t\geq 0}$ on $\{0,1\}^{V(\Gamma)}$ with infinitesimal generator
\begin{align}
\left(\mathcal{L}_{(\Gamma, {\bf c})}^{\rm EX} f\right)(\eta) = \sum_{xy\in E(\Gamma)} c_{xy} (\nabla_{xy} f)(\eta), \quad f: \{0,1\}^{V(\Gamma)} \to \mathbb{R},
\end{align}
where $(\nabla_{xy} f)(\eta) = f(\eta^{xy})-f(\eta)$
and
\begin{align}
\eta^{xy}(z) = \left\{\begin{array}{ll}\eta(y),& \text{if}~z=x, \\ \eta(x), & \text{if}~z=y, \\ \eta(z), & \text{otherwise}.\end{array} \right.
\end{align}
Informally speaking, one starts with a configuration $\zeta$ in which $k$ vertices are occupied with a particle, and the remaining vertices are empty. All particles are deemed indistinguishable. A transition from $\zeta$ to $\zeta^{xy}$ occurs with rate $c_{xy}$ if and only if one of the vertices $\{x,y\}$ is occupied and the other is empty.

There are two key properties of the SEP. First, the total number of particles is conserved in the process. Second, the process is reversible with respect to any constant-density product Bernoulli measure $\nu_\alpha$ on $\{0,1\}^{V(\Gamma)}$, $\alpha\in [0,1]$, which has marginal $\nu_\alpha\{\zeta: \zeta(x)=1\}=\alpha
$ for all $x\in V(\Gamma)$. 



\begin{definition}
We say that $\phi: V(\Gamma) \times \{0,1\}^{V(\Gamma)} \to \mathbb{R}$ is a \emph{local function bundle for vertices} if there exists $r_\phi \in (0,\infty)$ such that for any $x\in V(\Gamma)$, $\phi_x :=\phi(x,\cdot)$ depends only on $\{\eta(z): z\in B(x,r_\phi)\}$ Likewise, we say that $\phi: E(\Gamma) \times \{0,1\}^{V(\Gamma)} \to \mathbb{R}$ is a \emph{local function bundle for edges} if there exists $r_\phi\in (0,\infty)$ such that for any $e= (\underline{e},\overline{e}) \in E(\Gamma)$, $\phi_e:= \phi(e,\cdot)$ depends only on $\{\eta(z) : z\in B(\underline{e}, r_\phi)\}$.
\end{definition}

\begin{example}
We list three examples of local function bundles.
\begin{enumerate}
\item $\phi(x,\eta) = \eta(x)$.
\item $\phi(x,\eta) = \sum_{y: xy\in E(\Gamma)} b_{xy} \eta(x)\eta(y)$, where $b_{xy} \in \mathbb{R}$. In practice, we consider the case $b_{xy} \equiv 1$ or the case $b_{xy}= c_{xy}$.
\item $\phi(e,\eta) = c_e \eta(\underline{e})\eta(\overline{e})$, where $c_e$ is the conductance of the edge $e$.
\end{enumerate}
The first example is rather trivial. The second and third examples are important for the proof of hydrodynamic limit of the exclusion process \cite{GPV88, KOV89, KipnisLandim}.
\end{example}

\begin{remark}
The terminology \emph{local function bundle} carries a similar notion as a \emph{cylinder function} on translation-invariant graphs. It is introduced in \cite{Tanaka}, although we do not assume that $\phi$ is invariant under the action of some infinite group. 
\end{remark}

Given a local function bundle for vertices $\phi$ and an $x\in V(\Gamma)$, we define the \emph{global average} of $\phi_x$ with respect to the product Bernoulli measure $\nu_\alpha$ on $\{0,1\}^{V(\Gamma)}$, $\alpha\in [0,1]$, by
\begin{align}
\Phi_x(\alpha) := \nu_\alpha[\phi_x].
 \end{align}
Since $\phi_x$ depends only on $\{\eta(x): x\in B(x,r_\phi)\}$, and $\nu_\alpha$ is product Bernoulli, it is direct to verify that $\alpha\mapsto \Phi_x(\alpha)$ is a Lipschitz function. 

Similarly, given a local function bundle for edges $\phi$ and an edge $e\in E(\Gamma)$, we define the \emph{global average} of $\phi_e$ with respect to $\nu_\alpha$ by
\begin{align}
\Phi_e(\alpha) := \nu_\alpha[\phi_e].
\end{align}
The map $\alpha \mapsto \Phi_e(\alpha)$ is Lipschitz by the same reasoning.


\subsection{Local ergodicity in the exclusion process} \label{sec:superexp}

Fix a vertex $o\in V(\Gamma)$ (``origin'') and a monotone increasing sequence of radii $(r_N)_{N\geq 1}$ with $r_1=1$ and $r_N \uparrow \infty$. This allows us to define an exhaustion of $\Gamma$ by finite graphs $(\Gamma_N)_{N\geq 1}$, where $V(\Gamma_N)= B(o,r_N)$ and $E(\Gamma_N) = \{xy\in E(\Gamma): x, y \in V(\Gamma_N)\}$. The edge conductances on $\Gamma_N$ are inherited from those on the mother graph $(\Gamma, {\bf c})$; we denote the corresponding finite weighted graph $(\Gamma_N, {\bf c})$.

Next, we introduce two increasing sequences of positive real numbers, $(\mathcal{V}_N)_{N\geq 1}$ and $(\mathcal{T}_N)_{N\geq 1}$. For applications, they will stand for, respectively, the sequence of \emph{mass} scales and \emph{time} scales; namely, $\mathcal{V}_N$ may stand for either $|B(o,r_N)|$ or $\mathcal{V}(o,r_N)$, and $\mathcal{T}_N$ may stand for the (extremal) expected time for a random walk starting in $B(o,r_N)$
to exit $B(o,r_N)$. Since the designations of these parameters vary with the weighted graph, we keep them as $\mathcal{V}_N$ and $\mathcal{T}_N$ for now, and defer their interpretations to \S\ref{sec:examples}. 


If $\phi: V(\Gamma)\times \{0,1\}^{V(\Gamma)}\to\mathbb{R}$ is a local function bundle for vertices, we set
\begin{align}
\label{eq:UNE}
U_{N,\epsilon}(x,\eta) := \phi_x(\eta)-  \Phi_x\left(\avg{\eta}{B(x,r_{\epsilon N})}\right), \qquad N\geq 1,~\epsilon\in [0,1].
\end{align}
Likewise, if $\phi: E(\Gamma)\times\{0,1\}^{V(\Gamma)}\to \mathbb{R}$ is a local function bundle for edges, we set
\begin{align}
\label{eq:UNE2}
U_{N,\epsilon}(e,\eta) := \phi_e(\eta)-  \Phi_e\left(\avg{\eta}{B(\underline{e},r_{\epsilon N})}\right), \qquad N\geq 1,~\epsilon\in [0,1].
\end{align}
Here and in what follows, $\epsilon N$ is to be understood as the integer part of $\epsilon N$.

For ease of notation, we will use $\pt$ to represent a vertex $x$ in the case of a local function bundle for vertices, or the tail vertex $\underline{e}$ of an edge $e$ in the case of a local function bundle for edges. So for instance, $B(\pt, r)$ stands for $B(x,r)$ in the former case, and $B(\underline{e},r)$ in the latter case. To abuse notation a bit further, we write $U_{N,\epsilon}(\pt, \eta)$  to denote \eqref{eq:UNE} in the former case, and \eqref{eq:UNE2} in the latter case.

We introduce the following assumptions.

\begin{assumption}
\label{ass:1}
\begin{align}
\liminf_{N\to\infty} \frac{\mathcal{T}_N}{\mathcal{V}_N} = \infty.
\end{align}
\end{assumption}

\begin{assumption}
\label{ass:2}
For each $x\in V(\Gamma)$ and each $y,z \in B(x,r_{\epsilon N})$,
\begin{align}
\label{eq:ass2}
\liminf_{\epsilon\downarrow 0} \liminf_{N\to\infty} \frac{\mathcal{T}_N}{\mathcal{V}_N} \left(R_{\rm eff}^{(\Gamma_N,{\bf c})}(y,z)\right)^{-1} = \infty.
\end{align}
\end{assumption}

Here is our main theorem.

\begin{theorem}[Local ergodicity in the exclusion process]
\label{thm:localrep}
Let $\mathbb{P}^N_{\alpha}$ be the law of the symmetric exclusion process $(\eta_t^N)_{t\geq 0}$ with generator $\mathcal{T}_N \mathcal{L}^{\rm EX}_{(\Gamma_N, {\bf c})}$, started from the product Bernoulli measure $\nu_\alpha$ on $\{0,1\}^{V(\Gamma_N)}$. Under Assumptions \ref{ass:1} and \ref{ass:2}, for each $T>0$ and each $\delta>0$,
\begin{equation}
\label{supexp}
\limsup_{\epsilon\downarrow 0} \limsup_{N\to\infty} \sup_{\pt} \frac{1}{\mathcal{V}_N} \log \mathbb{P}^N_{\alpha} \left\{ \left|\int_0^T\, U_{N,\epsilon}(\pt,\eta^N_t)\,dt\right|>\delta\right\} = -\infty,
\end{equation}
where $\pt$ stands for $x$ (resp.\@ $e$) in the case of a local function bundle for vertices (reps.\@ for edges), and the supremum runs over all $x\in V(\Gamma_N)$ (resp.\@ over all $e\in E(\Gamma_N)$).
\end{theorem}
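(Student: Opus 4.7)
The overall strategy is the classical Guo--Papanicolaou--Varadhan super-exponential estimate, adapted to a non--translation-invariant setting by exploiting the resistance-form structure of $(\Gamma,{\bf c})$ and the moving particle lemma of \cite{ChenMPL}.

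First I would reduce \eqref{supexp} to a static variational problem. By the exponential Chebyshev inequality and the Feynman--Kac / Rayleigh--Ritz bound for the reversible generator $\mathcal{T}_N\,\mathcal{L}^{\rm EX}_{(\Gamma_N,{\bf c})}$, for every $a>0$,
\[
\tfrac{1}{\mathcal{V}_N}\log\mathbb{P}^N_\alpha\!\left\{\left|\int_0^T U_{N,\epsilon}(\pt,\eta^N_t)\,dt\right|>\delta\right\}\le -a\delta+aT\sup_{f}\!\left\{\nu_\alpha[U_{N,\epsilon}(\pt,\cdot)f]-\tfrac{\mathcal{T}_N}{a\mathcal{V}_N}\,\mathcal{E}_N(\sqrt f\,)\right\}+o(1),
\]
where $f$ ranges over $\nu_\alpha$-densities and $\mathcal{E}_N$ is the Dirichlet form of the SEP on $(\Gamma_N,{\bf c})$. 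It then suffices to show that the supremum over $f$ and $\pt$ tends to $0$ under the iterated limit $\limsup_{\epsilon\downarrow 0}\limsup_{N\to\infty}$; letting $a\uparrow\infty$ afterwards delivers \eqref{supexp}.

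Next I would introduce an auxiliary microscopic scale $\ell\in\mathbb{N}$ (to be sent to infinity \emph{after} $N$) and decompose
\[
U_{N,\epsilon}(\pt,\eta)=\bigl[\phi_\pt(\eta)-\Phi_\pt(\avg{\eta}{B(\pt,\ell)})\bigr]+\bigl[\Phi_\pt(\avg{\eta}{B(\pt,\ell)})-\Phi_\pt(\avg{\eta}{B(\pt,r_{\epsilon N})})\bigr].
\]
The first (\emph{1-block}) piece depends on $\eta$ only through the fixed-size ball $B(\pt,\ell)$, so the standard entropy / equivalence-of-ensembles argument bounds the corresponding variational supremum by the maximum variance of $\phi_\pt$ under the canonical (hypergeometric) measures on $B(\pt,\ell)$ at fixed particle number; this vanishes as $\ell\to\infty$ by the local law of large numbers and Lipschitz continuity of $\Phi_\pt$. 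Assumption \ref{ass:1} enters here to absorb the $O(\mathcal{V}_N/\mathcal{T}_N)$ error produced when trading entropy for Dirichlet energy on a block of size $\ell$.

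The second (\emph{2-blocks}) piece is where the absence of translation invariance really bites, and I expect this to be the main obstacle. Using Lipschitz continuity of $\Phi_\pt$ I would bound it by a constant times $|\avg{\eta}{B(\pt,\ell)}-\avg{\eta}{B(\pt,r_{\epsilon N})}|$, and then write this difference as a double average, over pairs $(y,z)$ with $y\in B(\pt,\ell)$ and $z\in B(\pt,r_{\epsilon N})$, of the discrepancies $\eta(y)-\eta(z)$. The moving particle lemma of \cite{ChenMPL} allows each such discrepancy to be paid for by a factor $R^{(\Gamma_N,{\bf c})}_{\rm eff}(y,z)$ times the Dirichlet energy $\mathcal{E}_N(\sqrt{f}\,)$, whose coefficient, once balanced against $\mathcal{T}_N/(a\mathcal{V}_N)$, produces exactly the ratio appearing in \eqref{eq:ass2}. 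Assumption \ref{ass:2} then forces the bound to $0$ in the iterated limit. The technical crux is organizing the pair-sum so that each edge of $\Gamma_N$ is used with a multiplicity compatible with the Dirichlet form (so that the discrete Green-kernel / resistance bookkeeping is tight)---this is precisely the content of \cite{ChenMPL} and is the feature that distinguishes the argument from its Euclidean counterpart.
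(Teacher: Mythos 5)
Your overall architecture --- exponential Chebyshev plus Feynman--Kac reduction to a largest-eigenvalue variational problem, decomposition at an auxiliary scale $\ell$ into a 1-block and a 2-blocks piece, equivalence of ensembles for the 1-block piece with Assumption \ref{ass:1} making the Dirichlet-form coefficient diverge, and the moving particle lemma plus Assumption \ref{ass:2} for the 2-blocks piece --- is exactly the paper's strategy (the theorem itself is deduced from Theorems \ref{thm:1block} and \ref{thm:2block} via the decomposition \eqref{UDecomp} and a union bound). Your 1-block part is fine.

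The gap is in your 2-blocks step. After the Lipschitz bound on $\Phi_\pt$, the quantity you must control is $\bigl|\avg{\eta}{B(\pt,\ell)}-\avg{\eta}{B(\pt,r_{\epsilon N})}\bigr|$, with the absolute value forced by the nonlinearity of $\Phi_\pt$ (its sign need not match that of the difference of averages). Writing this difference as a double average of single-site discrepancies $\eta(y)-\eta(z)$ and ``paying $R_{\rm eff}(y,z)\,\mathcal{E}_N(\sqrt f)$ per discrepancy'' does not close the estimate: the moving particle lemma only converts the exchange Dirichlet form $\nu_\alpha\bigl[\sqrt f(-\nabla_{yz}\sqrt f)\bigr]$ into a resistance cost, and turning that into smallness of the expectation $\nu_\alpha[\,\cdot\,f]$ requires an integration by parts that exploits the sign of $\eta(y)-\eta(z)$, which the absolute value destroys; moving the absolute value inside the pair average leaves $\mathrm{Av}_{y,z}\,|\eta(y)-\eta(z)|$, which is of order one and does not vanish. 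The needed concentration has to come from comparing two \emph{large} blocks: this is why the paper partitions $B(\pt,r_{\epsilon N})$ into blocks of the same cardinality as $\Lambda_j(\pt)$ (Lemma \ref{lem:avg}, Proposition \ref{prop:goodavg}), uses the moving particle lemma only to add the few exchange bonds making the two-block system irreducible --- so the reduced variational problem lives on the fixed finite set $\Lambda^{(2)}(j,x,y)$, which is also what legitimizes the interchange of $N\to\infty$ with the supremum over densities --- and then invokes the hypergeometric equivalence-of-ensembles estimate (Lemma \ref{lem:U2vanish}) to make the canonical expectation of the absolute block-average difference vanish as $j\to\infty$. Your sketch contains no analogue of this last ingredient (nor a substitute, e.g.\@ a Cauchy--Schwarz/squaring argument removing the absolute value before a pairwise integration by parts), so as written the 2-blocks piece does not follow from the resistance bookkeeping alone, even granting Assumption \ref{ass:2}.
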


Assumptions \ref{ass:1} and \ref{ass:2} are satisfied on the so-called \emph{(very) strongly recurrent} weighted graphs, which include $\mathbb{Z}$; trees which support recurrent random walks; post-critically finite (p.c.f.\@) fractal graphs, such as the Sierpinski gasket graph; Sierpinski carpet graphs; and random graphs arising from percolation models. For the precise conditions and examples see \S\ref{sec:examples}.

As is known to experts in interacting particle systems, the proof of Theorem \ref{thm:localrep} relies upon the \emph{one-block estimate} and the \emph{two-blocks estimate}. The one-block estimate involves replacing spins by their averages over large microscopic blocks (of scale $j$), while the two-blocks estimate involves replacing spin averages over large microscopic blocks by spin averages over small macroscopic blocks (of scale $\epsilon N$). The claim is that both replacement costs vanish in the diffusive limit. We use a local version of these estimates, introduced in \cite{JLSLocal}, since our graphs generally lack translational invariance.

\begin{theorem}[Local one-block estimate]
\label{thm:1block}
Let $\pt$ stand for $x\in V(\Gamma)$ (resp.\@ $e\in E(\Gamma)$) in the case of a local function bundle for vertices (resp.\@ for edges), and $\{\Lambda_j(\pt)\}_{j\geq 1}$ be a sequence of increasing finite connected subsets of $\Gamma$ containing $\pt$, with $\lim_{j\to\infty} |\Lambda_j(\pt)|=\infty$. Define
\begin{equation}
\Uone{j}(\pt,\eta)  := \phi_\pt(\eta) - \Phi_\pt\left(\avg{\eta}{\Lambda_j(\pt)}\right), \quad \eta\in \{0,1\}^{V(\Gamma)}.
\end{equation}
Then under Assumption \ref{ass:1}, for each $T>0$ and each $\delta>0$,
\begin{align}
\label{1bp} \limsup_{j\to\infty} \limsup_{N\to\infty} \sup_{\pt} \frac{1}{\mathcal{V}_N} \log \mathbb{P}^N_\alpha \left[\left|\int_0^T \, \Uone{j}(\pt,\eta^N_t)\,dt\right|>\delta\right] &=-\infty,
\end{align}
where the supremum runs over all $x\in V(\Gamma_N)$ (resp.\@ over all $e\in E(\Gamma_N)$).
\end{theorem}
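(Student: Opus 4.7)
The plan is to follow the classical Guo--Papanicolaou--Varadhan superexponential estimate strategy, adapted to the non-translation-invariant setting as in \cite{JLSLocal}. Replacing $\Uone{j}$ by $-\Uone{j}$ handles the absolute value via a union bound, so it suffices to bound one-sided probabilities.

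First I would apply exponential Chebyshev and the Feynman--Kac formula: for any $B > 0$,
\begin{align*}
\mathbb{P}^N_\alpha\left[\int_0^T \Uone{j}(\pt, \eta^N_t)\,dt > \delta\right] &\leq e^{-B\mathcal{V}_N \delta}\, \mathbb{E}^N_\alpha\left[\exp\left(B\mathcal{V}_N \int_0^T \Uone{j}(\pt, \eta^N_t)\,dt\right)\right] \\
&\leq \exp\bigl(-B\mathcal{V}_N\, \delta + T\, \Lambda_{N,j,\pt,B}\bigr),
\end{align*}
where, by Rayleigh--Ritz, the principal eigenvalue $\Lambda_{N,j,\pt,B}$ of the symmetric operator $B\mathcal{V}_N\, \Uone{j}(\pt,\cdot) + \mathcal{T}_N\, \mathcal{L}^{\rm EX}_{(\Gamma_N, {\bf c})}$ in $L^2(\nu_\alpha)$ is
\begin{equation*}
\Lambda_{N,j,\pt,B} = \sup_{f:\, \nu_\alpha[f^2]=1} \left\{ B\mathcal{V}_N\, \nu_\alpha\bigl[\Uone{j}(\pt, \cdot)\, f^2\bigr] - \mathcal{T}_N\, \mathcal{E}^{\rm EX}_{(\Gamma_N, {\bf c}),\alpha}(f) \right\}.
\end{equation*}
Dividing by $\mathcal{V}_N$ and letting $B\to\infty$ at the end, the theorem reduces to showing that for every $B > 0$,
\begin{equation*}
\limsup_{j\to\infty}\, \limsup_{N\to\infty}\, \sup_\pt\, \sup_{f:\, \nu_\alpha[f^2]=1} \left\{ B\, \nu_\alpha\bigl[\Uone{j}(\pt, \cdot)\, f^2\bigr] - \frac{\mathcal{T}_N}{\mathcal{V}_N}\, \mathcal{E}^{\rm EX}_{(\Gamma_N, {\bf c}),\alpha}(f) \right\} \leq 0.
\end{equation*}

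Next I would localize. Since $\Uone{j}(\pt, \cdot)$ depends only on the coordinates in $\widetilde{\Lambda}_j(\pt) := \Lambda_j(\pt) \cup B(\pt, r_\phi)$, projecting $f^2 \nu_\alpha$ onto $\{0,1\}^{\widetilde{\Lambda}_j(\pt)}$ produces a density $\bar f_\pt$ with respect to $\nu_\alpha^{\widetilde{\Lambda}_j(\pt)}$ satisfying $\nu_\alpha[\Uone{j}(\pt,\cdot)\,f^2] = \nu_\alpha^{\widetilde{\Lambda}_j(\pt)}[\Uone{j}(\pt,\cdot)\, \bar f_\pt^2]$, and by convexity of the Dirichlet form the block form of $\bar f_\pt$ is dominated by $\mathcal{E}^{\rm EX}_{(\Gamma_N,{\bf c}),\alpha}(f)$. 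This is where Assumption \ref{ass:1} enters: since $\mathcal{T}_N/\mathcal{V}_N \to \infty$, the penalty $(\mathcal{T}_N/\mathcal{V}_N)\, \mathcal{E}^{\widetilde{\Lambda}_j(\pt)}_\alpha(\bar f_\pt)$ forces the block Dirichlet energy to vanish in the limit, so that $\bar f_\pt^2$ asymptotically depends on $\eta$ only through the particle count $\sum_{z\in \widetilde{\Lambda}_j(\pt)} \eta(z)$.

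The final step invokes the spectral gap of the exclusion process on $\widetilde{\Lambda}_j(\pt)$ restricted to each particle-number sector. Decomposing $\nu_\alpha^{\widetilde{\Lambda}_j(\pt)}$ into canonical measures $\nu_{k, \widetilde{\Lambda}_j(\pt)}$ (the uniform measures on configurations with $k$ particles) and using the spectral gap to absorb the non-constant component of $\bar f_\pt^2$ into the Dirichlet energy, I reduce the problem to estimating a weighted average over $k$ of $\nu_{k, \widetilde{\Lambda}_j(\pt)}[\Uone{j}(\pt,\cdot)]$. Since $\phi_\pt$ has range $r_\phi$ while $|\widetilde{\Lambda}_j(\pt)| \to \infty$, the equivalence of ensembles yields $\nu_{k, \widetilde{\Lambda}_j(\pt)}[\phi_\pt] = \Phi_\pt(k/|\widetilde{\Lambda}_j(\pt)|) + O(1/|\widetilde{\Lambda}_j(\pt)|)$; a concentration estimate for $\avg{\eta}{\Lambda_j(\pt)}$ under the canonical measure, combined with the Lipschitz property of $\alpha \mapsto \Phi_\pt(\alpha)$, then shows $\nu_{k, \widetilde{\Lambda}_j(\pt)}[\Uone{j}(\pt,\cdot)] \to 0$ as $j \to \infty$, uniformly in $k$.

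The main obstacle is uniformity in $\pt$: the spectral gap of SEP on $\widetilde{\Lambda}_j(\pt)$ and the equivalence-of-ensembles error depend on the block geometry, which can vary substantially from vertex to vertex on a non-translation-invariant graph. One avenue is to impose $\inf_\pt |\Lambda_j(\pt)| \to \infty$ with a uniform lower bound on the block spectral gap; another is to invoke the moving particle lemma of \cite{ChenMPL} to replace the spectral-gap step with a resistance-based comparison which is naturally uniform in $\pt$, reducing the question to estimates on $R_{\rm eff}^{(\Gamma_N,{\bf c})}$ that are controlled via the potential-theoretic inputs underlying the strongly recurrent hypothesis.
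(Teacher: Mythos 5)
Your overall route—exponential Chebyshev, Feynman--Kac, the Rayleigh--Ritz variational formula, localization of the Dirichlet form to the block, Assumption \ref{ass:1} to kill the penalized energy, and then canonical measures plus equivalence of ensembles—is exactly the paper's strategy, so up to the last step you are reproducing the given proof. The genuine problem is your final step. You propose to invoke a spectral gap for SEP on $\widetilde{\Lambda}_j(\pt)$ restricted to each particle-number sector, and you correctly identify that this cannot be made uniform in $\pt$ on a general weighted graph; but you then leave that obstacle unresolved, offering either an extra hypothesis (a uniform lower bound on block spectral gaps, which is not available and not assumed) or the moving particle lemma, which in this paper is the engine of the \emph{two-blocks} estimate (Theorem \ref{thm:2block}) and is not what rescues the one-block step. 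As written, the proposal therefore does not close.

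The missing idea is that no spectral gap (and hence no quantitative uniformity in $\pt$) is needed at all: after the comparison of Lemma \ref{lem:DFEst1} reduces everything to a variational problem over densities on the finite block $\Lambda_j(\pt)$, one takes $N\to\infty$ at fixed $j$ and $\pt$, interchanges the limit with the supremum over the compact set of block densities, and observes that since $\mathcal{T}_N/\mathcal{V}_N\to\infty$ any density with strictly positive block Dirichlet energy contributes $-\infty$; hence the limiting supremum runs only over densities with \emph{exactly zero} block energy, which by irreducibility within each particle-number hyperplane are constant there. This is the soft version of the statement you half-write (``the penalty forces the block Dirichlet energy to vanish''), and it replaces your spectral-gap absorption entirely. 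What remains is $\sup_{0\le k\le|\Lambda_j(\pt)|}\nu_{*,k}^{\Lambda_j(\pt)}\bigl[\pm\Uone{j}(\pt,\cdot)\bigr]$, and the equivalence-of-ensembles bound (hypergeometric versus binomial, Lemma \ref{lem:ensembles}) depends only on $|\Lambda_j(\pt)|$ and the Lipschitz constant of $\Phi_\pt$, not on the block geometry, so the uniformity-in-$\pt$ worry you raise does not arise there. With that correction your argument coincides with the paper's proof.
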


\begin{theorem}[Local two-blocks estimate]
\label{thm:2block}
Assume the hypothesis of Theorem \ref{thm:1block}, and let
\begin{align}
\Utwo{N}{\epsilon}{j}(\pt,\eta) := \Phi_\pt\left(\avg{\eta}{\Lambda_j(\pt)}\right) -\Phi_\pt\left(\avg{\eta}{B(\pt, r_{\epsilon N})}\right),\quad \eta\in \{0,1\}^{V(\Gamma)}.
\end{align}
Then under Assumption \ref{ass:2}, for each $T>0$ and each $\delta>0$,
\begin{align}
\label{2bp} \limsup_{j\to\infty} \limsup_{\epsilon\downarrow 0} \limsup_{N\to\infty} \sup_\pt \frac{1}{\mathcal{V}_N} \log \mathbb{P}^N_\alpha \left[\left|\int_0^T \, \Utwo{N}{\epsilon}{j}(\pt,\eta^N_t) \,dt \right|>\delta \right] &=-\infty.
\end{align}
\end{theorem}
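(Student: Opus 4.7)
I will follow the standard Feynman--Kac / Rayleigh--Ritz route for super-exponential estimates in interacting particle systems, using the moving particle lemma of \cite{ChenMPL} in place of the translation-invariance input from \cite{KipnisLandim}. For any $M>0$, exponential Chebyshev combined with Feynman--Kac (applied to the self-adjoint generator $\mathcal T_N\mathcal L^{\rm EX}_{(\Gamma_N,{\bf c})}$ on $L^2(\nu_\alpha)$) and the Rayleigh--Ritz principle reduces \eqref{2bp} to showing
\[
\limsup_{j\to\infty}\limsup_{\epsilon\downarrow 0}\limsup_{N\to\infty}\sup_\pt \sup_{\nu_\alpha[f^2]=1}\left\{M\,|\nu_\alpha[\,\Utwo{N}{\epsilon}{j}(\pt,\cdot)\,f^2]| - \frac{\mathcal T_N}{\mathcal V_N}\,\mathcal E^{\rm EX}_{(\Gamma_N,{\bf c})}(f,f)\right\} \leq 0,
\]
with $\mathcal E^{\rm EX}_{(\Gamma_N,{\bf c})}(f,f):=\frac{1}{2}\sum_{xy\in E(\Gamma_N)}c_{xy}\,\nu_\alpha[(f-f^{xy})^2]$; sending $M\to\infty$ at the end delivers the super-exponential decay.

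Since $\phi_\pt$ has support of uniformly bounded size, $\Phi_\pt$ is a polynomial on $[0,1]$ of uniformly bounded degree, which admits the factorization $\Phi_\pt(A_1)-\Phi_\pt(A_2)=(A_1-A_2)H_\pt$ with $|H_\pt|\leq L$ and $H_\pt$ jointly Lipschitz in its two arguments (all constants uniform in $\pt$). Setting $A_1:=\avg{\eta}{\Lambda_j(\pt)}$, $A_2:=\avg{\eta}{B(\pt,r_{\epsilon N})}$, and $V:=\Utwo{N}{\epsilon}{j}(\pt,\cdot)$, the pair identity
\[
A_1-A_2 = \frac{1}{|\Lambda_j(\pt)|\,|B(\pt,r_{\epsilon N})|}\sum_{y\in\Lambda_j(\pt)}\sum_{z\in B(\pt,r_{\epsilon N})}[\eta(y)-\eta(z)]
\]
splits $\nu_\alpha[Vf^2]$ into a pair sum of $\nu_\alpha[(\eta(y)-\eta(z))H_\pt f^2]$. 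Invariance of $\nu_\alpha$ under the exchange $\eta\leftrightarrow\eta^{yz}$ yields the antisymmetry identity $\nu_\alpha[(\eta(y)-\eta(z))g]=\tfrac{1}{2}\nu_\alpha[(\eta(y)-\eta(z))(g-g^{yz})]$ with $g=H_\pt f^2$; because $A_2$ is invariant under this exchange for any $y,z\in B(\pt,r_{\epsilon N})$ and $|A_1-A_1^{yz}|\leq|\Lambda_j(\pt)|^{-1}$, one gets $|H_\pt-H_\pt^{yz}|=O(|\Lambda_j(\pt)|^{-1})$. Expanding $g-g^{yz}=H_\pt(f-f^{yz})(f+f^{yz})+(H_\pt-H_\pt^{yz})(f^{yz})^2$, applying Cauchy--Schwarz, and invoking the moving particle lemma $\nu_\alpha[(f-f^{yz})^2]\leq C_{\rm MPL}\,R^{(\Gamma_N,{\bf c})}_{\rm eff}(y,z)\,\mathcal E^{\rm EX}_{(\Gamma_N,{\bf c})}(f,f)$ from \cite{ChenMPL}, the pair estimate becomes
\[
|\nu_\alpha[(\eta(y)-\eta(z))H_\pt f^2]| \leq L\sqrt{C_{\rm MPL}\,R^{(\Gamma_N,{\bf c})}_{\rm eff}(y,z)\,\mathcal E^{\rm EX}_{(\Gamma_N,{\bf c})}(f,f)} + \frac{L}{|\Lambda_j(\pt)|}.
\]

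Averaging over pairs, defining $R^\star_N(\pt,\epsilon,j):=\sup_{y\in\Lambda_j(\pt),z\in B(\pt,r_{\epsilon N})}R^{(\Gamma_N,{\bf c})}_{\rm eff}(y,z)$, and optimizing via $a\sqrt{x}-bx\leq a^2/(4b)$ produce
\[
M|\nu_\alpha[Vf^2]|-\frac{\mathcal T_N}{\mathcal V_N}\mathcal E^{\rm EX}_{(\Gamma_N,{\bf c})}(f,f) \leq \frac{M^2L^2 C_{\rm MPL}\,R^\star_N(\pt,\epsilon,j)\,\mathcal V_N}{4\,\mathcal T_N}+\frac{ML}{|\Lambda_j(\pt)|}.
\]
Assumption~\ref{ass:2}, read uniformly over pairs $(y,z)$ on the relevant ball, drives the first term to zero under $\limsup_{\epsilon\downarrow 0}\limsup_{N\to\infty}\sup_\pt$; the second vanishes as $j\to\infty$ since $|\Lambda_j(\pt)|\to\infty$ by hypothesis; and sending $M\to\infty$ completes the proof.

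The only genuinely nontrivial ingredient is the moving particle lemma, which alone converts an exchange between two sites at macroscopic graph distance into a nearest-neighbor Dirichlet-form bound with effective-resistance pre-factor, and Assumption~\ref{ass:2} is tailored so that this pre-factor is dominated by $\mathcal T_N/\mathcal V_N$ in the diffusive scaling. A subtler bookkeeping issue is that the naive Lipschitz bound $|V|\leq L|A_1-A_2|$ destroys the sign structure required for the antisymmetry trick; the remedy is to factor $V=(A_1-A_2)H_\pt$ with $H_\pt$ a Lipschitz function of $(A_1,A_2)$ and to track its $O(|\Lambda_j|^{-1})$ fluctuations under exchanges, leaving only the harmless residual above.
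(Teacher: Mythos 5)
Your argument is correct in substance, but it takes a genuinely different route from the paper's. The paper never works with the full ball average directly at the spectral level: it compares $\avg{\eta}{\Lambda_j(\pt)}$ with the average over a second disjoint block of equal cardinality, proves a spectral estimate for $\Utildetwo{j}{x}{y}$ by localizing the Dirichlet form onto a two-block generator (the two block exclusion processes plus bridging exchanges priced by the moving particle lemma), reduces by irreducibility to the canonical measures $\nu_{*,k}$, and kills the resulting supremum by an equivalence-of-ensembles/hypergeometric variance computation as $j\to\infty$; a separate averaging/partitioning step (Lemma \ref{lem:avg} and Proposition \ref{prop:goodavg}) then transfers the estimate from one comparison block to the ball average $\avg{\eta}{B(\pt,r_{\epsilon N})}$. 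You bypass both the equivalence of ensembles and the partitioning: after the same Chebyshev/Feynman--Kac/Rayleigh--Ritz reduction, you expand the difference of averages into single-pair terms $\eta(y)-\eta(z)$, integrate by parts using the exchange-invariance of $\nu_\alpha$, and price each pair by the moving particle lemma of \cite{ChenMPL}, so that Assumption \ref{ass:2} (read uniformly over pairs in the ball and over $\pt$, a uniformity the paper also uses implicitly, e.g.\@ in its verification on strongly recurrent graphs) removes the resistance term, while $j\to\infty$ is needed only to kill the $O(|\Lambda_j(\pt)|^{-1})$ residual from the fluctuation of the divided-difference factor $H_\pt$ under an exchange. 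Your route is shorter and makes the role of the resistance scaling completely explicit, but it leans on extra structure: the factorization $\Phi_\pt(A_1)-\Phi_\pt(A_2)=(A_1-A_2)H_\pt$ with $H_\pt$ bounded and jointly Lipschitz uniformly in $\pt$ (true since $\Phi_\pt$ is a polynomial, but requiring uniform control of $|B(\pt,r_\phi)|$ and $\|\phi_\pt\|_\infty$, parallel to the paper's uniform $C_\Phi$), the containment $\Lambda_j(\pt)\subset B(\pt,r_{\epsilon N})$ so that the ball average is exchange-invariant (harmless, since $N\to\infty$ is taken first), and, crucially, the exchange-symmetry of the reference measure, so that in the boundary-driven setting of \S\ref{sec:boundary}, where the paper reuses its two-block scheme after the change-of-measure corrections of Lemmas \ref{lem:asymppos}--\ref{lem:timechange}, your antisymmetry step would need analogous (and not entirely cosmetic) modifications. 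The paper's scheme, by contrast, uses only the Lipschitz property of $\Phi_\pt$ and stays within the classical one-block/two-blocks template, which is what makes it transfer directly to the nonreversible case.
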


The one-block estimate, proved in \S\ref{sec:1block}, is fairly standard. In contrast, the proof of the two-blocks estimate in its present form is new. We use a version of the moving particle lemma, which appeared in \cite{ChenMPL}, valid on any finite weighted graph; see Proposition \ref{prop:MPL} below. This lemma then serves as the starting point of a coarse-graining argument which goes through by virtue of the separation of the two scales $j$ and $\epsilon N$. Our proof does not require precise geometric control on the averaging blocks. See \S\ref{sec:2blocks} for details.

\begin{proof}[Proof of Theorem \ref{thm:localrep}, assuming Theorems \ref{thm:1block} and \ref{thm:2block}]
Observe that
\begin{align}
\label{UDecomp} U_{N,\epsilon}(\pt,\eta)= \Uone{j}(\pt,\eta) + \Utwo{N}{\epsilon}{j}(\pt,\eta).
\end{align}

Set the random variables $X^{(1)}_N = \left|\int_0^T \, \Uone{j}(\pt,\eta^N_t)\,dt\right|$ and $X^{(2)}_N = \left|\int_0^T \, \Utwo{N}{\epsilon}{j}(\pt,\eta^N_t)\,dt\right|$. Observe that for any $\delta>2\delta'>0$,
\begin{align*}
\left\{\left|\int_0^T\, U_{N,\epsilon}(\pt, \eta^N_t)\,dt\right|>\delta\right\}  \subset \{X^{(1)}_N+X^{(2)}_N > \delta\} &\subset \{X^{(1)}_N \leq \delta',~X^{(2)}_N \leq \delta'\}^c \\
& = \{X^{(1)}_N>\delta'\} \cup \{X^{(2)}_N>\delta'\}.
\end{align*}
So by the union bound,
\begin{align*}
\mathbb{P}^N_{\eta_0^N}\left[X^{(1)}_N+X^{(2)}_N>\delta\right] \leq \mathbb{P}^N_{\eta_0^N}\left[X^{(1)}_N>\delta'\right] + \mathbb{P}^N_{\eta_0^N}\left[X^{(2)}_N>\delta'\right] =: p^{(1)}_N + p^{(2)}_N.
\end{align*}
Furthermore (see \emph{e.g.\@} \cite{KipnisLandim}*{(A.II.3.2)}), if $(a_N)_N$ is an increasing sequence of real numbers with $a_N \uparrow \infty$, and $(b_N)_N$ and $(d_N)_N$ are two sequences of positive real numbers, then
\begin{equation}
\label{px1x2}
\limsup_{N\to\infty} \frac{1}{a_N} \log(b_N + d_N) \leq \max \left(\limsup_{N\to\infty} \frac{1}{a_N}\log b_N, \limsup_{N\to\infty} \frac{1}{a_N} \log d_N\right).
\end{equation}
Setting $a_N=\mathcal{V}_N$, $b_N=p^{(1)}_N$, and $d_N=p^{(2)}_N$, we deduce that
\begin{align}
\label{qx1x2}
\limsup_{N\to\infty} \frac{1}{\mathcal{V}_N}\log\left(p^{(1)}_N + p^{(2)}_N\right) \leq \max\left(\limsup_{N\to\infty}\frac{1}{\mathcal{V}_N} \log p^{(1)}_N , \limsup_{N\to\infty} \frac{1}{\mathcal{V}_N}\log p^{(2)}_N \right).
\end{align}
Now take the $\epsilon\downarrow 0$ limit followed by the $j\to\infty$ limit on both sides of (\ref{qx1x2}), and apply (\ref{1bp}) and (\ref{2bp}) to deduce (\ref{supexp}).
\end{proof}

\subsection{Local ergodicity in the boundary-driven exclusion process} \label{sec:mainboundary}

To make connections with rigorous study of non-equilibrium statistical mechanics \cites{ABDS13, BodineauLagouge, BDAdditivity, BDGJL03, BDGJL07, BDGJL15}, we also consider the boundary-driven version of the exclusion process on a weighted graph. Informally speaking, we modify the symmetric exclusion process  defined in \S\ref{sec:exclusion} by introducing ``boundary reservoirs'' so that particles can jump from the bulk to the edge and vice versa. See Figure \ref{fig:BD}. By introducing the reservoirs, the stochastic process is generally no longer reversible, and therefore serves as a candidate model for stochastic dynamics out of equilibrium.

\begin{figure}
\centering
\includegraphics[width=0.6\textwidth]{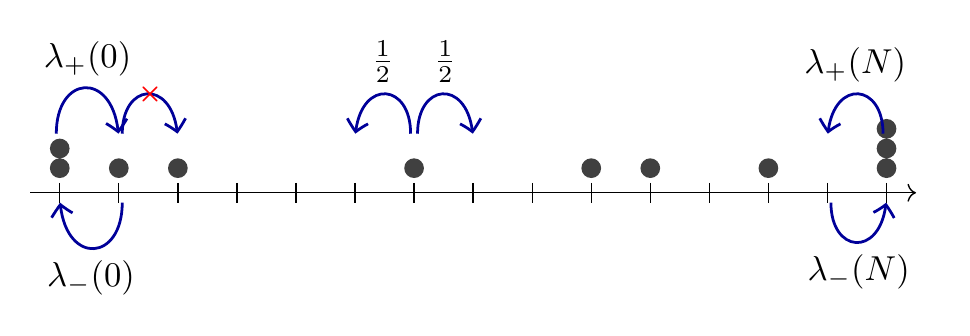}
\caption{The boundary-driven exclusion process on $\{0, 1,\cdots, N\}$, with reservoirs at $0$ and $N$.}
\label{fig:BD}
\end{figure}

Given a connected weighted graph $(G, {\bf c})$, we declare a nonempty subset $\partial V \subset V(G)$ to be the boundary set. For simplicity and without loss of generality, we assume that $c_{aa'}=0$ for all $a,a' \in \partial V$. Let $(\eta_t)_{t\geq 0}$ be a continuous-time Markov chain on $\{0,1\}^{V(G)}$ whose infinitesimal generator is 
\begin{align}
\label{eq:gen}
\mathcal{L}^{\rm bEX}_{(G,{\bf c})} = \mathcal{L}^{\rm EX}_{(G,{\bf c})} + \mathcal{L}^{\rm b}_{\partial V},
\end{align}
where the boundary generator reads
\begin{align}
(\mathcal{L}^b_{\partial V} f)(\eta)  = \sum_{a\in \partial V} [\lambda_-(a) \eta(a) + \lambda_+(a) (1-\eta(a))] [ f(\eta^a)-f(\eta)], \quad f: \{0,1\}^{V(G)} \to\mathbb{R},
\end{align}
with $\lambda_+(a) \in \mathbb{R}_+$ (resp.\@ $\lambda_-(a) \in \mathbb{R}_+$) representing the transition rate from $\eta(a)=0$ to $\eta(a)=1$ (resp.\@ from $\eta(a)=1$ to $\eta(a)=0$) with all other $\eta(y)$ intact, and
\begin{align}
\eta^a(z) = \left\{\begin{array}{ll} 1-\eta(a),& \text{if}~z=a,\\ \eta(z), & \text{otherwise.}\end{array}\right.
\end{align}

We impose the following conditions on the transition rates.

\begin{condition}[E] \label{ass:boundedaway}
$\displaystyle \limsup_{N\to\infty} \frac{|V(\Gamma_N)|}{\mathcal{V}_N} < \infty$.
\end{condition}

\begin{condition}[BR] \label{ass:boundaryrate}
There exist $\gamma,\gamma'\in [1,\infty)$ such that for all $a\in \partial V$, 
$$
\gamma^{-1} \leq \frac{\lambda_+(a)}{\lambda_-(a)} \leq \gamma \quad\text{and}\quad (\gamma')^{-1} \leq \frac{\lambda_+(a)}{c_a} \leq \gamma'.
$$
\end{condition}


As before, let $\{(\Gamma_N,{\bf c})\}_N$ be a sequence of finite weighted graphs exhausting $(\Gamma,{\bf c})$. To each $\Gamma_N$ we associate a boundary set $\partial V_N \subset V(\Gamma_N)$. (In practice one can take $\partial V_N = \partial B(o, r_N)$, though our proofs do not depend crucially on this choice aside from satisfying Assumption \ref{ass:boundaryscaling} below.) Let $\mathcal{L}^{\rm bEX}_{(\Gamma_N, {\bf c})} = \mathcal{L}^{\rm EX}_{(\Gamma_N, {\bf c})} + \mathcal{L}^b_{\partial V_N}$ denote the generator of the boundary-driven symmetric exclusion process on $(\Gamma_N, {\bf c})$ with rates $\{\lambda_\pm(a): a\in \partial V_N\}$. Let $\mu^N_{\rm inv}$ be the unique invariant measure of the process generated by $\mathcal{L}^{\rm bEX}_{(\Gamma_N, {\bf c})}$. Note that $\mu^N_{\rm inv}$ is generally \emph{not} a product measure, but one can compute its one-site marginal $\rho_N(x):= E_{\mu_{\rm inv}^N}[\eta(x)]$, $x\in V(\Gamma_N)$. It is not hard to show that the function $\rho_N$ takes value in $\left[\frac{1}{1+\gamma}, \frac{\gamma}{1+\gamma}\right]$ where $\gamma$ is as in Condition (\nameref{ass:boundaryrate}), and is harmonic (with respect to the graph Laplacian) on $V(\Gamma_N) \setminus \partial V_N$ subject to a Robin boundary condition on $\partial V_N$; see \S\ref{sec:dirichlet} for details.

To prove local ergodicity in the boundary-driven process, we need an additional assumption on the boundary sets and the corresponding rates. This is stated in the language of random walks (or electrical networks) following \emph{e.g.\@} \cite{DoyleSnell} or \cite{LyonsPeres}*{Ch.\@ 2}. Given a function $h: V(G)\to\mathbb{R}$ which is harmonic on $V(G)\setminus \partial V$, the \textbf{(electric) flow} of $h$ out of $a\in \partial V$ is given by
\begin{align}
\label{eq:flow}
i_h(a) = \sum_{y\in V(G)} c_{ay}[h(y)-h(a)].
\end{align}
In the sense of calculus on graphs, $i_h(a)$ can be also seen as the negative of the (discrete) \textbf{normal derivative} of $h$ at $a$.

\begin{assumption}
The sequence of boundary rates $(\{\lambda_\pm(a): a\in \partial V_N\})_N$ is chosen such that
\label{ass:boundaryscaling}
\begin{align}
\label{flowfinite}
  \limsup_{N\to\infty} \frac{\mathcal{T}_N}{\mathcal{V}_N} \sum_{a\in \partial V_N} |i_{\rho_N}(a)| <\infty
 \end{align}
 and
 \begin{align} 
 \label{energyfinite}
 \limsup_{N\to\infty} \frac{\mathcal{T}_N}{\mathcal{V}_N} \mathcal{E}^{\rm el}_{(\Gamma_N,{\bf c})}(\rho_N) < \infty.
\end{align}
\end{assumption}

This assumption says that the sum of the absolute values of the flows of $\rho_N$ on the boundary set, as well as the Dirichlet energy of $\rho_N$, remains finite in the scaling limit. It will be explained in Remark \ref{rem:trace} that
\[
i_{\rho_N}(a) = \sum_{b\in \partial V_N} \hat{c}_{a,b} [\rho_N(b)-\rho_N(a)], \quad a\in \partial V_N,
\]
and
\[
 \mathcal{E}^{\rm el}_{(\Gamma_N,{\bf c})}(\rho_N) = \frac{1}{2} \sum_{a\in \partial V_N} \sum_{b\in\partial V_N} \hat{c}_{a,b} [\rho_N(a)-\rho_N(b)]^2,
\]
where $\{\hat{c}_{a,b}>0:a,b\in \partial V_N\}$ represent the (asymmetric) conductances in the \emph{trace} of the symmetric random walk process on $(\Gamma_N, {\bf c})$ to the boundary $\partial V_N$. Thus \eqref{flowfinite} and \eqref{energyfinite} correspond to finiteness of, respectively, the $L^1$- and $L^2$-gradient norm in the (rescaled) trace process.

Let us note that Assumption \ref{ass:boundaryscaling} is implied by the condition
\[
\limsup_{N\to\infty} \frac{\mathcal{T}_N}{\mathcal{V}_N} \sum_{a\in V_N} \sum_{\substack{b\in V_N\\ b\neq a}} \hat{c}_{a,b} < \infty.
\]
Moreover, Assumption \ref{ass:1} and the finite flow condition \eqref{flowfinite} of Assumption \ref{ass:boundaryscaling} together imply that $\lim_{N\to\infty} \sum_{a\in \partial V_N} |i_{\rho_N}(a)|=0$, whence $\lim_{N\to\infty} \sup_{a \in \partial V_N} |i_{\rho_N}(a)|=0$. In turn, by the boundary condition on $\rho_N$, \emph{viz.\@} the second equation in \eqref{ODErho}, we obtain
\begin{align}
\label{eq:ratecoincide}
\lim_{N\to\infty} \sup_{a\in \partial V_N} \left| \rho_N(a) - \frac{\lambda_+(a)}{\lambda_+(a) + \lambda_-(a)} \right| =0.
\end{align}
This coincides with the original motivation for the boundary-driven process that in the scaling limit, the mean particle density at the boundary point $a\in \partial V$ is determined solely by the ratio of the boundary hopping rates $\lambda_+(a)/\lambda_-(a)$.

\begin{theorem}[Local ergodicity in the boundary-driven exclusion process]
\label{thm:localergodicboundary}
Let $\mathbb{P}^N_{\eta_0^N}$ be the law of the boundary-driven exclusion process $(\eta_t^N)_{t\geq 0}$ with infinitesimal generator $\mathcal{T}_N \mathcal{L}^{\rm bEX}_{(\Gamma_N, {\bf c})}$, started from the initial configuration $\eta_0^N$ on $\{0,1\}^{V(\Gamma_N)}$. Under Conditions (\nameref{ass:boundedaway}) and (\nameref{ass:boundaryrate}), as well as Assumptions \ref{ass:1}, \ref{ass:2}, and \ref{ass:boundaryscaling},
 for each $T>0$ and each $\delta>0$,
\begin{align}
\label{supexpboundary}
\limsup_{\epsilon\downarrow 0} \limsup_{N\to\infty} \sup_\pt \frac{1}{\mathcal{V}_N} \log \mathbb{P}^N_{\eta_0^N} \left\{ \left|\int_0^T\, U_{N,\epsilon}(\pt,\eta^N_t)\,dt\right|>\delta\right\} = -\infty.
\end{align}
Also, for each $T>0$, $\delta>0$, and continuous function $G:[0,T]\to\mathbb{R}$,
\begin{align}
\label{supexpboundary2}
\limsup_{N\to\infty} \sup_{a\in \partial V_N} \frac{1}{\mathcal{V}_N} \log\mathbb{P}^N_{\eta_0^N}\left\{\left|\int_0^T\, G(t)\left[\eta_t^N(a)-\frac{\lambda_+(a)}{\lambda_+(a)+\lambda_-(a)} \right]\,dt \right| >\delta\right\}=-\infty.
\end{align}
\end{theorem}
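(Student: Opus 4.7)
The proofs of both \eqref{supexpboundary} and \eqref{supexpboundary2} will rest on two complementary observations. First, the symmetric part of $\mathcal{L}^{\rm bEX}_{(\Gamma_N,{\bf c})}$ with respect to any product Bernoulli measure $\nu_\alpha$ decomposes as $\mathcal{L}^{\rm EX}_{(\Gamma_N,{\bf c})}$ plus a non-negative contribution from the symmetrized boundary generator, so every upper bound on the Feynman--Kac variational problem $\sup_f\{\nu_\alpha[Vf^2] - \mathcal{D}(f)\}$ used in the proofs of Theorems \ref{thm:1block} and \ref{thm:2block} remains valid for the boundary-driven dynamics. Second, for any deterministic initial configuration $\eta_0^N$ and any event $A$,
\[
\mathbb{P}^N_{\eta_0^N}[A] \leq \nu_\alpha(\eta_0^N)^{-1}\,\widetilde{\mathbb{P}}^N_{\nu_\alpha}[A] \leq \bigl(\min(\alpha, 1-\alpha)\bigr)^{-|V(\Gamma_N)|}\widetilde{\mathbb{P}}^N_{\nu_\alpha}[A],
\]
where $\widetilde{\mathbb{P}}^N_{\nu_\alpha}$ denotes the boundary-driven process started from $\nu_\alpha$; by Condition (\nameref{ass:boundedaway}) the prefactor contributes at most a fixed additive constant on the $\mathcal{V}_N^{-1}\log(\cdot)$ scale, which is absorbed whenever the remainder already decays super-exponentially.

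For \eqref{supexpboundary}, the plan is to verify that the statements and proofs of Theorems \ref{thm:1block} and \ref{thm:2block} extend verbatim when $\mathbb{P}^N_\alpha$ is replaced by $\widetilde{\mathbb{P}}^N_{\nu_\alpha}$. The only inputs are the Dirichlet-form variational bound described above and the moving-particle lemma of Proposition \ref{prop:MPL}, the latter of which lives entirely in the bulk and is thus insensitive to the boundary perturbation. Combining the resulting boundary-driven one- and two-blocks estimates exactly as in the proof of Theorem \ref{thm:localrep} yields the super-exponential bound for $\widetilde{\mathbb{P}}^N_{\nu_\alpha}$, and the Radon--Nikodym step above then promotes it to $\mathbb{P}^N_{\eta_0^N}$.

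For \eqref{supexpboundary2}, the plan is to treat one boundary site $a\in\partial V_N$ at a time, using the reference measure $\nu_*:=\nu_{\hat\alpha(a)}$ with $\hat\alpha(a):=\lambda_+(a)/[\lambda_+(a)+\lambda_-(a)]$, chosen so that the flip dynamics at $a$ is reversible with respect to $\nu_*$. A direct two-state Glauber computation yields the one-site Poincar\'e-type estimate
\[
\bigl|\nu_*[(\eta(a)-\hat\alpha(a))f^2]\bigr| \leq C_a \sqrt{\mathcal{D}^{b}_{\nu_*,a}(f)}\,\sqrt{\nu_*[f^2]},
\]
where $\mathcal{D}^{b}_{\nu_*,a}$ is the symmetric Dirichlet form of the flip at $a$ and $C_a$ is uniformly bounded in $a$ by Condition (\nameref{ass:boundaryrate}). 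Inserting this into Feynman--Kac at test parameter $\gamma\mathcal{V}_N$, and dominating the full symmetric Dirichlet form of $\mathcal{T}_N\mathcal{L}^{\rm bEX}_{(\Gamma_N,{\bf c})}$ from below by $\mathcal{T}_N\mathcal{D}^{b}_{\nu_*,a}$, produces a bound of order $\gamma^2 T\|G\|_\infty^2\,\mathcal{V}_N/\mathcal{T}_N\to 0$ on $\mathcal{V}_N^{-1}\log\mathbb{E}_{\nu_*}\!\bigl[\exp(\gamma\mathcal{V}_N\int_0^T G(t)[\eta^N_t(a)-\hat\alpha(a)]\,dt)\bigr]$, by Assumption \ref{ass:1}. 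The Radon--Nikodym step above, Markov's inequality, and sending $\gamma\to\infty$ then yield \eqref{supexpboundary2}, uniformly in $a$ thanks to the uniformity of $C_a$.

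The main obstacle will be the verification that the proofs of Theorems \ref{thm:1block} and \ref{thm:2block} use the generator only through its symmetric part with respect to $\nu_\alpha$, and in particular that the Dirichlet-form and moving-particle estimates are insensitive to the non-reversible boundary perturbation. Once this is established, the linear-in-$\mathcal{V}_N$ Radon--Nikodym cost is beaten mechanically by the freedom to pick an arbitrarily strong exponential rate $\gamma$ in Markov's inequality, and both super-exponential bounds follow.
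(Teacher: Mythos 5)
There is a genuine gap, and it sits in the very first ``observation'' on which both parts of your plan rest. For a generic constant-density product measure $\nu_\alpha$, the boundary generator $\mathcal{L}^b_{\partial V_N}$ is neither reversible nor even invariant: the flip at $a$ is reversible only for the marginal $\hat\alpha(a)=\lambda_+(a)/(\lambda_+(a)+\lambda_-(a))$, which varies with $a$ and in general differs from $\alpha$. Consequently the quadratic form $\nu_\alpha\bigl[\sqrt f\,(-\mathcal{L}^b_{\partial V_N}\sqrt f)\bigr]$ is \emph{not} nonnegative; a direct computation on a single site shows it equals a nonnegative flip Dirichlet form plus a zeroth-order term of size comparable to $\lambda_+(a)+\lambda_-(a)\asymp c_a$ whose sign is uncontrolled. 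In the Feynman--Kac eigenvalue bound this error enters multiplied by $\mathcal{T}_N/(\kappa\mathcal{V}_N)$, so it contributes a term of order $\frac{\mathcal{T}_N}{\mathcal{V}_N}\sum_{a\in\partial V_N}c_a$, which under Assumption \ref{ass:1} diverges (on $SG$, for instance, it is of order $(5/3)^N$). Nothing in the hypotheses controls this quantity: Assumption \ref{ass:boundaryscaling} only bounds the much smaller quantities $\frac{\mathcal{T}_N}{\mathcal{V}_N}\sum_a|i_{\rho_N}(a)|$ and $\frac{\mathcal{T}_N}{\mathcal{V}_N}\mathcal{E}^{\rm el}_{(\Gamma_N,{\bf c})}(\rho_N)$, and exploiting them requires working with a reference measure adapted to the stationary profile so that the dangerous bulk term cancels by harmonicity. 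This is precisely why the paper does not extend Theorems \ref{thm:1block} and \ref{thm:2block} ``verbatim'' under $\nu_\alpha$, but instead introduces the mock product measure $\nu^N_\lambda$ with marginals $\rho_N$ (Corollary \ref{cor:boundaryrate} bounds them away from $0$ and $1$), quantifies the failure of reversibility of $\nu_\lambda$ for the \emph{bulk} generator via Lemmas \ref{lem:asymppos}, \ref{lem:RDPT}, \ref{lem:timechange} and the boundary moving-particle lemma (Proposition \ref{prop:boundaryMPL}), and only then runs the block estimates, picking up a harmless $c_\delta/\kappa$ correction controlled by Assumption \ref{ass:boundaryscaling}. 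The fact that your argument for \eqref{supexpboundary} never invokes Assumption \ref{ass:boundaryscaling} is symptomatic of this missing mechanism. (Your change-of-measure step from $\widetilde{\mathbb{P}}^N_{\nu}$ to $\mathbb{P}^N_{\eta_0^N}$ using Condition (\nameref{ass:boundedaway}) is fine and matches the paper.)

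The same defect undermines your proof of \eqref{supexpboundary2}. Choosing $\nu_*=\nu_{\hat\alpha(a)}$ makes the flip at the single site $a$ and the bulk exclusion well behaved, but the flips at the other boundary sites $b\neq a$ (whose ratios $\lambda_+(b)/\lambda_-(b)$ need not equal $\lambda_+(a)/\lambda_-(a)$ under Condition (\nameref{ass:boundaryrate})) again contribute zeroth-order terms of order $\mathcal{T}_N$ with uncontrolled sign, so you cannot lower-bound the full symmetrized Dirichlet form by $\mathcal{T}_N\mathcal{D}^b_{\nu_*,a}$ alone. Moreover, at fixed $N$ the natural centering under the stationary profile is $\rho_N(a)$, not $\hat\alpha(a)$; identifying the two in the limit is exactly \eqref{eq:ratecoincide}, which the paper derives from the finite-flow part of Assumption \ref{ass:boundaryscaling} together with Assumption \ref{ass:1} — an ingredient absent from your plan. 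In short, the ``main obstacle'' you flag at the end (that the proofs use the generator only through a nonnegative symmetric part) is not a verification to be carried out but a claim that is false, and repairing it requires essentially the profile-adapted machinery of \S\ref{sec:boundary}.
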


In proving Theorem \ref{thm:localergodicboundary}, we adapt the strategy of \cite{BDGJL03} by introducing a ``mock'' product measure whose marginals are $\rho_N(x)$, the one-site marginals of $\mu_{\rm inv}^N$, and run the one-block and two-blocks estimates with respect to this mock measure. Details are described in \S\ref{sec:boundary}.

The key outstanding question from our work is whether local ergodicity can be established without Assumption \ref{ass:1}, that is, on non-translationally-invariant graphs which support transient random walks (analogs of $\mathbb{Z}^d,~d\geq 3$), or even weakly recurrent random walks (analogs of $\mathbb{Z}^2$). This and other open questions are discussed in \S\ref{sec:open}.

\section{One-block estimate: Proof of Theorem \ref{thm:1block}} \label{sec:1block}

The arguments in this section mirror those given in \cite{Jara}*{\S 5.1} and \cite{JLSLocal}*{\S 6.1}.

Given a subset $\Lambda \subset V(\Gamma)$, we define the generator
\begin{align}
(\mathcal{L}^{\rm EX}_{(\Lambda,{\bf c})} f)(\eta) = \sum_{\substack{xy\in E(\Gamma) \\ x,y \in \Lambda}} c_{xy} (\nabla_{xy} f)(\eta), \quad f: \{0,1\}^{V(\Gamma)} \to\mathbb{R}. 
\end{align}
It is direct to verify that the product Bernoulli measure $\nu_\alpha$ on $\{0,1\}^{V(\Gamma)}$ is a reversible invariant measure for $\mathcal{L}^{\rm EX}_{(\Lambda,{\bf c})}$, whence $\nu_\alpha\left[f \left( -\mathcal{L}^{\rm EX}_{(\Lambda,{\bf c})} f\right)\right] \geq 0$ for all $f\in L^2(\nu_\alpha)$. 

We have the following trivial comparison of the Dirichlet forms.
\begin{lemma}
\label{lem:DFEst1}
Suppose $\Lambda_1 \subset \Lambda_2 \subset \Gamma$. Then for every $f \in L^2(\nu_\alpha^{\Lambda_2})$, 
\begin{align}
\nu_\alpha^{\Lambda_2}\left[ f\left(-\mathcal{L}^{\rm EX}_{(\Lambda_1,{\bf c})}f\right)\right] \leq \nu_\alpha^{\Lambda_2} \left[ f \left( -\mathcal{L}^{\rm EX}_{(\Lambda_2,{\bf c})}f\right)\right].
 \end{align}
\end{lemma}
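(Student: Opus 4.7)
The natural approach is to derive the explicit quadratic-form representation of the Dirichlet form associated with $\mathcal{L}^{\rm EX}_{(\Lambda,{\bf c})}$ and then observe that shrinking the edge set can only decrease a sum of non-negative terms. The plan has two short steps.

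First, I would establish the standard integration-by-parts identity: for any $\Lambda \subset \Lambda' \subset V(\Gamma)$ and any $f \in L^2(\nu_\alpha^{\Lambda'})$,
$$\nu_\alpha^{\Lambda'}\!\left[f\bigl(-\mathcal{L}^{\rm EX}_{(\Lambda,{\bf c})}f\bigr)\right] = \frac{1}{2}\sum_{\substack{xy\in E(\Gamma)\\ x,y\in \Lambda}} c_{xy}\, \nu_\alpha^{\Lambda'}\!\left[(\nabla_{xy}f)^2\right].$$
This follows from the invariance of $\nu_\alpha^{\Lambda'}$ under the swap $\eta\mapsto \eta^{xy}$ for each edge $xy$ with $x,y\in \Lambda\subset \Lambda'$, which holds because $\nu_\alpha^{\Lambda'}$ is product Bernoulli with a common marginal $\alpha$. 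Concretely, the change of variables $\eta\mapsto\eta^{xy}$ gives $\nu_\alpha^{\Lambda'}[f\,(f^{xy}-f)] = \nu_\alpha^{\Lambda'}[f^{xy}(f-f^{xy})]$; averaging the two sides yields $-\nu_\alpha^{\Lambda'}[f\,\nabla_{xy}f] = \frac{1}{2}\nu_\alpha^{\Lambda'}[(\nabla_{xy}f)^2]$, and summing against $c_{xy}$ produces the identity.

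Second, I would apply this identity separately with $\Lambda = \Lambda_1$ and $\Lambda = \Lambda_2$, both against the \emph{same} ambient measure $\nu_\alpha^{\Lambda_2}$ (this is legitimate since $\Lambda_1\subset \Lambda_2$, so $\Lambda_2$ plays the role of $\Lambda'$ in both applications). Comparing the resulting sums, every edge $xy$ with $x,y\in\Lambda_1$ also satisfies $x,y\in\Lambda_2$, so the $\Lambda_1$-sum is indexed over a subset of the edges appearing in the $\Lambda_2$-sum. Since each summand $c_{xy}\,\nu_\alpha^{\Lambda_2}[(\nabla_{xy}f)^2]$ is non-negative (conductances are positive), the asserted inequality follows termwise.

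I do not expect any real obstacle: the only care required is bookkeeping of the edge index sets and verifying that the coordinate-swap identity relies only on the product structure of $\nu_\alpha^{\Lambda_2}$, not on any property specific to $\Lambda_1$ or $\Lambda_2$.
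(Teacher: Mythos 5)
Your proposal is correct and follows essentially the same route as the paper: summation by parts (justified via the product structure of $\nu_\alpha^{\Lambda_2}$ and invariance under the swap $\eta\mapsto\eta^{xy}$) rewrites each Dirichlet form as $\frac{1}{2}\sum c_{xy}\,\nu_\alpha^{\Lambda_2}[(\nabla_{xy}f)^2]$ over the respective edge sets, and the inequality then follows termwise since the $\Lambda_1$-edges form a subset of the $\Lambda_2$-edges and every summand is non-negative. The only difference is that you spell out the change-of-variables argument behind the summation-by-parts identity, which the paper invokes directly via reversibility.
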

\begin{proof}
Since $\nu_\alpha^{\Lambda_2}$ is reversible w.r.t.\@ both $\mathcal{L}^{\rm EX}_{(\Lambda_1,{\bf c})}$ and $\mathcal{L}^{\rm EX}_{(\Lambda_2,{\bf c})}$, summation by parts yields
\begin{align*}
\nu_\alpha^{\Lambda_2}\left[ f\left(-\mathcal{L}^{\rm EX}_{(\Lambda_1,{\bf c})}f\right)\right] &= \frac{1}{2} \sum_{\substack{xy\in E(\Gamma)\\ x,y \in \Lambda_1}} c_{xy} \nu_\alpha^{\Lambda_2}\left[\left(\nabla_{xy} f\right)^2\right] \\ & \leq \frac{1}{2} \sum_{\substack{xy\in E(\Gamma)\\ x,y \in \Lambda_2}} c_{xy} \nu_\alpha^{\Lambda_2}\left[\left(\nabla_{xy} f\right)^2\right] 
=\nu_\alpha^{\Lambda_2} \left[ f \left( -\mathcal{L}^{\rm EX}_{(\Lambda_2,{\bf c})}f\right)\right].
\end{align*}
\end{proof} 

The one-block estimate hinges upon the following spectral estimate.


\begin{lemma}
\label{lem:1beigv}
For each $\kappa>0$, let $\lambda^{(1),\pm}_{N,j}(\kappa)$ be the largest eigenvalue of $\mathcal{T}_N \mathcal{L}^{\rm EX}_{(\Gamma_N, {\bf c})} \pm \kappa \mathcal{V}_N \Uone{j}(\pt,\cdot)$ with respect to the product Bernoulli measure $\nu_\alpha$. Then under Assumption \ref{ass:1}, we have
\begin{align}
\label{1beigv}
\limsup_{j\to\infty} \limsup_{N\to\infty} \frac{1}{\kappa \mathcal{V}_N} \lambda^{(1),\pm}_{N,j}(\kappa) \leq 0.
\end{align}
\end{lemma}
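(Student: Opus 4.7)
My plan starts from the Rayleigh--Ritz variational characterization of the top eigenvalue of the self-adjoint operator $\mathcal{T}_N \mathcal{L}^{\rm EX}_{(\Gamma_N,{\bf c})} \pm \kappa\mathcal{V}_N \Uone{j}(\pt,\cdot)$ on $L^2(\nu_\alpha)$:
\[
\lambda^{(1),\pm}_{N,j}(\kappa) = \sup_{\nu_\alpha[f^2]=1}\Bigl\{\pm\kappa\mathcal{V}_N\,\nu_\alpha[\Uone{j}(\pt,\cdot)\,f^2] \;-\; \mathcal{T}_N\,\nu_\alpha\bigl[f(-\mathcal{L}^{\rm EX}_{(\Gamma_N,{\bf c})}f)\bigr]\Bigr\}.
\]
Since $\Uone{j}(\pt,\cdot)$ is measurable with respect to the coordinates in $\Lambda_j(\pt)$, Lemma \ref{lem:DFEst1} allows me to replace the Dirichlet form above by its restriction to $(\Lambda_j(\pt),{\bf c})$ in the resulting upper bound.

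I would then condition $\nu_\alpha$ on the number $k$ of particles inside $\Lambda_j(\pt)$ in order to decompose it into the canonical measures $\nu^{\Lambda_j(\pt),k}$. On each hyperplane the restricted SEP is a finite irreducible reversible chain, hence obeys a Poincar\'e inequality with a strictly positive (though $j,\pt$-dependent) spectral gap $\gamma_j(\pt)$. Writing $\Uone{j} = \bigl[\Uone{j}-\bar U_{j,k}\bigr] + \bar U_{j,k}$, where $\bar U_{j,k}:=\nu^{\Lambda_j(\pt),k}[\Uone{j}]$, applying Cauchy--Schwarz to the centered piece combined with Poincar\'e for $f$, and then recombining the hyperplane estimates via Jensen's inequality yields the key bound
\[
\bigl|\nu_\alpha[\Uone{j}\,f^2]\bigr| \;\leq\; \Bigl(\sup_k|\bar U_{j,k}|\Bigr)\,\nu_\alpha[f^2] \;+\; C_j(\pt)\sqrt{\nu_\alpha\bigl[f(-\mathcal{L}^{\rm EX}_{(\Lambda_j(\pt),{\bf c})}f)\bigr]}.
\]
The equivalence of ensembles (a hypergeometric-versus-Bernoulli local-limit comparison) then delivers $\sup_k|\bar U_{j,k}|=o_j(1)$ as $j\to\infty$, since $\phi_\pt$ is bounded and depends only on coordinates inside the fixed ball $B(\pt,r_\phi)$ while $|\Lambda_j(\pt)|\to\infty$.

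Plugging this bound into the variational formula, with $\|f\|_{L^2(\nu_\alpha)}=1$, and maximizing the resulting quadratic in $x:=\sqrt{\nu_\alpha[f(-\mathcal{L}^{\rm EX}_{(\Lambda_j(\pt),{\bf c})}f)]}$ produces
\[
\frac{\lambda^{(1),\pm}_{N,j}(\kappa)}{\kappa\mathcal{V}_N} \;\leq\; o_j(1) \;+\; \frac{\kappa\,C_j(\pt)^2\,\mathcal{V}_N}{4\,\mathcal{T}_N}.
\]
Assumption \ref{ass:1} forces $\mathcal{V}_N/\mathcal{T}_N\to 0$, so for fixed $j$ and $\kappa$ the second term vanishes as $N\to\infty$, and $o_j(1)\to 0$ as $j\to\infty$ delivers the claim.

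The main obstacle I anticipate is the equivalence-of-ensembles step $\sup_k|\bar U_{j,k}|=o_j(1)$. This estimate is classical on translation-invariant blocks, but here must be executed on the non-homogeneous sets $\Lambda_j(\pt)$; I need to argue that the rate of convergence depends only on $|\Lambda_j(\pt)|\to\infty$ together with the uniform parameters $\|\phi_\pt\|_\infty$ and $r_\phi$ of the local function bundle, and not on the detailed combinatorics of $\Lambda_j(\pt)$ near $\pt$. A secondary concern, uniformity in $\pt$, is not demanded by the present lemma but becomes relevant when the estimate is subsequently fed into Theorem \ref{thm:1block} through the $\sup_\pt$; a uniform-in-$\pt$ choice of approximating family $\{\Lambda_j(\pt)\}$ together with the standing bounds on $\phi$ should handle that bookkeeping routinely.
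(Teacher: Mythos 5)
Your argument is correct in substance, but it reaches \eqref{1beigv} by a genuinely different mechanism than the paper's. The paper's proof is soft: after the same two reductions you make (the variational characterization and Lemma \ref{lem:DFEst1}), it passes to densities relative to $\nu_\alpha^{\Lambda_j(\pt)}$, interchanges $\limsup_N$ with the supremum by compactness, and uses Assumption \ref{ass:1} only to force the limiting supremum onto densities with \emph{zero} restricted Dirichlet energy; irreducibility on each particle-number hyperplane then makes such densities constant there, so the supremum collapses to the canonical expectations in \eqref{supU1}, which vanish as $j\to\infty$ by equivalence of ensembles (Lemma \ref{lem:ensembles}). You replace the compactness step by a quantitative perturbation bound: Poincar\'e on each hyperplane plus Cauchy--Schwarz gives $|\nu_\alpha[\Uone{j}(\pt,\cdot)f^2]|\le \sup_k|\bar U_{j,k}|+C_j(\pt)\sqrt{D_j(f)}$ with $D_j(f):=\nu_\alpha[f(-\mathcal{L}^{\rm EX}_{(\Lambda_j(\pt),{\bf c})}f)]$, and optimizing the quadratic in $\sqrt{D_j(f)}$ yields the explicit bound $o_j(1)+\kappa C_j(\pt)^2\mathcal{V}_N/(4\mathcal{T}_N)$. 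Both routes funnel into the same equivalence-of-ensembles input ($\bar U_{j,k}=\nu^{\Lambda_j(\pt)}_{*,k}[\phi_\pt]-\Phi_\pt(k|\Lambda_j(\pt)|^{-1})$ is exactly the quantity in \eqref{supU1}), and both take $N\to\infty$ before $j\to\infty$, so the uncontrolled $j$- and $\pt$-dependence of your gap constant is harmless. What your version buys is an explicit rate in $N$ for fixed $j$ and no limit/supremum interchange; what it costs is needing a strictly positive hyperplane spectral gap (available here since $\Lambda_j(\pt)$ is finite and connected), where the paper needs only irreducibility. Your worry about the ensembles step is unfounded at the level of this lemma: $\pt$ is fixed, $\phi_\pt$ is bounded and depends on the fixed finite set $B(\pt,r_\phi)$, so Lemma \ref{lem:ensembles} needs nothing about the geometry of $\Lambda_j(\pt)$ beyond $|\Lambda_j(\pt)|\to\infty$.

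One step must be made explicit, because as written it would fail: in the variational formula $f$ is an arbitrary unit vector of $L^2(\nu_\alpha)$ on the \emph{full} configuration space, while $\mathcal{L}^{\rm EX}_{(\Lambda_j(\pt),{\bf c})}$ does not move the coordinates outside $\Lambda_j(\pt)$, so the hyperplane Poincar\'e inequality gives no control on the part of the variance of $f$ carried by those coordinates (e.g.\ $f=\eta(z)$ with $z\notin\Lambda_j(\pt)$ has zero restricted Dirichlet energy but nonzero conditional variance). Before conditioning on the particle number you must replace the density $f^2$ by its conditional expectation given the coordinates in $\Lambda_j(\pt)$ (enlarged so as to contain $B(\pt,r_\phi)$, as is implicitly assumed when $\phi_\pt$ is to be measurable with respect to the block); this leaves the $\Uone{j}$-term unchanged and does not increase the restricted Dirichlet form because $\nu_\alpha$ is a product measure --- precisely the reduction the paper performs in passing from \eqref{eigvvar1} to \eqref{1bspecineq}. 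With that projection inserted, your centered/Poincar\'e estimate and the final optimization go through and prove the lemma.
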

\begin{proof}
We use the variational characterization
\begin{equation}
\label{eq:eigvvar}
\frac{1}{\kappa \mathcal{V}_N}\lambda^{(1),\pm}_{N,j}(\kappa) = \sup_f \left\{ \nu_\alpha \left[\pm \Uone{j}(\pt,\cdot) f\right] - \frac{\mathcal{T}_N}{\kappa \mathcal{V}_N} \nu_\alpha \left[\sqrt{f} \left(-\mathcal{L}^{\rm EX}_{(\Gamma_N,{\bf c})} \sqrt{f}\right)\right]\right\},
\end{equation} 
where the supremum is taken over all densities $f$ with respect to $\nu_\alpha$.

First note that for each $j\in \mathbb{N}$, there exists $N_j$ such that $B(\pt, r_j) \subset B(o, r_N)$ for all $N\geq N_j$, so it suffices to consider all sufficiently large $N$ in what follows. By Lemma \ref{lem:DFEst1},
\begin{align*}
\nu_\alpha \left[\sqrt{f}\left(  -\mathcal{L}^{\rm EX}_{(\Lambda_j(\pt),{\bf c})} \sqrt{f} \right) \right] \leq \nu_\alpha \left[\sqrt{f} \left( -\mathcal{L}^{\rm EX}_{(\Gamma_N,{\bf c})} \sqrt{f}\right) \right].
\end{align*}
This leads to the upper estimate
\begin{align}
\label{eigvvar1}
\frac{1}{\kappa \mathcal{V}_N}\lambda^{(1),\pm}_{N,j}(\kappa) \leq \sup_f \left\{ \nu_\alpha \left[\pm \Uone{j}(\pt,\cdot) f\right] - \frac{\mathcal{T}_N}{\kappa \mathcal{V}_N} \nu_\alpha \left[\sqrt{f} \left(-\mathcal{L}^{\rm EX}_{(\Lambda_j(\pt),{\bf c})} \sqrt{f}\right)\right]\right\}.
\end{align} 
Observe that the variational functional on the RHS of \eqref{eigvvar1} depends only on $\{\eta(z): z\in B(\pt, r_j)\}$, which is a finite set by the assumption that the graph is locally finite. Since $\nu_\alpha$ is a product measure, it suffices to supremize the variational functional over densities $f$ relative to $\nu_\alpha^{\Lambda_j(\pt)}$:
\begin{align}
\label{1bspecineq}
\frac{1}{\kappa \mathcal{V}_N} \lambda^{(1),\pm}_{N,j}(\kappa) \leq \sup_f \left\{ \nu_\alpha^{\Lambda_j(x)} \left[ \pm\Uone{j}(\pt,\cdot) f\right] - \frac{\mathcal{T}_N}{\kappa \mathcal{V}_N} \nu_\alpha^{\Lambda_j(\pt)} \left[\sqrt{f}\left( -\mathcal{L}^{\rm EX}_{(\Lambda_j(\pt),{\bf c})} \sqrt{f}\right)\right]\right\}.
\end{align}

We now take the limit $N\to\infty$ on both sides of \eqref{1bspecineq}. Since the supremum runs over a compact set, we can interchange the limit and the supremum to find
\begin{align}
\label{1bsup}
\limsup_{N\to\infty} \frac{1}{\kappa \mathcal{V}_N} \lambda^{(1),\pm}_{N,j}(\kappa) & \leq
\sup_f \limsup_{N\to\infty} \left\{ \nu_\alpha^{\Lambda_j(\pt)} \left[ \pm\Uone{j}(\pt,\cdot) f\right] - \frac{\mathcal{T}_N}{\kappa \mathcal{V}_N} \nu_\alpha^{\Lambda_j(\pt)} \left[\sqrt{f}\left( -\mathcal{L}^{\rm EX}_{(\Lambda_j(\pt),{\bf c})} \sqrt{f}\right)\right]\right\} \\
\nonumber & = \sup_{f:~ \nu_\alpha^{\Lambda_j(\pt)}\left[\sqrt{f}\left(-\mathcal{L}^{\rm EX}_{(\Lambda_j(\pt), {\bf c})}\sqrt{f}\right)\right]=0} \left\{\nu_\alpha^{\Lambda_j(\pt)} \left[\pm \Uone{j}(\pt,\cdot) f\right] \right\}.
\end{align}
In going from the first line to the second line, we used Assumption \ref{ass:1} as well as the nonnegativity of the Dirichlet form,
\begin{align*} \nu_\alpha^{\Lambda_j(\pt)} \left[\sqrt{f}\left( -\mathcal{L}^{\rm EX}_{(\Lambda_j(\pt),{\bf c})} \sqrt{f}\right)\right] = \frac{1}{2} \sum_{\substack{zw\in E(\Gamma)\\ z,w\in \Lambda_j(\pt)}} c_{xy} \,\nu_\alpha^{\Lambda_j(\pt)}  \left[\left(\nabla_{zw} \sqrt f\right)^2 \right],
\end{align*}
thanks to the reversibility of $\mathcal{L}^{\rm EX}_{(\Lambda_j(\pt),{\bf c})}$ w.r.t.\@ $\nu_\alpha^{\Lambda_j(\pt)}$.

Recall also that the total particle number is conserved in the SEP. Since $\{0,1\}^{\Lambda_j(\pt)}$ is irreducible w.r.t.\@ $\mathcal{L}^{\rm EX}_{(\Lambda_j(\pt), {\bf c})}$, a probability density $f$ with zero Dirichlet energy,
\begin{align*}
\nu_\alpha^{\Lambda_j(\pt)}\left[\sqrt{f}\left(-\mathcal{L}^{\rm EX}_{(\Lambda_j(\pt), {\bf c})}\sqrt{f}\right)\right]=0,
\end{align*}
is constant on each hyperplane $\mathcal{X}_{\Lambda_j(\pt),k}$ with $k$ total number of particles, where 
\begin{align*}
\mathcal{X}_{\Lambda, k} := \left\{\eta\in \{0,1\}^\Lambda: \sum_{y\in \Lambda} \eta(y)=k\right\}.
\end{align*}
We let $\nu^\Lambda_{*,k} := \nu^\Lambda_{\alpha}\left(~\cdot~|\mathcal{X}_{\Lambda, k}\right)$, noting that this conditional measure is independent of $\alpha$, and that the random variables $\{\eta(z): z\in \Lambda\}$ are exchangeable under $\nu^{\Lambda}_{*,k}$. Thus the RHS of (\ref{1bsup}) equals
\begin{align}
\label{supU1}
\sup_{0\leq k\leq |\Lambda_j(\pt)|} \nu_{*,k}^{\Lambda_j(\pt)} \left[ \pm U^{(1)}_{j}(\pt,\eta)\right] &= \sup_{0\leq k\leq |\Lambda_j(\pt)|} \pm \left(\nu_{*,k}^{\Lambda_j(\pt)}\left[\phi_\pt\right] - \nu_{*,k}^{\Lambda_j(\pt)}\left[\Phi_\pt(\avg{\cdot}{\Lambda_j(\pt)})\right] \right)\\
\nonumber &= \sup_{0\leq k\leq |\Lambda_j(\pt)|} \pm \left(\nu_{*,k}^{\Lambda_j(\pt)}\left[\phi_\pt\right] - \nu_{k|\Lambda_j(\pt)|^{-1}}^{\Lambda_j(\pt)} \left[\phi_\pt\right] \right),
\end{align}
where we used the fact that $\avg{\eta}{\Lambda} = k|\Lambda|^{-1}$ for every $\eta \in \mathcal{X}_{\Lambda,k}$.

It remains to show that the limit of (\ref{supU1}) vanishes as $j \to\infty$, see the lemma below.
\end{proof}

\begin{lemma}[Equivalence of ensembles]
\label{lem:ensembles}
Let $(\Lambda_j)_{j\geq 1}$ be an increasing sequence of subsets in $V(\Gamma)$, with $|\Lambda_j| \to \infty$. Then
\begin{align}
\label{eq:ensembles}
\lim_{j\to\infty} \sup_{0 \leq k\leq |\Lambda_j|} \left(\nu_{*,k}^{\Lambda_j}[g]-\nu_{k|\Lambda_j|^{-1}}^{\Lambda_j}[g] \right)=0
\end{align}
 for every $g \in L^1(\{0,1\}^{V(\Gamma)}, \nu_\alpha)$.
\end{lemma}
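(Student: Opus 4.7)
The plan is to reduce to bounded cylinder functions by density, and then to carry out an explicit comparison between the hypergeometric marginal of $\nu_{*,k}^{\Lambda_j}$ and the binomial marginal of $\nu_{k|\Lambda_j|^{-1}}^{\Lambda_j}$ on a fixed finite coordinate window. For the reduction, given $\epsilon>0$ I would pick a bounded function $g_\epsilon$ depending only on coordinates in some finite $F\subset V(\Gamma)$, with $\nu_\alpha[|g-g_\epsilon|]<\epsilon$. Writing $\nu_{*,k}^{\Lambda_j}[\,\cdot\,]=\nu_\alpha[\,\cdot\,\mid N_{\Lambda_j}=k]$ where $N_{\Lambda_j}(\eta)=\sum_{z\in\Lambda_j}\eta(z)$, the $|g-g_\epsilon|$-contribution is $\nu_\alpha[|g-g_\epsilon|\mathbf{1}_{\{N_{\Lambda_j}=k\}}]/\nu_\alpha(N_{\Lambda_j}=k)$, which can be controlled uniformly in $k$ by a local central limit theorem estimate ($\nu_\alpha(N_{\Lambda_j}=k)\geq c|\Lambda_j|^{-1/2}$ for $k$ in a typical range near $\alpha|\Lambda_j|$) together with a Chernoff-type decay to handle the atypical range; the analogous bound under $\nu_{k|\Lambda_j|^{-1}}^{\Lambda_j}$ is immediate from the product structure.

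Once $g$ is bounded and supported in a finite $F$, take $j$ large enough that $F\subset\Lambda_j$. By exchangeability of the canonical measure and the product structure of $\nu_{k|\Lambda_j|^{-1}}^{\Lambda_j}$, for each $\xi\in\{0,1\}^F$ with $\ell=\sum_{z\in F}\xi(z)$,
\begin{equation*}
\nu_{*,k}^{\Lambda_j}\bigl(\eta|_F=\xi\bigr)=\frac{\binom{|\Lambda_j|-|F|}{k-\ell}}{\binom{|\Lambda_j|}{k}},\qquad \nu_{k|\Lambda_j|^{-1}}^{\Lambda_j}\bigl(\eta|_F=\xi\bigr)=\Bigl(\tfrac{k}{|\Lambda_j|}\Bigr)^{\ell}\Bigl(1-\tfrac{k}{|\Lambda_j|}\Bigr)^{|F|-\ell}.
\end{equation*}
Expanding the first as a product of $|F|$ factors of the form $(k-i)/(|\Lambda_j|-i)$ or $(|\Lambda_j|-k-i')/(|\Lambda_j|-\ell-i')$, comparing term-by-term against the corresponding factor $k/|\Lambda_j|$ or $(|\Lambda_j|-k)/|\Lambda_j|$, and applying the telescoping inequality $\bigl|\prod_i a_i-\prod_i b_i\bigr|\leq\sum_i|a_i-b_i|$ for $a_i,b_i\in[0,1]$ yields
\begin{equation*}
\sup_{0\leq k\leq|\Lambda_j|}\;\sup_{\xi\in\{0,1\}^F}\bigl|\nu_{*,k}^{\Lambda_j}(\eta|_F=\xi)-\nu_{k|\Lambda_j|^{-1}}^{\Lambda_j}(\eta|_F=\xi)\bigr|\leq \frac{C_{|F|}}{|\Lambda_j|}.
\end{equation*}
Summing over $\xi$ weighted by $g_\epsilon(\xi)$ gives $\sup_k\bigl|\nu_{*,k}^{\Lambda_j}[g_\epsilon]-\nu_{k|\Lambda_j|^{-1}}^{\Lambda_j}[g_\epsilon]\bigr|\leq 2^{|F|}\|g_\epsilon\|_\infty C_{|F|}/|\Lambda_j|\to 0$, which upon combining with the $\epsilon$-approximation closes the argument.

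The main obstacle I anticipate is keeping the first step uniform in $k$: the naive bound $|\nu_{*,k}^{\Lambda_j}[g-g_\epsilon]|\leq\nu_\alpha[|g-g_\epsilon|]/\nu_\alpha(N_{\Lambda_j}=k)$ degenerates once $k/|\Lambda_j|$ drifts away from $\alpha$, so the atypical range must be treated separately, either through the Chernoff tail decay of $\nu_\alpha(N_{\Lambda_j}=k)$ or by swapping the reference measure for a sliding $\nu_{k|\Lambda_j|^{-1}}^{\Lambda_j}$ and invoking a de Finetti-style exchangeability argument. For the intended application to $g=\phi_\pt$ in Lemma \ref{lem:1beigv}, where $\phi_\pt$ is already bounded and local, the reduction step is vacuous and only the routine hypergeometric-versus-binomial estimate is needed.
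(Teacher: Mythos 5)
Your central estimate is correct and is essentially a quantitative version of what the paper does: the paper disposes of the lemma in one line by citing convergence in distribution of the hypergeometric law $\nu_{*,k}^{\Lambda_j}$ to the binomial $\nu_{k|\Lambda_j|^{-1}}^{\Lambda_j}$, while you make this explicit by factoring the hypergeometric cylinder probability into $|F|$ ratios, comparing factor by factor with $k/|\Lambda_j|$ or $(|\Lambda_j|-k)/|\Lambda_j|$, and telescoping; this gives the uniform bound $C_{|F|}\|g\|_\infty/|\Lambda_j|$, which is stronger than what the paper records (you should add the small separate check for the degenerate $k$ with $\ell>k$ or $|F|-\ell>|\Lambda_j|-k$, where the hypergeometric probability vanishes but the binomial one is easily seen to be $O_{|F|}(1/|\Lambda_j|)$ as well). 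This covers exactly the case the paper actually uses, since in Lemma \ref{lem:1beigv} the function is $\phi_\pt$, a function of finitely many $\{0,1\}$-valued coordinates, hence bounded and local.

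The genuine gap is in your first (reduction) step, and your own ``main obstacle'' paragraph does not repair it. Bounding $\nu_{*,k}^{\Lambda_j}[|g-g_\epsilon|]$ by $\nu_\alpha[|g-g_\epsilon|\mathbf{1}_{\{N_{\Lambda_j}=k\}}]/\nu_\alpha(N_{\Lambda_j}=k)$ and then invoking the local CLT fails even in the typical range of $k$: there $\nu_\alpha(N_{\Lambda_j}=k)\asymp |\Lambda_j|^{-1/2}$, so the bound is of order $\epsilon\,|\Lambda_j|^{1/2}$, which diverges as $j\to\infty$ for fixed $\epsilon$; and the Chernoff decay in the atypical range controls the probability $\nu_\alpha(N_{\Lambda_j}=k)$, not the conditional expectation of an unbounded integrand, so it does not help either. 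Thus the passage from bounded cylinder functions to general $g\in L^1(\nu_\alpha)$ does not close as written (indeed, uniformity over all $0\le k\le|\Lambda_j|$ for genuinely unbounded $g$ is delicate, and the paper's own one-line proof likewise only covers bounded local functions). Since the lemma is only ever applied to bounded local function bundles, the honest fix is simply to state and prove it for bounded cylinder functions, where your hypergeometric--binomial computation is complete.
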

\begin{proof}
This follows from the convergence in distribution of the hypergeometric to the binomial, $\nu^{\Lambda_j}_{*,k} \overset{d}{\to} \nu^{\Lambda_j}_{k|\Lambda_j|^{-1}}$ as $j\to\infty$.
\end{proof}
 
The time-dependent version of Lemma \ref{lem:1beigv} has $\lambda^{(1),\pm}_{N,j}(\kappa, t)$ being the largest eigenvalue of $\mathcal{T}_N \mathcal{L}^{\rm EX}_{\Gamma_N, c)}\pm \kappa \mathcal{V}_N U_j^{(1)}(\pt, \eta_t^N)$, for $t\in [0,T]$. By the same exact reasoning one shows \eqref{1beigv} with $\lambda^{(1),\pm}_{N,j}(\kappa, t)$ in place of $\lambda^{(1),\pm}_{N,j}(\kappa)$.

We now combine the spectral estimate, the Feynman-Kac formula, and the exponential Chebyshev's inequality to prove Theorem \ref{thm:1block}.

\begin{proof}[Proof of Theorem \ref{thm:1block}]
By the exponential Chebyshev's inequality, for any $\delta>0$ and $\kappa>0$ we have
\begin{align}
\label{ineq:U}
&~\quad\frac{1}{\mathcal{V}_N} \log \mathbb{P}^N_{\alpha}\left[\left|\int_0^T\, \Uone{j}(\pt,\eta^N_t)\,dt\right| >\delta \right] \\ 
\nonumber &\leq \frac{1}{\mathcal{V}_N} \log \left[e^{-\kappa\delta \mathcal{V}_N} \mathbb{E}^N_{\alpha}\left[\exp\left(\kappa \mathcal{V}_N\left|\int_0^T  \, \Uone{j}(\pt,\eta^N_t)\,dt\right| \right)\right]\right]\\
\nonumber &= \frac{1}{\mathcal{V}_N} \log \mathbb{E}^N_{\alpha} \left[\exp\left(\kappa \mathcal{V}_N \left| \int_0^T \, \Uone{j}(\pt,\eta_t^N)\,dt\right|\right)\right] - \kappa\delta.
\end{align}
We claim that the first term on the RHS of \eqref{ineq:U} vanishes in the limit $N\to\infty$ followed by $j\to\infty$. By the Feynman-Kac formula, 
\begin{align}
\label{FKeigv}
\log \mathbb{E}^N_{\alpha}\left[\exp\left(\pm \kappa \mathcal{V}_N \int_0^T\, \Uone{j}(\pt,\eta_t^N)\,dt\right)\right] \leq T \sup_{t\in [0,T]} \lambda^{(1),\pm}_{N,j}(\kappa,t),
\end{align}
where $\mathbb{E}^N_\alpha$ is the expectation corresponding to $\mathbb{P}^N_\alpha$, and $\lambda^{(1),\pm}_{N,j}(\kappa,t)$ is the largest eigenvalue of $\eta_t^N \mapsto \mathcal{T}_N \mathcal{L}^{\rm EX}_{(\Gamma_N,{\bf c})} \pm \kappa \mathcal{V}_N \Uone{j}(\pt,\eta_t^N)$ with respect to $\nu_\alpha$. Using the inequality $e^{|x|} \leq e^x + e^{-x}$, (\ref{FKeigv}), (\ref{px1x2}) and Lemma \ref{lem:1beigv}, we get
\begin{align*}
& ~\quad\limsup_{j\to\infty} \limsup_{N\to\infty} \frac{1}{\mathcal{V}_N}  \log \mathbb{E}^N_{\alpha} \left[\exp\left(\kappa \mathcal{V}_N \left| \int_0^T \, \Uone{j}(\pt,\eta_t^N)\,dt\right|\right)\right]\\
\nonumber & \leq  \kappa \limsup_{j\to\infty} \limsup_{N\to\infty}\frac{1}{\kappa \mathcal{V}_N}\left(\int_0^T \, \lambda^{(1),+}_{N,j}(\kappa,t) \,dt  +  \int_0^T \, \lambda^{(1),-}_{N,j}(\kappa,t) \,dt \right)  \\
\nonumber & \leq  \kappa \limsup_{j\to\infty} \limsup_{N\to\infty}\max\left(\frac{1}{\kappa  \mathcal{V}_N}\int_0^T \, \lambda^{(1),+}_{N,j}(\kappa,t) \,dt, \,  \frac{1}{\kappa  \mathcal{V}_N}  \int_0^T \, \lambda^{(1),-}_{N,j}(\kappa,t) \,dt \right)\\
\nonumber & =  \kappa \max\left(\limsup_{j\to\infty} \limsup_{N\to\infty}\frac{1}{\kappa  \mathcal{V}_N}\int_0^T \, \lambda^{(1),+}_{N,j}(\kappa,t) \,dt, \, \limsup_{j\to\infty} \limsup_{N\to\infty} \frac{1}{\kappa  \mathcal{V}_N}  \int_0^T \, \lambda^{(1),-}_{N,j}(\kappa,t) \,dt \right) ~\leq~ 0.
\end{align*}
Infer that
\begin{align*}
\limsup_{j\to\infty} \limsup_{N\to\infty} \frac{1}{\mathcal{V}_N} \log \mathbb{P}^N_{\alpha}\left[\left|\int_0^T\, \Uone{j}(\pt,\eta^N_t)\,dt\right| >\delta \right] \leq -\kappa \delta.
\end{align*}
Sending $\kappa\to\infty$ yields (\ref{1bp}).
\end{proof}

\section{Two-blocks estimate: Proof of Theorem \ref{thm:2block}} \label{sec:2blocks}

Let $N \in \mathbb{N}$, $\epsilon\in [0,1]$, and $x\in B(o,r_N)$. Let $\{\Lambda_j(x)\}_{j \geq 1}$ be a sequence of finite, connected subsets of $V(\Gamma)$ containing $x$, with $\Lambda_j(x) \subset B(x, r_j)$ and $|\Lambda_j(x)|\leq |\Lambda_{j+1}(x)|$ for each $j\geq 1$. For each $j\geq 1$, take any subset $\Lambda_j(y)$ of $B(x, r_{\epsilon N}) \setminus \Lambda_j(x)$ with $|\Lambda_j(y)| = |\Lambda_j(x)|$. (Here $y$ indexes an arbitrary vertex in $\Lambda_j(y)$.) From these we define
\begin{align}
\label{Utildetwo}
\Utildetwo{j}{x}{y}(\eta) := \avg{\eta}{\Lambda_j(x)} - \avg{\eta}{\Lambda_j(y)}, \quad \eta\in \{0,1\}^{V(\Gamma)}.
\end{align}
The ``two blocks'' in our setting refer to $\Lambda_j(x)$ and $\Lambda_j(y)$, which are disjoint by construction. In what follows we use the shorthand $\Lambda^{(2)}(j,x,y) = \Lambda_j(x) \cup \Lambda_j(y)$. Also, since we will take the limit $N\to\infty$ before all other limits, it suffices to consider all $N\geq N_j$ for an appropriate cutoff $N_j \in \mathbb{N}$.

\begin{remark}
For the purpose of proving the one-block estimate, we took $\Lambda_j(x)$ to be connected. However, we do not insist that $\Lambda_j(y)$ be connected. In the proof to follow, we will identify a nearly exact cover of $B(x, r_{\epsilon N}) \setminus \Lambda_j(x)$ by $\{\Lambda_j(y): y \in B(x,r_{\epsilon N})\setminus \Lambda_j(x)\}$ without insisting on the $\Lambda_j(y)$ to be connected.
\end{remark}

\subsection{Spectral estimate}

Our first task is to establish a spectral estimate involving $\Utildetwo{j}{x}{y}(\eta)$, analogous to Lemma \ref{lem:1beigv}. A notable difference here is that we need to capture the energy of transporting a particle from $\Lambda_j(x)$ to $\Lambda_j(y)$ via the so-called \emph{moving particle lemma}. This was proved for any finite weighted graph in \cite{ChenMPL}, based on the octopus inequality of \cite{CLR09}.

\begin{proposition}[Moving particle lemma \cite{ChenMPL}*{Theorem 1}]
\label{prop:MPL}
Let $(G,{\bf c})$ be a finite weighted graph. For every $\alpha\in [0,1]$, $x,y \in V(G)$, and $f: \{0,1\}^{V(G)} \to\mathbb{R}$, 
\begin{align}
\label{ineq:dirichletex}
\nu_{\alpha}[f(-\nabla_{xy} f)] \leq R^{(G,{\bf c})}_{\rm eff}(x,y) \, \nu_\alpha \left[f\left(-\mathcal{L}^{\rm EX}_{(G,{\bf c})} f\right) \right],
\end{align}
where
\[
R^{(G,{\bf c})}_{\rm eff}(x,y) = \left(\inf\left\{\mathcal{E}^{\rm el}_{(G,{\bf c})}(f) ~\bigg|~ f:V(G)\to\mathbb{R},~ f(x)=1,~f(y)=0\right\}\right)^{-1}
\]
 is the effective resistance between $x$ and $y$ in $(G, {\bf c})$.
\end{proposition}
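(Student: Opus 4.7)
The plan is to reduce both sides of \eqref{ineq:dirichletex} to squared-gradient form and then deploy the electrical network characterization of $R_{\rm eff}(x,y)$. First, since $\nu_\alpha$ is product Bernoulli, it is invariant under each swap $\eta\mapsto\eta^{uv}$. Hence polarization gives
\begin{equation*}
\nu_\alpha\bigl[f(-\nabla_{xy} f)\bigr] = \tfrac{1}{2}\, \nu_\alpha\bigl[(\nabla_{xy} f)^2\bigr], \qquad \nu_\alpha\bigl[f(-\mathcal{L}^{\rm EX}_{(G,{\bf c})} f)\bigr] = \tfrac{1}{2}\sum_{uv\in E(G)} c_{uv}\, \nu_\alpha\bigl[(\nabla_{uv} f)^2\bigr].
\end{equation*}
Thus \eqref{ineq:dirichletex} is equivalent to the squared-gradient inequality
\begin{equation*}
\nu_\alpha\bigl[(\nabla_{xy} f)^2\bigr] \;\leq\; R_{\rm eff}^{(G,{\bf c})}(x,y) \sum_{uv\in E(G)} c_{uv}\, \nu_\alpha\bigl[(\nabla_{uv} f)^2\bigr].
\end{equation*}

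Next I would prove the key single-path estimate: for any self-avoiding path $\pi=(x=x_0,x_1,\ldots,x_k=y)$ in $G$,
\begin{equation*}
\nu_\alpha\bigl[(\nabla_{xy} f)^2\bigr] \;\leq\; \left(\sum_{i=1}^k \frac{1}{c_{x_{i-1}x_i}}\right) \sum_{i=1}^k c_{x_{i-1}x_i}\, \nu_\alpha\bigl[(\nabla_{x_{i-1}x_i} f)^2\bigr].
\end{equation*}
The natural first attempt, telescoping $\nabla_{x_0 x_k}$ along the path, fails outright because swap operators do not compose additively; the configuration obtained by swapping successively along $\pi$ cyclically permutes the occupation values along $\pi$, not just at the endpoints. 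This is where the octopus inequality of Caputo--Liggett--Richthammer enters. Applied at each interior vertex $x_i$ of the path, it allows one to ``absorb'' the unwanted permutations against the edge Dirichlet forms $c_{x_{i-1}x_i}\nu_\alpha[(\nabla_{x_{i-1}x_i}f)^2]$ at a controlled cost. Iterating the octopus bound one vertex at a time along $\pi$, then applying Cauchy--Schwarz weighted by the resistances $r_{x_{i-1}x_i} = c_{x_{i-1}x_i}^{-1}$, yields the displayed path inequality.

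Finally I would lift the path estimate to arbitrary unit flows. Given any unit flow $\theta$ from $x$ to $y$, decompose $\theta$ as a nonnegative combination $\theta = \sum_\pi w_\pi \mathbf{1}_\pi$ of self-avoiding path flows with $\sum_\pi w_\pi = 1$ (a standard flow decomposition on finite networks). Applying the path bound to each $\pi$, summing with the weights $w_\pi$, and using Cauchy--Schwarz in the form $\bigl(\sum_\pi w_\pi \mathbf{1}_\pi(e)\bigr)^2 \leq \theta_e^2$ on each edge, produces
\begin{equation*}
\nu_\alpha\bigl[(\nabla_{xy} f)^2\bigr] \;\leq\; \left(\sum_{e\in E(G)} \frac{\theta_e^2}{c_e}\right) \sum_{uv\in E(G)} c_{uv}\, \nu_\alpha\bigl[(\nabla_{uv} f)^2\bigr].
\end{equation*}
Infimizing over unit flows $\theta$ from $x$ to $y$ and invoking Thomson's principle, $R_{\rm eff}^{(G,{\bf c})}(x,y) = \inf_\theta \sum_e \theta_e^2/c_e$, completes the proof.

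The main obstacle is the second step: because $\nabla_{xy}$ is a swap rather than a linear difference, naive telescoping is unavailable, and the path bound requires the full strength of the octopus inequality to neutralize the cyclic shifts produced along $\pi$. Once that path bound is in hand, the extension via flow decomposition and the Thomson principle is standard electrical-network technology.
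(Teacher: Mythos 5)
The paper does not actually prove this proposition: it is quoted from \cite{ChenMPL}, where the argument applies the octopus inequality of \cite{CLR09} as a star-mesh (vertex elimination) transformation at \emph{every} vertex of $G$ other than $x,y$, reducing the whole network to a single edge of conductance $R_{\rm eff}^{(G,{\bf c})}(x,y)^{-1}$ while only decreasing the exclusion Dirichlet form. Measured against that, your steps 1 and 2 are essentially fine: the polarization identities are correct, and your constant-$1$ path inequality is exactly the degree-two (series-law) instance of that star-mesh reduction, applied along the path after discarding off-path edges, so its appeal to the octopus inequality can be made rigorous.

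The genuine gap is step 3. Writing $R_\pi=\sum_{e\in\pi}c_e^{-1}$ and $D_\pi=\sum_{e\in\pi}c_e\,\nu_\alpha\bigl[(\nabla_e f)^2\bigr]$, the path bounds all have the \emph{same} left-hand side, so summing them against the weights $w_\pi$ of a flow decomposition can only give
\begin{align*}
\nu_\alpha\bigl[(\nabla_{xy}f)^2\bigr]\;\le\;\sum_\pi w_\pi R_\pi D_\pi\;\le\;\Bigl(\sum_{e\in E(G)} c_e^{-1}\,|\theta_e|\Bigr)\sum_{e\in E(G)} c_e\,\nu_\alpha\bigl[(\nabla_{e}f)^2\bigr],
\end{align*}
using $D_\pi\le\sum_e c_e\nu_\alpha[(\nabla_e f)^2]$ and $\sum_{\pi\ni e}w_\pi=|\theta_e|$. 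This is an $L^1$ (congestion-type) constant, not the Thomson energy: the pointwise inequality $\bigl(\sum_\pi w_\pi\mathbf{1}_\pi(e)\bigr)^2\le\theta_e^2$ has no mechanism for squaring the weights inside the averaged estimate, and the intermediate bound you would need, $\sum_\pi w_\pi R_\pi D_\pi\le\bigl(\sum_e c_e^{-1}\theta_e^2\bigr)\sum_e c_e\nu_\alpha[(\nabla_e f)^2]$, is false in general (take two disjoint paths, assign a small weight to the high-resistance one and concentrate all the Dirichlet mass on it). Since $\inf_\theta\sum_e c_e^{-1}|\theta_e|$ over unit flows is attained at a single path, your argument as written only yields the lemma with the minimal path resistance $\min_\pi R_\pi$ in place of $R_{\rm eff}^{(G,{\bf c})}(x,y)$, which is strictly weaker whenever parallel routes genuinely lower the effective resistance (already for two disjoint two-edge paths). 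Capturing the parallel law at the level of exclusion Dirichlet forms is precisely the hard content of the lemma; in \cite{ChenMPL} it comes from using the full octopus inequality at every vertex, not only series reductions along one chosen path, and any repair of your route would have to show directly that the family of path bounds implies the flow-energy bound — something your Cauchy--Schwarz step does not accomplish.
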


Here is the key spectral estimate.

\begin{lemma}
\label{lem:2beigv}
For each $\kappa>0$, let $\lambda^{(2),\pm}_{N,j,x,y}(\kappa)$ be the largest eigenvalue of $\mathcal{T}_N \mathcal{L}^{\rm EX}_{(\Gamma_N,{\bf c})} \pm \kappa \mathcal{V}_N \Utildetwo{j}{x}{y}(\cdot)$ with respect to $\nu_\alpha$. Then under Assumption \ref{ass:2}, we have
\begin{align}
\label{2beigv}
\limsup_{j\to\infty} \limsup_{\epsilon\downarrow 0} \limsup_{N\to\infty} \frac{1}{\kappa \mathcal{V}_N} \lambda^{(2),\pm}_{N,j,x,y}(\kappa) \leq 0.
\end{align}
\end{lemma}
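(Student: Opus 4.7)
The plan is to parallel the argument for Lemma \ref{lem:1beigv}. By the variational characterization of the top eigenvalue,
\begin{align*}
\frac{1}{\kappa \mathcal{V}_N}\lambda^{(2),\pm}_{N,j,x,y}(\kappa) = \sup_f \left\{\nu_\alpha\bigl[\pm \Utildetwo{j}{x}{y} f\bigr] - \frac{\mathcal{T}_N}{\kappa\mathcal{V}_N} D(\sqrt{f})\right\},
\end{align*}
where $D(g) := \nu_\alpha[g(-\mathcal{L}^{\rm EX}_{(\Gamma_N,{\bf c})}g)]$ and $f$ ranges over $\nu_\alpha$-densities. Expanding
\begin{align*}
\Utildetwo{j}{x}{y}(\eta) = \frac{1}{|\Lambda_j|^2}\sum_{z \in \Lambda_j(x)}\sum_{w \in \Lambda_j(y)}[\eta(z) - \eta(w)]
\end{align*}
reduces the task to controlling $\nu_\alpha[(\eta(z) - \eta(w)) f]$ for each pair $(z,w)$.

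For a single pair, invariance of $\nu_\alpha$ under the swap $\eta \mapsto \eta^{zw}$ yields
\begin{align*}
\nu_\alpha[(\eta(z) - \eta(w))f] = -\tfrac{1}{2}\nu_\alpha[(\eta(z) - \eta(w))(\nabla_{zw} f)].
\end{align*}
Setting $g = \sqrt{f}$ and factoring $\nabla_{zw} f = (g(\eta^{zw}) + g)(\nabla_{zw} g)$, Cauchy--Schwarz followed by a Young splitting with parameter $A > 0$ gives
\begin{align*}
|\nu_\alpha[(\eta(z) - \eta(w)) f]| \leq \frac{1}{A} + \frac{A}{4}\nu_\alpha[(\nabla_{zw} g)^2],
\end{align*}
where the $A^{-1}$ constant comes from $|\eta(z) - \eta(w)| \leq 1$ together with $\nu_\alpha[(g(\eta^{zw}) + g)^2] \leq 4\nu_\alpha[f] = 4$. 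The moving particle lemma (Proposition \ref{prop:MPL}) applied to $g$, combined with the reversibility identity $\nu_\alpha[(\nabla_{zw} g)^2] = 2\nu_\alpha[g(-\nabla_{zw}g)]$, produces $\nu_\alpha[(\nabla_{zw}g)^2] \leq 2R_{\rm eff}^{(\Gamma_N,{\bf c})}(z,w) D(\sqrt{f})$, hence
\begin{align*}
|\nu_\alpha[(\eta(z) - \eta(w)) f]| \leq \frac{1}{A} + \frac{A}{2}R_{\rm eff}^{(\Gamma_N,{\bf c})}(z,w) D(\sqrt{f}).
\end{align*}

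Averaging over $(z,w) \in \Lambda_j(x) \times \Lambda_j(y)$ and using $|\Lambda_j|^{-2}\sum_{z,w} R_{\rm eff}^{(\Gamma_N,{\bf c})}(z,w) \leq \rho_{N,\epsilon} := \sup_{z,w \in B(x, r_{\epsilon N})} R_{\rm eff}^{(\Gamma_N,{\bf c})}(z,w)$ (valid once $N$ is large enough that $\Lambda_j(x) \cup \Lambda_j(y) \subset B(x, r_{\epsilon N})$), I would choose $A = 2\mathcal{T}_N /(\kappa \mathcal{V}_N \rho_{N,\epsilon})$ so that the $D(\sqrt{f})$ contribution cancels against the $\mathcal{T}_N/(\kappa \mathcal{V}_N)$ penalty in the variational expression. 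This leaves the $j$-free bound
\begin{align*}
\frac{1}{\kappa \mathcal{V}_N} \lambda^{(2),\pm}_{N,j,x,y}(\kappa) \leq \frac{\kappa \mathcal{V}_N \rho_{N,\epsilon}}{2\mathcal{T}_N},
\end{align*}
and Assumption \ref{ass:2} (applied uniformly over the finite set of pairs in $B(x, r_{\epsilon N})$) sends the right-hand side to $0$ as $N\to\infty$ followed by $\epsilon \downarrow 0$; the outer $j\to\infty$ limit is then vacuous. The main obstacle is the control of the per-pair expectation: $\eta(z) - \eta(w)$ is not a local gradient of the exclusion generator, so one cannot feed it straight into a Dirichlet-form estimate. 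Proposition \ref{prop:MPL} is exactly what bridges this gap, paying the effective resistance $R_{\rm eff}^{(\Gamma_N,{\bf c})}(z,w)$ to transport a particle from $z$ to $w$, and Assumption \ref{ass:2} guarantees this cost is absorbed at the diffusive scaling.
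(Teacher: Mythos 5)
Your proposal is correct, but it takes a genuinely different route from the paper. You exploit the fact that $\Utildetwo{j}{x}{y}$ is linear in $\eta$: writing it as an average of $\eta(z)-\eta(w)$ over pairs $(z,w)\in\Lambda_j(x)\times\Lambda_j(y)$, integrating by parts via the swap-invariance of $\nu_\alpha$, and feeding each pair directly into the moving particle lemma (Proposition \ref{prop:MPL}); optimizing the Young parameter then cancels the Dirichlet form inside the Rayleigh quotient and leaves the explicit, $j$-independent bound $\kappa\mathcal{V}_N\rho_{N,\epsilon}/(2\mathcal{T}_N)$. The paper instead localizes the variational problem to $\Lambda^{(2)}(j,x,y)=\Lambda_j(x)\cup\Lambda_j(y)$, builds an auxiliary irreducible generator $\mathcal{L}^{(2)}_{j,x,y}$ by adding long-range exchanges $\nabla_{z_iz_{i+1}}$ whose cost is controlled by Proposition \ref{prop:MPL} together with Lemma \ref{lem:DFEst1}, sends $N\to\infty$ and $\epsilon\downarrow 0$ to reduce to densities with zero Dirichlet energy (hence constant on hyperplanes of fixed particle number), and only then invokes the hypergeometric variance estimate (Lemma \ref{lem:U2vanish}) so that the canonical expectation of the difference of block averages vanishes as $j\to\infty$. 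Your argument is more elementary and quantitative---no equivalence-of-ensembles input, an explicit rate, and the outer $j$-limit genuinely vacuous---but it hinges on the specific linear observable; the paper's canonical-measure scheme uses nothing about $\Utildetwo{j}{x}{y}$ beyond its dependence on the two blocks, and it is the template that is reused in \S\ref{sec:boundary} for the nonreversible boundary-driven process. One caveat: your last step needs Assumption \ref{ass:2} in the uniform form $\liminf_{\epsilon\downarrow0}\liminf_{N\to\infty}\frac{\mathcal{T}_N}{\mathcal{V}_N}\rho_{N,\epsilon}^{-1}=\infty$ with $\rho_{N,\epsilon}=\sup_{z,w\in B(x,r_{\epsilon N})}R_{\rm eff}^{(\Gamma_N,{\bf c})}(z,w)$; since $B(x,r_{\epsilon N})$ grows with $N$, this is not literally a ``finite set'' consequence of the pointwise statement, but it is exactly how the assumption is verified in \S\ref{sec:examples} (via the resistance diameter of the ball) and is also what the paper's own proof effectively uses, so it is an issue of interpretation rather than a substantive gap.
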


\begin{proof}
We start with the variational characterization
\begin{equation}
\frac{1}{\kappa \mathcal{V}_N}\lambda^{(2),\pm}_{N,j,x,y}(\kappa)=\sup_f \left\{\nu_\alpha \left[\pm\Utildetwo{j}{x}{y}(\cdot) f\right] -  \frac{\mathcal{T}_N}{\kappa \mathcal{V}_N} \nu_\alpha \left[\sqrt{f}\left( -\mathcal{L}^{\rm EX}_{(\Gamma_N,{\bf c})} \sqrt{f}\right) \right] \right\},
\end{equation}
where the supremum is taken over all densities $f$ relative to $\nu_\alpha$. Below we focus on the ``$+$'' case; the ``$-$'' case is proved in the same way.

Observe that $\Utildetwo{j}{x}{y}(\eta)$ depends only on $\{\eta(z): z\in \Lambda^{(2)}(j,x,y)\}$. Following the proof of Lemma \ref{lem:1beigv}, one would like to replace $\mathcal{L}^{\rm EX}_{(\Gamma_N,{\bf c})}$ in the variational functional by a generator which effectively acts on $\{0,1\}^{\Lambda^{(2)}(j,x,y)}$. This generator is of the form
\begin{align}
\mathcal{L}^{(2)}_{j,x,y} := \mathcal{L}^{\rm EX}_{(\Lambda_j(x), {\bf c})} + \mathcal{L}^{\rm EX}_{(\Lambda_j(y), {\bf c})} +\sum_{i=0}^{B-1} \nabla_{z_i z_{i+1}},
\end{align}
where $z_0 \in \Lambda_j(x)$, $B$ is the number of connected components of $\Lambda_j(y)$, and for each $i\in \{1,2,\cdots, B\}$, $z_i$ is a vertex belonging to the $i$th connected component of $\Lambda_j(y)$. (Our result is not sensitive to the particular choices of the $z_i$. Also note that $B\leq |\Lambda_j(y)| = |\Lambda_j(x)|$.) Observe that the first two terms in the generator $\mathcal{L}^{(2)}_{j,x,y}$ generates the exclusion processes in the individual blocks. In order to make the state space irreducible w.r.t.\@ the Markov chain to ensure ergodicity, we need to add the third term to facilitate particle jumps between the connected components of $\Lambda^{(2)}(j,x,y)$. 

By Lemma \ref{lem:DFEst1} we have
\begin{align*}
\nu_\alpha \left[\sqrt{f}\left( -\left(\mathcal{L}^{\rm EX}_{(\Lambda_j(x),{\bf c})} + \mathcal{L}^{\rm EX}_{(\Lambda_j(y), {\bf c})}\right)\sqrt{f}\right)\right] \leq  \nu_\alpha \left[ \sqrt{f}\left(-\mathcal{L}^{\rm EX}_{(\Gamma_N,{\bf c})} \sqrt{f} \right)\right].
\end{align*}
Meanwhile Proposition \ref{prop:MPL} (the moving particle lemma) yields
\begin{align}
\label{mpl2b1}
\nu_\alpha \left[ \sqrt{f}\left( -\nabla_{z_i z_{i+1}}\sqrt{f}\right)\right] \leq R^{(\Gamma_N,{\bf c})}_{\rm eff}(z_i,z_{i+1})\, \nu_\alpha \left[ \sqrt{f}\left( -\mathcal{L}^{\rm EX}_{(\Gamma_N,{\bf c})} \sqrt{f}\right)\right].
\end{align}
Combining the two inequalities together we get
\begin{align}
\label{ineqmoving}
\nu_\alpha \left[\sqrt{f}\left(-\mathcal{L}^{(2)}_{j,x,y}\sqrt{f}\right)\right] \leq \left(1+ \sum_{i=0}^{B-1} R_{\rm eff}^{(\Gamma_N, {\bf c})}(z_i, z_{i+1})\right) \nu_\alpha \left[ \sqrt{f} \left( -\mathcal{L}^{\rm EX}_{(\Gamma_N,{\bf c})}\sqrt{f}\right)\right].
\end{align}

Based on the inequality (\ref{ineqmoving}), the fact that $\Utildetwo{j}{x}{y}$ depends only on $\{\eta(z): z \in \Lambda^{(2)}(j,x,y)\}$, and that $\nu_\alpha$ is a product measure, it is enough to show that
\begin{align}
\label{2bsup}
\limsup_{j\to\infty} & \limsup_{\epsilon \downarrow 0} \limsup_{N\to\infty} \sup_f \left\{ \nu_\alpha^{\Lambda^{(2)}(j,x,y)}\left[ \Utildetwo{j}{x}{y}(\cdot) f\right] \right. \\ 
\nonumber & - \left.\frac{\mathcal{T}_N}{\kappa \mathcal{V}_N}\left(1+\sum_{i=0}^{B-1} R_{\rm eff}^{(\Gamma_N, {\bf c})}(z_i,z_{i+1})\right)^{-1} \nu_\alpha^{\Lambda^{(2)}(j,x,y)} \left[\sqrt{f}\left( -\mathcal{L}^{(2)}_{j,x,y} \sqrt{f}\right)\right]\right\}\leq 0,
\end{align}
where the supremum is taken over all densities with respect to $\nu_\alpha^{\Lambda^{(2)}(j,x,y)}$. We note that $\mathcal{L}^{(2)}_{j,x,y}$ is reversible w.r.t\@ $\nu_\alpha^{\Lambda^{(2)}(j,x,y)}$, which means that the Dirichlet form $ \nu_\alpha^{\Lambda^{(2)}(j,x,y)} \left[\sqrt{f}\left( -\mathcal{L}^{(2)}_{j,x,y} \sqrt{f}\right)\right]$ is nonnegative.

Since the supremum is over a compact set, we can interchange the limit and the supremum, and obtain
\begin{align}
\label{2bsup3}
& \quad\limsup_{\epsilon\downarrow 0}\limsup_{N\to\infty} \sup_f \bigg\{ \nu_\alpha^{\Lambda^{(2)}(j,x,y)}\left[ \Utildetwo{j}{x}{y}(\cdot) f\right] \\ 
\nonumber & \qquad \quad - \frac{\mathcal{T}_N}{\kappa \mathcal{V}_N}\left(1+\sum_{i=0}^{B-1} R_{\rm eff}^{(\Gamma_N, {\bf c})}(z_i,z_{i+1})\right)^{-1} \nu_\alpha^{\Lambda^{(2)}(j,x,y)} \left[\sqrt{f}\left( -\mathcal{L}^{(2)}_{j,x,y} \sqrt{f}\right)\right]\bigg\} \\
\nonumber &= \sup_{f:~ \nu_\alpha^{\Lambda^{(2)}(j,x,y)} \left[\sqrt{f}\left( -\mathcal{L}^{(2)}_{j,x,y} \sqrt{f}\right)\right]=0} \left\{ \nu_\alpha^{\Lambda^{(2)}(j,x,y)}\left[ \Utildetwo{j}{x}{y}(\cdot) f\right]  \right\}.
\end{align}
In establishing the equality in \eqref{2bsup3}, we used the nonnegativity of the Dirichlet form, as well as the asymptotic result
\begin{align}
\label{2basymp}
\liminf_{\epsilon \downarrow 0} \liminf_{N\to\infty}\frac{\mathcal{T}_N}{\mathcal{V}_N}\left(1+\sum_{i=0}^{B-1} R_{\rm eff}^{(\Gamma_N, {\bf c})}(z_i,z_{i+1})\right)^{-1}=\infty,
\end{align}
which we justify now. 

Let
\begin{align*}
R^{(\Gamma_N,{\bf c})}_{\max} := \max_{i\in \{0,1,\cdots, B-1\}} R_{\rm eff}^{(\Gamma_N, {\bf c})}(z_i, z_{i+1}),
\end{align*}
which is independent of $N$ and $\epsilon$, since $B\leq |\Lambda_j(x)|<\infty$ is independent of $N$ and $\epsilon$.
Now
\[
1+ \sum_{i=0}^{B-1} R_{\rm eff}^{(\Gamma_N,{\bf c})}(z_i, z_{i+1}) \leq (B+1) \left(1\vee R^{(\Gamma_N,{\bf c})}_{\max}\right),
\]
so the LHS of \eqref{2basymp} can be bounded below by
\[
\liminf_{\epsilon\downarrow 0} \liminf_{N\to\infty} \frac{\mathcal{T}_N}{\mathcal{V}_N} \frac{1}{B+1} \frac{1}{1\vee R_{\max}^{(\Gamma_N,{\bf c})}},
\]
which diverges by either Assumption \ref{ass:1} or \ref{ass:2}.


The remaining argument now mimics the end of the proof of Lemma \ref{lem:1beigv}. Since $\Lambda^{(2)}(j,x,y)$ is irreducible w.r.t.\@ $\mathcal{L}^{(2)}_{j,x,y}$, any probability density $f$ w.r.t.\@ $\nu_\alpha^{\Lambda^{(2)}(j,x,y)}$ which satisfies 
\begin{align*}
\nu_\alpha^{\Lambda^{(2)}(j,x,y)}\left[\sqrt{f}\left(-\mathcal{L}^{(2)}_{j,x,y} \sqrt{f}\right)\right]=0
\end{align*}
is constant on each hyperplane $\mathcal{X}_{\Lambda^{(2)}(j,x,y),k}$ with $k$ total number of particles. Let $\nu^{\Lambda^{(2)}(j,x,y)}_{*,k} = \nu^{\Lambda^{(2)}(j,x,y)}_\alpha\left(~\cdot~|\mathcal{X}_{\Lambda^{(2)}(j,x,y),k}\right)$, and note that $\{\eta(z): z\in \Lambda^{(2)}(j,x,y)\}$ is exchangeable under $\nu^{\Lambda^{(2)}(j,x,y)}_{*,k}$. Thus \eqref{2bsup3} equals
\begin{align}
\sup_{0\leq k\leq |\Lambda^{(2)}(j,x,y)|} \nu^{\Lambda^{(2)}(j,x,y)}_{*,k} \left[\Utildetwo{j}{x}{y}(\cdot)\right] = \sup_{0\leq k \leq |\Lambda^{(2)}(j,x,y)|} \nu^{\Lambda^{(2)}(j,x,y)}_{*,k}  \left[\avg{\eta}{\Lambda_j(x)}- \avg{\eta}{\Lambda_j(y)}\right].
\end{align}
We claim that this vanishes as $j\to\infty$ by the local central limit theorem below.
\end{proof}

\begin{lemma}
\label{lem:U2vanish}
Let $(\Lambda_j^1)_{j\geq 1}$ and $(\Lambda_j^2)_{j\geq 1}$ be two increasing sequences of denumerable sets such that:
\begin{itemize}
\item $\Lambda_j^1 \cap \Lambda_j^2 =\emptyset$ for each $j\geq 1$.
\item $|\Lambda_j^1| = |\Lambda_j^2|$ for each $j\geq 1$.
\item $\lim_{j\to\infty} |\Lambda_j^1| =\infty$.
\end{itemize}
Denote $\Lambda^{(2)}_j = \Lambda_j^1 \cup \Lambda_j^2$. Let $\nu^{\Lambda^{(2)}_j}_{*,k}$ denote the exchangeable probability measure on 
$$\mathcal{X}_{\Lambda^{(2)}_j,k}:=\left\{\eta\in \{0,1\}^{\Lambda^{(2)}_j} : \sum_{y\in \Lambda^{(2)}_j} \eta(y)=k\right\}.$$
 Then
\begin{align}
\label{eq:U2vanish}
\lim_{j\to\infty} \sup_{0\leq k\leq |\Lambda^{(2)}_j|} \nu^{\Lambda^{(2)}_j}_{*,k} \left[ \left|\avg{\eta}{\Lambda_j^1} -\avg{\eta}{\Lambda_j^2}\right|\right] =0.
\end{align}
\end{lemma}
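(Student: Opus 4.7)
The plan is to reduce the statement to a direct second-moment estimate for a hypergeometric random variable, making essential use of the equal-size hypothesis $|\Lambda_j^1| = |\Lambda_j^2|$. Write $n_j := |\Lambda_j^1| = |\Lambda_j^2|$, and for $\eta \in \mathcal{X}_{\Lambda^{(2)}_j, k}$ set $K^1(\eta) := \sum_{y \in \Lambda_j^1} \eta(y)$, so that the occupation count in $\Lambda_j^2$ is $k - K^1$. Under $\nu^{\Lambda^{(2)}_j}_{*,k}$ the $k$ occupied sites are chosen uniformly from the $2n_j$ sites of $\Lambda^{(2)}_j$, so $K^1$ is hypergeometric with mean $k/2$ and variance
\begin{equation*}
\mathrm{Var}_{*,k}(K^1) \;=\; \frac{k(2n_j - k)}{4(2n_j - 1)} \;\leq\; \frac{n_j}{4},
\end{equation*}
the last bound being uniform in $k \in \{0, 1, \ldots, 2n_j\}$ (since $k(2n_j - k) \leq n_j^2$).

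The next step is the algebraic identity, which relies crucially on $|\Lambda_j^1| = |\Lambda_j^2| = n_j$:
\begin{equation*}
\avg{\eta}{\Lambda_j^1} - \avg{\eta}{\Lambda_j^2} \;=\; \frac{K^1}{n_j} - \frac{k - K^1}{n_j} \;=\; \frac{2(K^1 - k/2)}{n_j}.
\end{equation*}
Taking absolute values, then expectations, and applying Jensen's inequality yields
\begin{equation*}
\nu^{\Lambda^{(2)}_j}_{*,k}\!\left[\left|\avg{\eta}{\Lambda_j^1} - \avg{\eta}{\Lambda_j^2}\right|\right] \;=\; \frac{2}{n_j}\, \nu^{\Lambda^{(2)}_j}_{*,k}\!\left[|K^1 - k/2|\right] \;\leq\; \frac{2}{n_j}\sqrt{\mathrm{Var}_{*,k}(K^1)} \;\leq\; \frac{1}{\sqrt{n_j}}.
\end{equation*}
Since this bound is uniform in $k$ and $n_j \to \infty$ by hypothesis, supremizing over $k$ and sending $j \to \infty$ gives \eqref{eq:U2vanish}.

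There is essentially no serious obstacle here; the hypergeometric variance is $O(n_j)$ while the $1/n_j$ normalization coming from the block averages is exactly the right order to beat it. The only point worth flagging is that the equal-size hypothesis is what produces the clean expression $2(K^1 - k/2)/n_j$ in which the mean exactly cancels against the contribution from $\Lambda_j^2$; without it one would obtain an asymmetric combination and would need to work slightly harder (though an analogous bound still goes through via the same second-moment argument). Alternatively one could argue via the convergence of the hypergeometric to a binomial (a local CLT-type statement, in the same spirit as the remark following Lemma \ref{lem:ensembles}), but the direct variance computation is sharper and self-contained.
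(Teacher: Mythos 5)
Your proof is correct and follows essentially the same route as the paper: identify the occupation count of $\Lambda_j^1$ under $\nu^{\Lambda^{(2)}_j}_{*,k}$ as a hypergeometric random variable, and bound the expected absolute deviation by its standard deviation (Cauchy--Schwarz), which is $O(\sqrt{n_j})$ against the $1/n_j$ normalization. The only cosmetic difference is that you bound the variance uniformly in $k$ via $k(2n_j-k)\leq n_j^2$, whereas the paper treats $k\leq n_j$ and then invokes particle--hole symmetry for $k>n_j$; both yield the same $O(n_j^{-1/2})$ bound.
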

\begin{proof}
While a more powerful proof exists (\emph{cf.\@} \cite{Petrov}*{Theorem VII.12}), here we give a proof using elementary discrete probability. 

First assume that $k \leq |\Lambda_j^1| = \frac{1}{2}|\Lambda_j^{(2)}|$. The expectation in (\ref{eq:U2vanish}) boils down to a sum
\begin{equation}
\label{eq:hyper}
\sum_{m=0}^k \frac{|k-2m|}{|\Lambda_j^1|} \frac{\displaystyle {k\choose m}{2|\Lambda_j^1|-k \choose |\Lambda_j^1|-m}}{\displaystyle {2|\Lambda_j^1| \choose |\Lambda_j^1|}}.
\end{equation} 
The second term in the summand represents the $\nu^{\Lambda_j^{(2)}}_{*,k}$-probability assigned to the space of configurations having $m$ particles in $\Lambda_j^1$ and $(k-m)$ particles in $\Lambda_j^2$. We recognize this as the hypergeometric distribution, arising from sampling without displacement. By the Cauchy-Schwarz inequality, (\ref{eq:hyper}) is bounded above by
\begin{align}
\label{eq:hyper2}
\frac{1}{|\Lambda_j^1|} \left[ \sum_{m=0}^k |k-2m|^2 \frac{\displaystyle {k\choose m}{2|\Lambda_j^1|-k \choose |\Lambda_j^1|-m}}{\displaystyle {2|\Lambda_j^1| \choose |\Lambda_j^1|}}\right]^{1/2} =\frac{2}{|\Lambda_j^1|} (E|Y-E[Y]|^2)^{1/2}= \frac{2}{|\Lambda_j^1|} [{\rm Var}(Y)]^{1/2},
\end{align}
where $Y$ is a hypergeometric random variable with parameters $(2|\Lambda_j^1|, |\Lambda_j^1|, k)$, and $E$ and ${\rm Var}$ denote the corresponding expectation and variance. It is a standard result (see \emph{e.g.} \cite{Ross}*{\S4.8.3}) that $E[Y]=|\Lambda_j^1| \frac{k}{2|\Lambda_j^1|} = \frac{k}{2}$, and
\begin{align*}
{\rm Var}(Y) = |\Lambda_j^1| \frac{k}{2|\Lambda_j^1|} \left(1-\frac{k}{2|\Lambda_j^1|}\right)\left( \frac{2|\Lambda_j^1|-|\Lambda_j^1|}{2|\Lambda_j^1|-1}\right) = \frac{k}{2} \left(1-\frac{k}{2|\Lambda_j^1|}\right) \left(\frac{|\Lambda_j^1|}{2|\Lambda_j^1|-1}\right),
\end{align*} 
which attains a maximum $\frac{|\Lambda_j^1|}{4}\left(\frac{|\Lambda_j^1|}{2|\Lambda_j^1|-1}\right)$ at $k=|\Lambda_j^1|$. Therefore for all $k\leq |\Lambda_j^1|$, (\ref{eq:hyper2}) is bounded above by
\begin{align*}
\frac{2}{|\Lambda_j^1|} \left[\frac{|\Lambda_j^1|}{4} \left(\frac{|\Lambda_j^1|}{2|\Lambda_j^1|-1}\right)\right]^{1/2} = \frac{1}{|\Lambda_j^1|^{1/2}}\left(\frac{|\Lambda_j^1|}{2|\Lambda_j^1|-1}\right)^{1/2},
\end{align*}
which vanishes in the limit $j\to\infty$.
The same conclusion holds for the case $|\Lambda_j^1| < k \leq 2|\Lambda_j^1|$ by appealing to the particle-hole symmetry (particles under the measure $\nu_{*,k}^{\Lambda_j^{(2)}}$ are identified with holes under the measure $\nu_{*,|\Lambda_j^{(2)}|-k}^{\Lambda_j^{(2)}}$).
\end{proof}

\begin{remark}
The limit statement \eqref{eq:U2vanish} continues to hold even if the cardinalities $|\Lambda_j^1|$ and $|\Lambda_j^2|$ are unequal, \emph{i.e.,} in the hypothesis of Lemma \ref{lem:U2vanish}, one drops the second condition, and replaces the third condition with $\lim_{j\to\infty} |\Lambda_j^i|=\infty$, $i\in \{1,2\}$.
\end{remark}

\subsection{Partitioning and averaging}

Having proved the spectral estimate, Lemma \ref{lem:2beigv}, our next task is to connect $\Utildetwo{j}{\pt}{y}(\eta)$ to $\Utwo{N}{\epsilon}{j}(\pt,\eta)$. We begin with a general lemma concerning averages.

\begin{lemma}
\label{lem:avg}
Let $\Lambda$ be a finite set which admits the partition
\begin{align*}
\Lambda = \left(\bigcup_{i=1}^L \Lambda_i\right) \cup \left(\bigcup_{\ell=1}^D \mathfrak{T}_\ell\right).
\end{align*}
Then for every function $g: \Lambda\to\mathbb{R}$,
\begin{align*}
\avg{g}{\Lambda_1} - \avg{g}{\Lambda} &=  \sum_{i=2}^L \frac{1}{2}\left(\frac{1}{L} + \frac{|\Lambda_i|}{|\Lambda|}\right) \left(\avg{g}{\Lambda_1} - \avg{g}{\Lambda_i}\right) \\
\nonumber & \quad + \sum_{i=1}^L \frac{1}{2}\left(\frac{1}{L} - \frac{|\Lambda_i|}{|\Lambda|}\right) \left(\avg{g}{\Lambda_1} + \avg{g}{\Lambda_i}\right) \\
\nonumber & \quad - \sum_{\ell=1}^D \frac{|\mathfrak{T}_\ell|}{|\Lambda|} \avg{g}{\mathfrak{T}_\ell}.
\end{align*}
\end{lemma}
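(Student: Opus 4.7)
The statement is a purely algebraic identity, so my plan is to verify it by direct expansion of the right-hand side. First I would write down the trivial decomposition coming from the partition, namely
\[
\avg{g}{\Lambda} = \sum_{i=1}^L \frac{|\Lambda_i|}{|\Lambda|}\avg{g}{\Lambda_i} + \sum_{\ell=1}^D \frac{|\mathfrak{T}_\ell|}{|\Lambda|}\avg{g}{\mathfrak{T}_\ell},
\]
so that
\[
\avg{g}{\Lambda_1} - \avg{g}{\Lambda} = \avg{g}{\Lambda_1} - \sum_{i=1}^L \frac{|\Lambda_i|}{|\Lambda|}\avg{g}{\Lambda_i} - \sum_{\ell=1}^D \frac{|\mathfrak{T}_\ell|}{|\Lambda|}\avg{g}{\mathfrak{T}_\ell}.
\]
This already matches the last sum in the target expression, so what remains is to show that the first two sums on the RHS collapse to $\avg{g}{\Lambda_1} - \sum_{i=1}^L \frac{|\Lambda_i|}{|\Lambda|}\avg{g}{\Lambda_i}$.

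The key observation is the elementary identity
\[
\tfrac{1}{2}(a+b)(x-y) + \tfrac{1}{2}(a-b)(x+y) = ax - by
\]
for any real numbers $a,b,x,y$. I would apply this termwise with $a = 1/L$, $b = |\Lambda_i|/|\Lambda|$, $x = \avg{g}{\Lambda_1}$, and $y = \avg{g}{\Lambda_i}$. Pairing the $i \geq 2$ summand of the first sum with the $i \geq 2$ summand of the second sum yields
\[
\frac{1}{L}\avg{g}{\Lambda_1} - \frac{|\Lambda_i|}{|\Lambda|}\avg{g}{\Lambda_i}, \qquad i=2,\ldots,L,
\]
while the $i=1$ term of the second sum (for which $x=y=\avg{g}{\Lambda_1}$) contributes $(\tfrac{1}{L} - \tfrac{|\Lambda_1|}{|\Lambda|})\avg{g}{\Lambda_1}$. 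Summing these $L$ contributions telescopes the $1/L$ coefficients into $\avg{g}{\Lambda_1}$ and leaves $-\sum_{i=1}^L \frac{|\Lambda_i|}{|\Lambda|}\avg{g}{\Lambda_i}$, as required.

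There is no real obstacle here beyond careful bookkeeping; the only subtlety is the asymmetric role of $i=1$ in the second sum (the $i=1$ term of the first sum is absent), which is precisely what compensates for the missing self-difference $\avg{g}{\Lambda_1} - \avg{g}{\Lambda_1} = 0$ in the first sum. I would present the proof as a single display expanding the RHS and regrouping, with a one-line remark invoking the above pairing identity to make the regrouping transparent. It may also be worth noting in passing that this particular form is tailored for the two-blocks estimate: the $(\avg{g}{\Lambda_1} - \avg{g}{\Lambda_i})$ factors are the quantities controllable via the moving particle lemma, while the $(\tfrac{1}{L} - \tfrac{|\Lambda_i|}{|\Lambda|})$ prefactors encode the deviation of the partition from equipartition, and hence vanish when the blocks $\Lambda_i$ nearly tile $\Lambda$ into $L$ equal parts.
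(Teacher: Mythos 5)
Your proof is correct and is essentially the paper's own argument: both use the partition identity $\avg{g}{\Lambda} = \sum_{i=1}^L \frac{|\Lambda_i|}{|\Lambda|}\avg{g}{\Lambda_i} + \sum_{\ell=1}^D \frac{|\mathfrak{T}_\ell|}{|\Lambda|}\avg{g}{\mathfrak{T}_\ell}$ together with the elementary regrouping $\tfrac{1}{2}(a+b)(x-y)+\tfrac{1}{2}(a-b)(x+y)=ax-by$ applied with $a=1/L$, $b=|\Lambda_i|/|\Lambda|$, and the observation that the $i=1$ difference term vanishes. The only cosmetic difference is that you expand the right-hand side back to the left-hand side rather than the reverse, so there is nothing to add.
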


\begin{proof}
We have the easy identity
\begin{align*}
\avg{g}{\Lambda} = \sum_{i=1}^L \frac{|\Lambda_i|}{|\Lambda|} \avg{g}{\Lambda_i} + \sum_{\ell=1}^D \frac{|\mathfrak{T}_\ell|}{|\Lambda|} \avg{g}{\mathfrak{T}_\ell}.
\end{align*}
Therefore
\begin{align}
\nonumber \avg{g}{\Lambda_1} - \avg{g}{\Lambda} &= \avg{g}{\Lambda_1} - \sum_{i=1}^L \frac{|\Lambda_i|}{|\Lambda|} \avg{g}{\Lambda_i}- \sum_{\ell=1}^D \frac{|\mathfrak{T}_\ell|}{|\Lambda|} \avg{g}{\mathfrak{T}_\ell} \\
\label{avgdec} &= \sum_{i=1}^L \left(\frac{1}{L} \avg{g}{\Lambda_1} - \frac{|\Lambda_i|}{|\Lambda|} \avg{g}{\Lambda_i}\right) - \sum_{\ell=1}^D \frac{|\mathfrak{T}_\ell|}{|\Lambda|} \avg{g}{\mathfrak{T}_\ell}.
\end{align}
Note that the first term on the RHS of \eqref{avgdec} equals
\begin{align*}
\sum_{i=1}^L \frac{1}{2}\left(\frac{1}{L} + \frac{|\Lambda_i|}{|\Lambda|}\right) \left( \avg{g}{\Lambda_1} - \avg{g}{\Lambda_i} \right) + \sum_{i=1}^L \frac{1}{2}\left(\frac{1}{L} - \frac{|\Lambda_i|}{|\Lambda|}\right) \left( \avg{g}{\Lambda_1} + \avg{g}{\Lambda_i} \right).
\end{align*}
The index $i$ in the first sum in effect ranges from $2$ to $L$. The lemma follows.
\end{proof}

We wish to apply this lemma to the setting where $\Lambda_1 = \Lambda_j(\pt)$ and $\Lambda = B(\pt, r_{\epsilon N})$. To do this, we provide a partition of $B(\pt,r_{\epsilon N})$ consisting of many ``blocks'' and one ``tail.'' To be precise, a subset $\Lambda \subset B(\pt,r_{\epsilon N}) \setminus \Lambda_j(\pt)$ is a ''block'' if $|\Lambda| = |\Lambda_j(\pt)|$, and a ``tail'' if $|\Lambda|<|\Lambda_j(\pt)|$. No assumption on the connectedness of the blocks is necessary.

\begin{remark}
The preceding paragraph may remind the reader of the \emph{graph partitioning problem} in theoretical computer science, namely, finding an optimal partition (which minimizes some predefined cost function) of a graph on $n$ vertices into $k$ subgraphs, each containing at most $(1+\epsilon)\lfloor \frac{n}{k}\rfloor$ vertices. Given the assumptions imposed at the beginning of this paper, we do not need to exploit the full complexity of the graph partitioning problem; rather, the separation of microscopic and macroscopic scales ($j$ vs.\@ $\epsilon N$) is enough for our argument to go through. See \eqref{2bsup3}, \eqref{2basymp}, and the ensuing arguments for the key estimate. That said, it is possible to improve the estimates via optimizing over all possible graph partitions.
\end{remark}

\begin{proposition}
\label{prop:goodavg}
Consider the partition
\begin{align*}
B(\pt,r_{\epsilon N}) = \Lambda_j(\pt) \cup \left(\bigcup_{i=2}^{L_N} \Lambda_j(y_i) \right) \cup \mathfrak{T},
\end{align*}
where
\begin{align*}
L_N = L(\pt,\Lambda_j(\pt), N, \epsilon) = \left\lfloor \frac{|B(\pt,r_{\epsilon N})|}{|\Lambda_j(\pt)|} \right\rfloor,
\end{align*}
\begin{align*}
|\Lambda_j(y_i)| = |\Lambda_j(\pt)|, \quad i\in\{2,3,\cdots, L_N\}.
\end{align*}
Then
\begin{align*}
\left|\Utwo{N}{\epsilon}{j}(\pt,\eta)\right|:=\left|\avg{\eta}{\Lambda_j(\pt)} - \avg{\eta}{B(\pt, r_{\epsilon N})}\right| \leq \sup_{i\in \{2,3,\cdots, L_N\}} \left|\avg{\eta}{\Lambda_j(\pt)} - \avg{\eta}{\Lambda_j(y_i)}\right| + o_N(1)
\end{align*}
as $N\to\infty$.
\end{proposition}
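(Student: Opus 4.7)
The plan is to apply Lemma \ref{lem:avg} directly with $\Lambda = B(\pt,r_{\epsilon N})$, $\Lambda_1 = \Lambda_j(\pt)$, $\Lambda_i = \Lambda_j(y_i)$ for $i\in\{2,\ldots,L_N\}$, $L = L_N$, and a single tail piece $\mathfrak{T}_1 = \mathfrak{T}$, with the test function $g(z) = \eta(z)$. Since every block has the same cardinality $|\Lambda_j(y_i)| = |\Lambda_j(\pt)| =: v_j$ for $i\in\{1,\ldots,L_N\}$ (with $y_1 = \pt$), the three sums in Lemma \ref{lem:avg} simplify considerably, and the task becomes bounding the ``diagonal correction'' sum and the tail sum by quantities that vanish as $N\to\infty$.

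First I would record the key arithmetic facts. Since $L_N = \lfloor |B(\pt,r_{\epsilon N})|/v_j\rfloor$, one has $L_N v_j \leq |B(\pt,r_{\epsilon N})| < (L_N+1)v_j$, from which
\begin{align*}
0 \;\leq\; \frac{1}{L_N} - \frac{v_j}{|B(\pt,r_{\epsilon N})|} \;<\; \frac{1}{L_N} - \frac{1}{L_N+1} \;=\; \frac{1}{L_N(L_N+1)},
\end{align*}
and $|\mathfrak{T}|/|B(\pt,r_{\epsilon N})| < v_j/|B(\pt,r_{\epsilon N})| \leq 1/L_N$. Since $v_j$ is fixed (depending only on $j$) while $|B(\pt,r_{\epsilon N})|\to\infty$ as $N\to\infty$ (by local finiteness and $r_{\epsilon N}\uparrow\infty$), we have $L_N\to\infty$ as $N\to\infty$ for any fixed $\epsilon,j$.

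Next I would bound the first sum in Lemma \ref{lem:avg}. The coefficient there satisfies $\tfrac{1}{2}(\tfrac{1}{L_N}+\tfrac{v_j}{|B(\pt,r_{\epsilon N})|}) \leq \tfrac{1}{L_N}$, so
\begin{align*}
\left|\sum_{i=2}^{L_N} \tfrac{1}{2}\Bigl(\tfrac{1}{L_N}+\tfrac{v_j}{|B(\pt,r_{\epsilon N})|}\Bigr)\bigl(\avg{\eta}{\Lambda_j(\pt)}-\avg{\eta}{\Lambda_j(y_i)}\bigr)\right| \leq \frac{L_N-1}{L_N}\sup_{i\in\{2,\ldots,L_N\}}\left|\avg{\eta}{\Lambda_j(\pt)}-\avg{\eta}{\Lambda_j(y_i)}\right|,
\end{align*}
which is bounded by the supremum on the right-hand side of the claim. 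For the second sum, I use $\eta\in\{0,1\}$ so that $|\avg{\eta}{\Lambda_j(\pt)}+\avg{\eta}{\Lambda_j(y_i)}|\leq 2$, yielding
\begin{align*}
\left|\sum_{i=1}^{L_N} \tfrac{1}{2}\Bigl(\tfrac{1}{L_N}-\tfrac{v_j}{|B(\pt,r_{\epsilon N})|}\Bigr)\bigl(\avg{\eta}{\Lambda_j(\pt)}+\avg{\eta}{\Lambda_j(y_i)}\bigr)\right| \leq L_N \cdot \frac{1}{L_N(L_N+1)} = \frac{1}{L_N+1}.
\end{align*}
The tail contribution is controlled trivially: $(|\mathfrak{T}|/|B(\pt,r_{\epsilon N})|)|\avg{\eta}{\mathfrak{T}}| \leq 1/L_N$. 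Summing these three estimates via the triangle inequality and noting $1/L_N \to 0$ as $N\to\infty$ yields the claim, with $o_N(1) = O(1/L_N)$.

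I do not foresee any genuine obstacle; the only subtle point is to be careful about the tail, namely verifying that the ``leftover'' set $\mathfrak{T}$ after exhausting $B(\pt,r_{\epsilon N})\setminus\Lambda_j(\pt)$ by translates of $\Lambda_j(\pt)$ has cardinality strictly less than $v_j$ (which is automatic by the definition of $L_N$ as a floor), and to check that the bound on $\tfrac{1}{L_N}-\tfrac{v_j}{|B(\pt,r_{\epsilon N})|}$ is of order $L_N^{-2}$, so that after multiplying by the $L_N$ terms in the sum and the uniform bound on $\avg{\eta}{\cdot}$, the resulting error is still $O(1/L_N) = o_N(1)$.
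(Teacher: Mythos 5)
Your proposal is correct and follows essentially the same route as the paper: apply Lemma \ref{lem:avg} with $g=\eta$, bound the first sum by the supremum (its coefficients sum to at most $1$), and control the remaining coefficient-mismatch and tail terms by $O(1/L_N)=o_N(1)$ using $|\mathfrak{T}|<|\Lambda_j(\pt)|$. Your explicit bound $\tfrac{1}{L_N}-\tfrac{v_j}{|B(\pt,r_{\epsilon N})|}<\tfrac{1}{L_N(L_N+1)}$ is just a slightly more quantitative version of the paper's estimate of the same term, so the two arguments coincide in substance.
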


\begin{proof}
Using Lemma \ref{lem:avg} and the triangle inequality we find
\begin{align*}
\left|\avg{\eta}{\Lambda_j(\pt)} - \avg{\eta}{B(\pt, r_{\epsilon N})}\right| \leq I_1 + I_2 + I_3,
\end{align*}
where
\begin{align*}
I_1 &= \sup_{i\in \{2,3,\cdots, L_N\}} \left| \avg{\eta}{\Lambda_j(\pt)} - \avg{\eta}{\Lambda_j(y_i)}\right| \cdot \sum_{i=2}^{L_N} \frac{1}{2} \left(\frac{1}{L_N} + \frac{|\Lambda_j(\pt)|}{L_N |\Lambda_j(\pt)| + |\mathfrak{T}|} \right),\\
I_2 &= \sum_{i=1}^{L_N} \frac{1}{2} \left|\frac{1}{L_N} - \frac{|\Lambda_j(\pt)|}{L_N|\Lambda_j(\pt)| + |\mathfrak{T}|} \right| \left|\avg{\eta}{\Lambda_j(\pt)} + \avg{\eta}{\Lambda_j(y_i)}\right|,\\
I_3 &= \frac{|\mathfrak{T}|}{L_N |\Lambda_j(\pt)| + |\mathfrak{T}|} |\avg{\eta}{\mathfrak{T}}|.
\end{align*}
Observe that the sum in $I_1$ is $\leq 1$. For $I_2$ and $I_3$, we can replace each instance of the average density $\avg{\eta}{\Lambda}$ by the upper bound $1$, resulting in the estimates
\begin{align*}
I_2 &\leq \sum_{i=1}^{L_N} \left|\frac{1}{L_N} - \frac{|\Lambda_j(\pt)|}{L_N|\Lambda_j(\pt)|+|\mathfrak{T}|}\right| = \left( 1+ L_N\frac{|\Lambda_j(\pt)|}{|\mathfrak{T}|}\right)^{-1}, \\
I_3 &\leq \frac{|\mathfrak{T}|}{L_N |\Lambda_j(\pt)|+ |\mathfrak{T}|} \leq \frac{|\mathfrak{T}|}{L_N |\Lambda_j(\pt)|}. 
\end{align*}
Since $|\mathfrak{T}| < |\Lambda_j(\pt)|$ by construction, we have $I_2 + I_3 = O\left(\frac{1}{L_N}\right)= o_N(1)$.
\end{proof}

We now have all the ingredients to prove the two-blocks estimate.

\begin{proof}[Proof of Theorem \ref{thm:2block}]
Recall that $\Phi_\pt: [0,1] \to \mathbb{R}$ is Lipschitz: there exists a positive constant $C_\Phi$ such that
\begin{align*}
\left|\Phi_\pt(\rho_1)- \Phi_\pt(\rho_2)\right| \leq C_\Phi |\rho_1-\rho_2|, \quad \rho_1,\rho_2 \in [0,1].
\end{align*}
Using this and Proposition \ref{prop:goodavg}, we obtain the estimate
\begin{align*}
\left|\int_0^T \, \Utwo{N}{\epsilon}{j}(\pt,\eta^N_t)\,dt\right|
& \leq  C_\Phi \,\int_0^T \,\left| \avg{\eta^N_t}{\Lambda_j(\pt)} - \avg{\eta^N_t}{B(\pt, r_{\epsilon N})}\right| \,dt\\
&\leq C_\Phi \left(\int_0^T \, \sup_{i\in \{2,3,\cdots, L_N\}}  \left|\Utildetwo{j}{\pt}{y_i}(\eta_t^N)\right|\,dt+T \cdot o_N(1)\right).
\end{align*}
Thus it suffices to prove that for each $i\in \{2,3,\cdots, L_N\}$, each $T>0$ and each $\delta>0$,
\begin{align*}
\limsup_{j\to\infty} \limsup_{\epsilon\downarrow 0} \limsup_{N\to\infty} \frac{1}{\mathcal{V}_N}\log \mathbb{P}^N_\alpha\left[ \left|\int_0^T\Utildetwo{j}{\pt}{y_i}(\eta^N_t)  \,dt\right|>\delta\right] = -\infty.
\end{align*}
This is proved using the spectral estimate, Lemma \ref{lem:2beigv}, and the exponential Chebyshev's inequality, in the same fashion as in the proof of Proposition \ref{thm:1block}.
\end{proof}

\section{Local ergodicity in the boundary-driven exclusion process: Proof of Theorem \ref{thm:localergodicboundary}} \label{sec:boundary}

\subsection{Dirichlet problem for the one-site marginal density} \label{sec:dirichlet}

As mentioned in \S\ref{sec:mainboundary}, the invariant measure $\mu_{\rm inv}$ of the boundary-driven exclusion process is not a product measure. However, it is possible to compute the one-site marginals of $\mu_{\rm inv}$, denoted $\rho(x) := E_{\mu_{\rm inv}}[\eta(x)]$, $x\in V(G)$. Indeed, using the fact that $\mu_{\rm inv}$ is invariant, we see that $E_{\mu_{\rm inv}}[(\mathcal{L}^{\rm bEX}_{(G,{\bf c})} \eta)(x)]=0$ for all $x\in V(G)$. Expanding this equality yields the following linear system:
\begin{align}
\label{ODErho}
\left\{
\begin{array}{ll}\displaystyle \sum_{y\in V(G)} c_{xy} [\rho(x) - \rho(y)]=0 &  \text{for all}~ x\in V(G)\setminus \partial V, \\ \displaystyle
\sum_{y\in V(G)\setminus \partial V} c_{ay} [\rho(a) - \rho(y)] = \lambda_+(a) [1-\rho(a)]  - \lambda_-(a) \rho(a) & \text{for all}~a \in \partial V.
   \end{array} \right.
\end{align}
Observe that $\rho$ is harmonic w.r.t.\@ the graph Laplacian on $V(G)\setminus \partial V$, and satisfies a Robin boundary condition on $\partial V$---the LHS of the second equation is $-i_a(\rho)$, the normal derivative of $\rho$ at $a\in \partial V$, equivalently, the flow of $\rho$ into $a\in \partial V$. 

We may recast \eqref{ODErho} into a pure Dirichlet problem.

\begin{proposition}[Dirichlet-to-Neumann map for $\rho$]
\label{prop:DtoN}
The unique solution $\rho$ of (\ref{ODErho}) is also the unique solution of the Dirichlet problem
\begin{align}
\label{ODErho2}
\left\{
\begin{array}{ll}\displaystyle \sum_{y\in V(G)} c_{xy} [\rho(x) - \rho(y)]=0 &  \text{for all}~ x\in V(G)\setminus \partial V,\\
\displaystyle \rho(a)\left[c_a+\lambda_+(a)+\lambda_-(a) \right] - \sum_{b\in \partial V} \hat{c}_{a,b} \rho(b) = \lambda_+(a) & \text{for all}~a \in \partial V,
   \end{array} \right.
\end{align}
where $\hat{c}_{a,b} := \sum_{y\in V(G)} c_{ay} h^b(y)$ for $a,b\in \partial V$, and $h^b$ is the unique solution of the Dirichlet problem
\begin{align}
\label{ODErho3}
\left\{
\begin{array}{ll}\displaystyle \sum_{y\in V(G)} c_{xy} [h^b(x) - h^b(y)]=0 &  \text{for all}~ x\in V(G)\setminus\partial V,\\
h^b(a) = \delta_b(a) & \text{for all}~a \in \partial V.
   \end{array} \right.
\end{align}
\end{proposition}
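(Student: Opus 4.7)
The plan is to pass from \eqref{ODErho} to \eqref{ODErho2} by eliminating the interior values of $\rho$ via discrete harmonic extension from the boundary $\partial V$, thereby reducing the Robin condition to a closed linear system on $\partial V$ alone.

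First I would note that the first equation of \eqref{ODErho} is precisely the statement that $\rho$ is discrete-harmonic on $V(G)\setminus \partial V$. Since $(G,{\bf c})$ is finite and connected, the Dirichlet problem \eqref{ODErho3} has a unique solution $h^b$ for each $b\in \partial V$ by the standard maximum principle. By linearity and uniqueness, any function harmonic on $V(G)\setminus \partial V$ is reconstructed from its boundary trace via
\[
\rho(x) \;=\; \sum_{b\in \partial V} \rho(b)\, h^b(x), \qquad x\in V(G),
\]
the identity being trivial at $x=a\in \partial V$ from $h^b(a)=\delta_b(a)$.

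Next I would substitute this representation into the Robin boundary condition. Using the standing assumption $c_{aa'}=0$ for all $a,a'\in\partial V$, the sum $\sum_{y\in V(G)\setminus \partial V} c_{ay}[\rho(a)-\rho(y)]$ in \eqref{ODErho} equals $c_a\rho(a)-\sum_{y\in V(G)} c_{ay}\rho(y)$, so the second equation rearranges to
\[
[c_a+\lambda_+(a)+\lambda_-(a)]\,\rho(a) - \sum_{y\in V(G)} c_{ay}\rho(y) \;=\; \lambda_+(a).
\]
Interchanging the order of summation,
\[
\sum_{y\in V(G)} c_{ay}\rho(y) \;=\; \sum_{b\in\partial V}\rho(b)\sum_{y\in V(G)} c_{ay}h^b(y) \;=\; \sum_{b\in\partial V}\hat{c}_{a,b}\rho(b),
\]
which gives exactly the boundary condition in \eqref{ODErho2}. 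Reversing this substitution (starting from a solution of \eqref{ODErho2}, using harmonicity on the interior to justify the representation formula, and then running the computation backwards) shows the two systems have the same solution set.

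Uniqueness for \eqref{ODErho} is inherited from the uniqueness of the invariant measure $\mu_{\rm inv}$, which in turn follows from irreducibility of the boundary-driven chain on the finite state space $\{0,1\}^{V(G)}$ under Condition (\nameref{ass:boundaryrate}). By the equivalence just established, \eqref{ODErho2} inherits the same unique solution. The main point requiring attention, and hardly an obstacle, is the harmless extension of sums from $V(G)\setminus\partial V$ to $V(G)$, which is legitimated by the no-boundary-edge convention $c_{aa'}=0$; beyond that, the argument is purely algebraic once the harmonic extension is in hand.
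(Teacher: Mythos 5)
Your argument is correct, and it reaches the same algebraic identity as the paper, but by a more direct route. The paper recasts \eqref{ODErho} and \eqref{ODErho3} in block-matrix form, proves that $\boldsymbol I-\boldsymbol P$ (the interior part of the transition matrix) is invertible via the fundamental-matrix argument for absorbing chains, and then eliminates $\rho|_{V^0}$ by taking the Schur complement, identifying $\boldsymbol B(\boldsymbol I-\boldsymbol P)^{-1}\boldsymbol A$ with the matrix $(\hat c_{a,b}/c_a)$; you instead invoke unique solvability of the discrete Dirichlet problem (maximum principle) to write $\rho(x)=\sum_{b\in\partial V}\rho(b)h^b(x)$ and substitute this representation straight into the Robin condition, after the harmless rewriting $\sum_{y\notin\partial V}c_{ay}[\rho(a)-\rho(y)]=c_a\rho(a)-\sum_{y}c_{ay}\rho(y)$ permitted by $c_{aa'}=0$. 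The two routes rest on the same underlying fact (the interior values are determined by the boundary trace), so the difference is one of packaging: the paper's Schur-complement computation makes the Dirichlet-to-Neumann structure and the invertibility of $\boldsymbol I-\boldsymbol P$ explicit without appealing separately to a maximum principle, while your version is shorter and keeps the computation at the level of functions rather than block matrices. One small caveat: your closing appeal to uniqueness of $\mu_{\rm inv}$ only yields \emph{existence} of a solution of the linear system \eqref{ODErho} (its one-site marginals), not uniqueness of solutions of that system; uniqueness should instead be argued directly, e.g.\ if $u$ solves the homogeneous version of \eqref{ODErho} then $u$ is harmonic on $V^0$ and satisfies $i_u(a)=[\lambda_+(a)+\lambda_-(a)]u(a)$ on $\partial V$ with $\lambda_\pm(a)>0$, and a maximum-principle (or energy) argument forces $u\equiv 0$ — the same tool you already use for the representation formula, so this is easily repaired.
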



\begin{remark}
\label{rem:trace}
By the strong maximum principle, $\hat{c}_{a,b}$ is positive for each $a,b\in \partial V$. Define $\widetilde{p}(a,b) = \frac{\hat{c}_{a,b}}{c_a}$. We claim that $\widetilde{p}(\cdot,\cdot)$ is a transition probability, \emph{i.e.,} $c_a = \sum_{b\in \partial V} \hat{c}_{a,b}$. To see this, observe that $\sum_{b\in \partial V} h^b$ is the unique harmonic function with boundary values identically $1$, so $\sum_{b\in \partial V} h^b(y) =1$ for all $y\in V$. It follows that 
\begin{align*}
\sum_{b\in \partial V} \widetilde{p}(a,b) = \sum_{b\in \partial V} \frac{\hat{c}_{a,b}}{c_a} = \frac{1}{c_a} \sum_{b\in\partial V} \sum_{y\in V(G)} c_{ay} h^b(y) = \frac{1}{c_a} \sum_{y\in V(G)} c_{ay}= 1.
\end{align*}

The Markov chain on $\partial V$ with transition probability $\widetilde{p}(\cdot,\cdot)$ is the \emph{trace} of the symmetric random walk process on $(G,{\bf c})$ to $\partial V$, which has an associated Dirichlet energy
\begin{align*}
{\rm Tr}_{\partial V} \mathcal{E}(g,g) := \frac{1}{2} \sum_{a\in \partial V} \sum_{b\in \partial V} \hat{c}_{a,b} [g(a)-g(b)]^2, \quad g: \partial V\to\mathbb{R}.
\end{align*}
It is proved in \cite{Ngasket}*{Theorem A.10} that 
\begin{align*}
{\rm Tr}_{\partial V} \mathcal{E}(g,g) = \mathcal{E}(h_g,h_g) := \sum_{xy\in E(G)} c_{xy} [h_g(x)-h_g(y)]^2,
\end{align*}
where $h_g$ is the unique harmonic extension of $g$ to $V(G)$. In general, $\tilde{p}(a,a)$ may be nonzero, and $\hat{c}_{a,b} \neq \hat{c}_{b,a}$, which implies that the trace process need not be symmetric.

We can also express the electric flow into $a\in \partial V$ in terms of the trace process. Given $h: V(G)\to \mathbb{R}$ which is harmonic on $V(G)\setminus \partial V$, we have
\begin{align*}
i_h(a) &:=\sum_{y\in V(G)} c_{ay}[h(y)-h(a)] = \sum_{y\in V(G)} c_{ay} \left[\sum_{b\in \partial V} h(b) h^b(y)- h(a)\right]\\
\nonumber &= \sum_{b\in \partial V} \hat{c}_{a,b} h(b) - c_a h(a) = \sum_{b\in \partial V} \hat{c}_{a,b} [h(b)-h(a)].
\end{align*} 
\end{remark}

\begin{proof}[Proof of Proposition \ref{prop:DtoN}]
Throughout the proof we write $V^0 := V(G) \setminus \partial V$. Let $\ell(S) =\{f: S\to\mathbb{R}\}$, and for each $x,y \in V$, let $p(x,y)=\frac{c_{xy}}{c_x}$ denote the transition probability from $x$ to $y$. Then we can rewrite (\ref{ODErho}) in block matrix form
\begin{align}
\label{hblock}
\begin{bmatrix}\boldsymbol{I-P} & -\boldsymbol{A} \\ -\boldsymbol{B}  & \boldsymbol{I+\sigma} \end{bmatrix}
\begin{bmatrix} \left.\boldsymbol{\rho}\right|_{V^0} \\ \left.\boldsymbol{\rho}\right|_{\partial V} \end{bmatrix} = 
\begin{bmatrix} 0 \\ \boldsymbol{\Lambda}_+ \end{bmatrix},
\end{align}
where 
\begin{align*}
\boldsymbol P : \ell(V^0) \to \ell(V^0),& \quad  ({\boldsymbol P}f)(x) = \sum_{y\in V^0} p(x,y) f(y),\\
\boldsymbol A : \ell(\partial V) \to \ell(V^0),& \quad ({\boldsymbol A}f)(x) = \sum_{a\in \partial V} p(x,a) f(a),\\
\boldsymbol B : \ell(V^0) \to \ell(\partial V),& \quad ({\boldsymbol B}f)(a) = \sum_{x \in V^0} p(a,x) f(x) ,\\ 
\boldsymbol \sigma : \ell(\partial V) \to \ell(\partial V), & \quad  ({\boldsymbol \sigma}f)(a) = c_a^{-1} [\lambda_+(a) + \lambda_-(a)]f(a),
\end{align*}
and $\boldsymbol \rho$ (resp.\@ ${\boldsymbol \Lambda}_+$) denotes the column vector with entries $\rho(x),~x\in V$ (resp. $c_a^{-1} \lambda_+(a),~a\in \partial V$).
Likewise, (\ref{ODErho3}) can be rewritten in block matrix form as
\begin{align}
\label{hblock2}
\begin{bmatrix} \boldsymbol{I-P} & -\boldsymbol{A} \\ 0 & \boldsymbol{I} \end{bmatrix} \begin{bmatrix} \left.\boldsymbol{h}^b \right|_{V^0} \\ \left.\boldsymbol{h}^b \right|_{\partial V}\end{bmatrix} = \begin{bmatrix} 0 \\ \delta_b\end{bmatrix}
\end{align}
with the same $\boldsymbol P$ and $\boldsymbol A$ as above. In particular,
\begin{align}
\label{eq:I-P}
(\boldsymbol I - \boldsymbol P) \left.\boldsymbol h^b\right|_{V^0} = \boldsymbol A \left.\boldsymbol h^b\right|_{\partial V}.
\end{align}

We claim that $\boldsymbol I - \boldsymbol P$ is invertible. Observe that
$$
\begin{bmatrix} \boldsymbol P & \boldsymbol A \\ 0 & \boldsymbol I \end{bmatrix}
$$
is the stochastic matrix of a Markov chain on $V(G)$ which is absorbed on $\partial V$, \emph{i.e.,} $p(a,a)=1$ for all $a\in \partial V$. Since $\boldsymbol P$ is the submatrix restricted to the transient states $V^o$, by \cite{GrinsteadSnell}*{Theorems 11.3 \& 11.4}, $\boldsymbol I - \boldsymbol P$ is invertible. ($(\boldsymbol I-\boldsymbol P)^{-1}$ is called the \emph{fundamental matrix} in \cites{GrinsteadSnell, DoyleSnell}.)
As a result \eqref{eq:I-P} yields
\begin{align}
\left[(\boldsymbol I-\boldsymbol P)^{-1} \boldsymbol A\right] \left.\boldsymbol h^b\right|_{\partial V} = \left.\boldsymbol h^b\right|_{V^0}.
\end{align} 

Referring back to \eqref{hblock}, we wish to identify the Dirichlet boundary condition for $\rho$. This can be achieved by eliminating $\left.\boldsymbol \rho\right|_{V^0}$ from the system \eqref{hblock}, \emph{i.e.,} computing the Schur complement of the block matrix with respect to the $V^0\times V^0$ subblock, to obtain
\begin{align}
\label{rhoSchur}
\left[(\boldsymbol I + \boldsymbol \sigma)-\boldsymbol B (\boldsymbol I -\boldsymbol P)^{\boldsymbol -1}\boldsymbol A \right] \left. \boldsymbol \rho\right|_{\partial V} = {\boldsymbol\Lambda}_+ 
\end{align}
Using the linearity of the Dirichlet harmonic functions, we write $\rho(x) = \sum_{b\in \partial V} \rho(b) h^b(x)$ for every $x\in V$. We can thus express the two terms on the LHS of (\ref{rhoSchur}) in component form as follows:
\begin{align*}
\left((\boldsymbol I + \boldsymbol \sigma) \left.\boldsymbol \rho\right|_{\partial V}\right)(a) &= \left(1+c_a^{-1} (\lambda_+(a) + \lambda_-(a))\right) \rho(a),\\
\left(\left[\boldsymbol B(\boldsymbol I-\boldsymbol P)^{-1} \boldsymbol A\right] \left.\boldsymbol \rho\right|_{\partial V}\right)(a) &= \sum_{b\in \partial V} \rho(b) \left(\left[\boldsymbol B(\boldsymbol I-\boldsymbol P)^{-1} \boldsymbol A\right] \left.\boldsymbol h^b\right|_{\partial V} \right)(a)\\
\nonumber &=\sum_{b\in \partial V} \rho(b) \left(\boldsymbol B \left.\boldsymbol h^b\right|_{V^0}\right) (a) \\
\nonumber &=\sum_{b\in \partial V} \rho(b) \sum_{x\in V^0} p(a,x) h^b(x) = \sum_{b\in \partial V} \frac{\hat{c}_{a,b}}{c_a} \rho(b)
\end{align*}
for each $a\in \partial V$. Combining these yields the Dirichlet boundary condition in (\ref{ODErho2}).
\end{proof}

We now show that the one-site marginal $\rho$ of $\mu_{\rm inv}$ is bounded away from $0$ and $1$.

\begin{corollary}
\label{cor:boundaryrate}
Under Condition (\nameref{ass:boundaryrate}), we have 
$\frac{1}{1+\gamma} \leq \rho(x) \leq \frac{\gamma}{1+\gamma}$ for all $x\in V(\Gamma)$, where $\gamma\geq 1$ is as in Assumption \ref{ass:boundaryrate}.
\end{corollary}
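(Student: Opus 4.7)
My plan is to deduce the bounds from the maximum principle applied to the system \eqref{ODErho}, using the Robin-type boundary condition to convert Condition~(\nameref{ass:boundaryrate}) into quantitative bounds at extremal points.

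First, I would note that the first equation of \eqref{ODErho} says $\rho$ is discrete-harmonic on $V(\Gamma) \setminus \partial V$. Hence, by the (weak) maximum principle on the finite connected graph $\Gamma$, both $\max_{x \in V(\Gamma)} \rho(x)$ and $\min_{x \in V(\Gamma)} \rho(x)$ are attained at some vertices in $\partial V$; call these $a^* \in \partial V$ and $a_* \in \partial V$ respectively. (Strictly speaking, if $\Gamma$ is infinite one applies this to each $\Gamma_N$; since the graph here is the generic $(G,\mathbf{c})$ of \S\ref{sec:mainboundary}, I will assume the finite setting of the corollary applies.)

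Next, I would evaluate the boundary equation in \eqref{ODErho} at $a^*$. Since $\rho(a^*) \geq \rho(y)$ for every $y \in V(\Gamma) \setminus \partial V$, the left-hand side $\sum_{y \in V(\Gamma)\setminus\partial V} c_{a^* y}[\rho(a^*) - \rho(y)]$ is nonnegative. Therefore
\begin{equation*}
\lambda_+(a^*)[1 - \rho(a^*)] - \lambda_-(a^*)\rho(a^*) \geq 0,
\end{equation*}
which rearranges to $\rho(a^*) \leq \frac{\lambda_+(a^*)}{\lambda_+(a^*) + \lambda_-(a^*)} = \frac{1}{1 + \lambda_-(a^*)/\lambda_+(a^*)}$. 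Since Condition (\nameref{ass:boundaryrate}) gives $\lambda_+(a^*)/\lambda_-(a^*) \leq \gamma$, i.e.\ $\lambda_-(a^*)/\lambda_+(a^*) \geq 1/\gamma$, one concludes
\begin{equation*}
\max_{x \in V(\Gamma)} \rho(x) = \rho(a^*) \leq \frac{1}{1 + 1/\gamma} = \frac{\gamma}{1+\gamma}.
\end{equation*}

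The lower bound is symmetric: at $a_*$ the LHS of the boundary equation is nonpositive, giving $\rho(a_*) \geq \frac{\lambda_+(a_*)}{\lambda_+(a_*)+\lambda_-(a_*)}$, and $\lambda_+(a_*)/\lambda_-(a_*) \geq 1/\gamma$ yields $\rho(a_*) \geq \frac{1}{1+\gamma}$. Combined, we get $\frac{1}{1+\gamma} \leq \rho(x) \leq \frac{\gamma}{1+\gamma}$ for all $x$.

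There is no real obstacle here; the only subtlety is making sure the maximum principle is applied correctly, namely that extrema of a function which is harmonic on $V(\Gamma) \setminus \partial V$ on a finite connected graph must occur on $\partial V$ (or the function is constant, in which case the bounds follow trivially by applying the same boundary-equation argument at any single $a \in \partial V$). Everything else is an algebraic rearrangement using the sign of the interior-flow term plus Condition~(\nameref{ass:boundaryrate}).
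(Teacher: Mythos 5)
Your proof is correct, but it takes a genuinely different and more elementary route than the paper. The paper first passes through the Dirichlet-to-Neumann reformulation (Proposition \ref{prop:DtoN}), rewrites the boundary condition of \eqref{ODErho2} as $\tilde{\bf P}\,\boldsymbol\rho|_{\partial V} = ({\bf I}+\boldsymbol\Lambda_\Sigma)\boldsymbol\rho|_{\partial V}-\boldsymbol\Lambda_+$, and argues by contradiction at a near-extremal boundary vertex, invoking both parts of Condition (BR) (the ratio bound via $\gamma$ \emph{and} the bound $\lambda_+(a)/c_a \leq \gamma'$), before extending to the interior by the maximum principle. You instead work directly with the Robin system \eqref{ODErho}: the maximum principle locates the global extrema of $\rho$ on $\partial V$, the sign of the interior-flow term at an extremal boundary vertex gives $\rho(a^*) \leq \frac{\lambda_+(a^*)}{\lambda_+(a^*)+\lambda_-(a^*)}$ (and the reverse inequality at $a_*$), and only the ratio bound $\gamma^{-1}\leq \lambda_+/\lambda_-\leq\gamma$ is needed; the constant $\gamma'$ plays no role. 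Two things you handle correctly and should keep explicit: the sum in the boundary equation runs only over interior neighbors, which is harmless here since $c_{aa'}=0$ for $a,a'\in\partial V$ (so extremality over all of $V$ still gives the sign), and the extrema of a function harmonic on $V\setminus\partial V$ on a finite connected graph are attained on $\partial V$ (with the constant case treated separately, as you do). Your argument buys simplicity and a slightly weaker hypothesis for this particular statement; the paper's route is natural in context because the trace/Dirichlet-to-Neumann machinery is set up anyway for Remark \ref{rem:trace} and Assumption \ref{ass:boundaryscaling}, but it is not needed for the corollary itself.
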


\begin{proof}
It suffices to prove the bounds for all $x\in \partial V$. Since $\rho$ is harmonic on $V^0$, the bounds also hold for all $x\in V^0$ as a consequence of the maximum principle for harmonic functions.

We begin with the boundary condition in \eqref{ODErho2}, which can be rewritten as
\begin{align}
\label{P=I+Lambda}
\tilde{\bf P} \left.\boldsymbol \rho\right|_{\partial V} = \left({\bf I} + \boldsymbol\Lambda_{\Sigma}\right) \left.\boldsymbol \rho\right|_{\partial V} - \boldsymbol\Lambda_+,
\end{align}
where $\tilde{\bf P}$ is a $|\partial V|\times |\partial V|$ matrix with entries $\tilde{p}(a,b)$, $\boldsymbol\Lambda_\Sigma$ is a $|\partial V|\times |\partial V|$ diagonal matrix with diagonal entries $c_a^{-1} (\lambda_+(a) + \lambda_-(a))$, and $\boldsymbol\Lambda_+$ is a $|\partial V|$-dimensional column vector with entries $c_a^{-1} \lambda_+(a)$. 

Let $\gamma, \gamma' \in [1,\infty)$ be as in Condition (\nameref{ass:boundaryrate}). We prove the lower bound $\rho(a) \geq \frac{1}{1+\gamma}$ for all $a\in \partial V$. Suppose, on the contrary, that $\inf_{a\in\partial V} \rho(a) = \kappa :=\frac{1}{1+\gamma} - \delta$ for some $\delta>0$. Since $\tilde{\bf P}$ is a stochastic matrix,  each component of the vector $\tilde{\bf P} \left.\boldsymbol \rho\right|_{\partial V}$ is a convex combination of $\{\rho(a): a\in \partial V\}$, and thus is $\geq \kappa$. Hence
\[
\inf_{a\in \partial V} \tilde{\bf P} \left.\boldsymbol \rho\right|_{\partial V} (a) \geq \kappa. 
\]
Meanwhile, for every $\epsilon>0$ there exists $a^* \in \partial V$ such that $\rho(a^*) < \kappa +\epsilon$, so
\begin{align*}
\left(\left({\bf I} + \boldsymbol\Lambda_{\Sigma}\right) \left.\boldsymbol \rho\right|_{\partial V} - \boldsymbol\Lambda_+\right) (a^*) &= \left(1+ c_{a^*}^{-1} (\lambda_+(a^*) + \lambda_-(a^*))\right) \rho(a^*) - c_{a^*}^{-1} \lambda_+(a^*)\\
&< (\kappa+\epsilon) + c_{a^*}^{-1} \left((\kappa+\epsilon-1)\lambda_+(a^*) + (\kappa+\epsilon) \lambda_-(a^*) \right).
\end{align*}
Using the lower bounds in Condition (\nameref{ass:boundaryrate}) we see that the last line is less than
\begin{align*}
(\kappa+\epsilon) + \gamma' \left( (1+\gamma)(\kappa+\epsilon)-1\right) &< \kappa+\gamma'(1+\gamma)\epsilon -\gamma'(1+\gamma)\delta.
\end{align*}
Since $\epsilon>0$ is arbitrary we deduce that 
\[
\inf_{a\in \partial V}\left(\left({\bf I} + \boldsymbol\Lambda_{\Sigma}\right) \left.\boldsymbol \rho\right|_{\partial V} - \boldsymbol\Lambda_+\right) (a) \leq \kappa-\gamma'(1+\gamma)\delta < \kappa,
\]
which leads to a contradiction in light of the identity \eqref{P=I+Lambda}.

The proof of the upper bound $\rho(a) \leq \frac{\gamma}{1+\gamma}$ for all $a\in \partial V$ is very similar, and thus omitted.
\end{proof}

Following \cite{BDGJL03}, we introduce a ``mock'' product Bernoulli measure $\nu_\lambda$ on $\{0,1\}^{V(G)}$ whose marginal is $\rho$, \emph{i.e.,} $\nu_\lambda\{\eta: \eta(x)=1\} = \rho(x)$ for each $x\in V(G)$. Corollary \ref{cor:boundaryrate} implies that $\nu_\lambda$ is absolutely continuous w.r.t.\@ any constant-density product Bernoulli measure $\nu_\alpha$, $\alpha\in (0,1)$, on $\{0,1\}^{V(G)}$. As a result, we can transfer the preceding theorems (the moving particle lemma and the one-block and two-block estimates) from the measure $\nu_\alpha$ to $\nu_\lambda$, modulo an issue of reversibility of $\nu_\lambda$ w.r.t.\@ the generators $\mathcal{L}^{\rm EX}_{(G,{\bf c})}$ and $\mathcal{L}^{b}_{\partial V}$. This is addressed in the next subsection.

\subsection{The moving particle lemma for the boundary-driven exclusion process} \label{sec:boundaryMPL}

In this subsection we prove the boundary-driven version of of Proposition \ref{prop:MPL}.

\begin{proposition}
\label{prop:boundaryMPL}
Suppose condition (\nameref{ass:boundaryrate}) holds. Then for all probability densities $f$ w.r.t.\@ $\nu_\lambda$, all $\alpha\in (0,1)$, and all $x,y\in V(G)$, we have
\begin{align}
\label{boundaryMPL}
& \quad \nu_\alpha\left[\sqrt{f \frac{d\nu_\lambda}{d\nu_\alpha}} \left(-\nabla_{xy}\sqrt{f\frac{d\nu_\lambda}{d\nu_\alpha}}\right)\right] \\ 
\nonumber & \leq  2 R^{(G,{\bf c})}_{\rm eff}(x,y) \left(\nu_\lambda\left[\sqrt f(-\mathcal{L}^{\rm EX}_{(G,{\bf c})}\sqrt f)\right] + \frac{1}{2}(\delta^{-1}-1)\left( \frac{1}{2\delta^2} \sum_{a\in \partial V} |i_\rho(a)| + \frac{1}{\delta^3} \mathcal{E}^{\rm el}_{(G,{\bf c})} (\rho) \right) \right),
\end{align}
where $\delta :=\frac{1}{1+\gamma} \in (0, \frac{1}{2}]$, and $i_\rho(a)$ is the electric flow out of $a\in \partial V$, \emph{cf.\@} \eqref{eq:flow}.
\end{proposition}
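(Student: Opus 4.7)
The plan is to absorb the Radon–Nikodym derivative $\psi := d\nu_\lambda/d\nu_\alpha$ (well-defined and bounded for any $\alpha \in (0,1)$ thanks to Corollary~\ref{cor:boundaryrate}) into the test function so as to invoke the classical, reversible moving particle lemma, and then to account for the two ``defects'' that arise because $\nu_\lambda$ fails to be reversible for $\mathcal{L}^{\rm EX}_{(G,{\bf c})}$. Setting $g := \sqrt{f\psi}$, Proposition~\ref{prop:MPL} applied to $g$ with respect to $\nu_\alpha$ gives
\begin{equation*}
\nu_\alpha[g(-\nabla_{xy}g)] \leq R_{\rm eff}^{(G,{\bf c})}(x,y) \cdot \tfrac{1}{2} \sum_{x'y'\in E(G)} c_{x'y'}\, \nu_\alpha[(\nabla_{x'y'}g)^2].
\end{equation*}

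To expose the $\nu_\lambda$-inner product of $\sqrt f$ on the right, I would use the additive split
$\nabla_{x'y'}\sqrt{f\psi} = \sqrt{\psi(\eta^{x'y'})}\,\nabla_{x'y'}\sqrt{f} + \sqrt{f(\eta)}\,\nabla_{x'y'}\sqrt{\psi}$
followed by $(a+b)^2 \leq 2a^2 + 2b^2$. Since the involution $\eta \mapsto \eta^{x'y'}$ preserves $\nu_\alpha$ and fixes $(\nabla_{x'y'}\sqrt f)^2$, the $\nu_\alpha$-expectation of the first piece equals $\nu_\lambda[(\nabla_{x'y'}\sqrt f)^2]$. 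A short algebraic expansion of the non-reversible form $\nu_\lambda[\sqrt f(-\mathcal{L}^{\rm EX}_{(G,{\bf c})}\sqrt f)]$ then produces the identity
\begin{equation*}
\tfrac{1}{2} \sum_{x'y' \in E(G)} c_{x'y'}\, \nu_\lambda[(\nabla_{x'y'}\sqrt f)^2] = \nu_\lambda\bigl[\sqrt f(-\mathcal{L}^{\rm EX}_{(G,{\bf c})}\sqrt f)\bigr] + \tfrac{1}{2}\sum_{x'y'\in E(G)} c_{x'y'}\,\nu_\alpha[f\,\nabla_{x'y'}\psi].
\end{equation*}
Combining these steps reduces the proof to bounding two defects: the linear defect $\mathcal{D}_1 := \sum_{x'y'} c_{x'y'}\,\nu_\alpha[f\,\nabla_{x'y'}\psi]$ and the quadratic defect $\mathcal{D}_2 := \sum_{x'y'} c_{x'y'}\,\nu_\alpha[f\,(\nabla_{x'y'}\sqrt\psi)^2]$.

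For both defects I would use the explicit product-Bernoulli form of $\psi$: when $\eta(x') \neq \eta(y')$, a direct computation yields $\nabla_{x'y'}\psi(\eta) = \sigma\, \psi(\eta)\,[\rho(y')-\rho(x')]/[\rho(u)(1-\rho(v))]$, for an appropriate sign $\sigma \in \{\pm 1\}$ and $u,v\in\{x',y'\}$ determined by which site is occupied; when $\eta(x')=\eta(y')$ the gradient vanishes. For $\mathcal{D}_1$, the plan is to first sum the $\eta$-dependent factors by conditioning on $(\eta(x'),\eta(y'))$ and then swap the order of summation, recasting $\mathcal{D}_1$ as a discrete divergence $\sum_{x'} M(x') \sum_{y' : x'y' \in E(G)} c_{x'y'} [\rho(y')-\rho(x')]$ for a bounded, $f$-dependent multiplier $M$ whose sup-norm is controlled by the uniform bounds $\delta \leq \rho \leq 1-\delta$ of Corollary~\ref{cor:boundaryrate}; the harmonicity of $\rho$ on $V(G)\setminus\partial V$ from Proposition~\ref{prop:DtoN} then collapses all interior terms, leaving only boundary contributions $\sum_{a\in\partial V} M(a)\,i_\rho(a)$, which are bounded by a $\delta$-dependent constant times $\sum_{a\in\partial V}|i_\rho(a)|$. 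For $\mathcal{D}_2$, the elementary estimate $(\sqrt a - \sqrt b)^2 \leq (a-b)^2/(4(a\wedge b))$ combined with the explicit formula for $\nabla_{x'y'}\psi$ yields a pointwise bound $(\nabla_{x'y'}\sqrt\psi)^2 \leq C(\delta)\,(\psi(\eta) \wedge \psi(\eta^{x'y'}))\cdot [\rho(x')-\rho(y')]^2$; using $\nu_\alpha[f\psi] = \nu_\lambda[f] = 1$ to absorb the $\psi$-factor and $\delta \leq \rho \leq 1-\delta$ to absorb the remaining constants, the sum over edges collapses directly to a $\delta$-dependent multiple of $\mathcal{E}^{\rm el}_{(G,{\bf c})}(\rho)$.

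The hard part is the combinatorial bookkeeping in $\mathcal{D}_1$: one must rearrange the double sum over edges and configurations so that the harmonicity of $\rho$ visibly cancels the interior divergence, despite the non-symmetric denominators $\rho(u)(1-\rho(v))$ and the non-local weight $f(\eta)$. Once this cancellation is carried out and the uniform bounds from Corollary~\ref{cor:boundaryrate} are applied to control all Bernoulli ratios, the specific constants $(\delta^{-1}-1)/(4\delta^2)$ and $(\delta^{-1}-1)/(2\delta^3)$ in the statement emerge from straightforward tracking; assembling the pieces inside the MPL estimate of the first paragraph produces the claimed inequality with overall factor $2R_{\rm eff}^{(G,{\bf c})}(x,y)$.
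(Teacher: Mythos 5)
Your overall architecture is the same as the paper's: absorb $\psi=d\nu_\lambda/d\nu_\alpha$ into the test function, apply the reversible moving particle lemma (Proposition \ref{prop:MPL}) to $g=\sqrt{f\psi}$ under $\nu_\alpha$, and then pay for the non-reversibility of $\nu_\lambda$ through two defects. Your identity relating $\tfrac12\sum c_{x'y'}\nu_\lambda[(\nabla_{x'y'}\sqrt f)^2]$ to $\nu_\lambda[\sqrt f(-\mathcal{L}^{\rm EX}_{(G,{\bf c})}\sqrt f)]$ is exactly the paper's \eqref{idbreak} (Lemma \ref{lem:asymppos}), up to a sign on the defect which is harmless since you bound it in absolute value, and your $\mathcal{D}_2$ step is the paper's Lemma \ref{lem:timechange}; your treatment of $\mathcal{D}_2$ via $(\sqrt a-\sqrt b)^2\leq (a-b)^2/(4(a\wedge b))$ and $\nu_\alpha[f\psi]=1$ is sound (the paper instead routes this term back through Lemma \ref{lem:RDPT}, so your constants will differ, and your claim that the exact constants of the statement ``emerge from straightforward tracking'' is not substantiated — though the precise constants are not what matters for the applications).

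The genuine gap is in your treatment of $\mathcal{D}_1$. Writing it out, $\mathcal{D}_1=\sum_{x'y'\in E}c_{x'y'}\,\nu_\lambda\bigl[f\bigl(\eta(x')(1-\eta(y'))\tfrac{\rho(y')-\rho(x')}{\rho(x')(1-\rho(y'))}+\eta(y')(1-\eta(x'))\tfrac{\rho(x')-\rho(y')}{\rho(y')(1-\rho(x'))}\bigr)\bigr]$, and the weights $\nu_\lambda[f\,\eta(x')(1-\eta(y'))]$ as well as the denominators $\rho(x')(1-\rho(y'))$ depend irreducibly on the \emph{pair} $(x',y')$. Hence $\mathcal{D}_1$ cannot be recast as $\sum_{x'}M(x')\sum_{y'\sim x'}c_{x'y'}[\rho(y')-\rho(x')]$ with a vertex-indexed multiplier $M$, and harmonicity of $\rho$ does \emph{not} collapse everything to boundary flows. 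After the natural splittings (the analogue of \eqref{eq:hh} for the denominator, and $\eta(x')(1-\eta(y'))=\eta(x')-\eta(x')\eta(y')$), harmonicity kills only the piece carrying the $x'$-only weight $\nu_\lambda[f\eta(x')]/(\rho(x')(1-\rho(x')))$; there remain a term quadratic in $\rho(y')-\rho(x')$ from the denominator correction and a symmetric cross term weighted by $\nu_\lambda[f\eta(x')\eta(y')]$, both of which are not of divergence form and must be absorbed into $\mathcal{E}^{\rm el}_{(G,{\bf c})}(\rho)$. This is precisely what the paper's Lemma \ref{lem:RDPT} does (its $J_{12}$ gives the flow term, $J_{13}$ the energy term, and $J_2$ is handled by sign), after first pulling $\sup_\eta$ out so that $f$ integrates to one and the cancellation can be done configuration by configuration. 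So your plan for $\mathcal{D}_1$, as stated (``only boundary contributions''), would not close; it is repairable by carrying the two leftover quadratic pieces into the energy term, which yields an inequality of the claimed form, though with different $\delta$-dependent constants than those you assert.
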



Observe that the last two terms on the RHS of \eqref{boundaryMPL}, as well as the effective resistance, are quantities associated with the random walk process on $(G,{\bf c})$. 

The key difference between Proposition \ref{prop:boundaryMPL} and Proposition \ref{prop:MPL} is as follows. While $\nu_\alpha$ is reversible for $\mathcal{L}^{\rm EX}_{(G,{\bf c})}$, $\nu_\lambda$ is \emph{not} reversible for $\mathcal{L}^{\rm EX}_{(G,{\bf c})}$ (though it is reversible for $\mathcal{L}^b_{\partial V}$). So while $\nu_\alpha\left[f\left(-\mathcal{L}^{\rm EX}_{(G,{\bf c})} f\right)\right]$ is always nonnegative, there is no reason for $\nu_\lambda\left[f\left(-\mathcal{L}^{\rm EX}_{(G,{\bf c})} f \right)\right]$ to be nonnegative. That said, we can estimate the difference between $\nu_\lambda\left[f\left(-\mathcal{L}^{\rm EX}_{(G,{\bf c})} f \right)\right]$ and the nonnegative quantity $\sum_{xy\in E(G)} c_{xy} \nu_\lambda\left[(\nabla_{xy} f)^2\right]$. The following result is a generalization of \cite{BDGJL03}*{Lemma 3.2} to weighted graphs.

\begin{lemma}
\label{lem:asymppos}
For any product Bernoulli measure $\nu$ on $\{0,1\}^{V(G)}$, and every $f\in L^2(\nu)$,
\begin{align}
\label{ineq:asymppos}
\nu\left[f\left(-\mathcal{L}^{\rm EX}_{(G,{\bf c})} f\right)\right] \geq \frac{1}{2} \sum_{xy\in E(G)} c_{xy} \nu\left[\left(\nabla_{xy} f\right)^2\right]-\frac{1}{2}\sup_{\eta\in \{0,1\}^{V(G)}} \left|\sum_{xy\in E(G)} \left(\frac{d\nu(\eta^{xy})}{d\nu(\eta)}-1\right) \right| \cdot \nu\left[f^2\right] .
\end{align}
\end{lemma}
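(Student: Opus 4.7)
The plan is to decompose $\nu[f(-\mathcal{L}^{\rm EX}_{(G,{\bf c})}f)]$ into a manifestly nonnegative quadratic form plus a correction term that quantifies the failure of $\nu$ to be reversible for $\mathcal{L}^{\rm EX}_{(G,{\bf c})}$, and then to bound the correction in terms of the pointwise Radon--Nikodym defect $\tfrac{d\nu(\eta^{xy})}{d\nu(\eta)}-1$ associated with swapping spins across an edge. When $\nu$ is actually reversible for every edge exchange, this defect vanishes identically and one recovers the standard sum-of-squares identity.

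First I would apply the elementary pointwise identity
\[
f(\eta)\,\nabla_{xy}f(\eta) \;=\; -\tfrac{1}{2}(\nabla_{xy}f)^2(\eta) \;+\; \tfrac{1}{2}\bigl(f(\eta^{xy})^2 - f(\eta)^2\bigr),
\]
multiply by $c_{xy}$, sum over $xy\in E(G)$, and integrate against $\nu$. This yields
\[
\nu\bigl[f(-\mathcal{L}^{\rm EX}_{(G,{\bf c})}f)\bigr] \;=\; \tfrac{1}{2}\sum_{xy\in E(G)} c_{xy}\,\nu[(\nabla_{xy}f)^2] \;-\; \tfrac{1}{2}\sum_{xy\in E(G)} c_{xy}\,\nu[f(\eta^{xy})^2 - f(\eta)^2],
\]
so the first sum already gives the required Dirichlet-form piece. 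For the second, I would use that $\eta\mapsto\eta^{xy}$ is an involution on the finite set $\{0,1\}^{V(G)}$ and (because $\nu$ is positive on every configuration, as guaranteed by Condition (\nameref{ass:boundaryrate}) via Corollary \ref{cor:boundaryrate}) perform the change of variables to rewrite $\nu[f(\eta^{xy})^2] = \int f(\eta)^2\,\tfrac{d\nu(\eta^{xy})}{d\nu(\eta)}\,d\nu(\eta)$. Substituting and swapping sum with integral reduces the error to $-\tfrac{1}{2}\int f(\eta)^2 \bigl[\sum_{xy\in E(G)} c_{xy}(\tfrac{d\nu(\eta^{xy})}{d\nu(\eta)}-1)\bigr]\,d\nu(\eta)$, which is bounded below by $-\tfrac{1}{2}\sup_\eta|\,\cdot\,|\cdot\nu[f^2]$ by the trivial pointwise estimate.

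The argument is essentially algebraic; the only delicate points are sign bookkeeping and justifying the change of variables on a finite state space. The conceptual content, which is what makes the lemma useful in the next subsection, is that the reversibility defect $\tfrac{d\nu(\eta^{xy})}{d\nu(\eta)}-1$ vanishes exactly when $\nu$ is reversible for the transposition at edge $xy$. For the nonequilibrium $\nu=\nu_\lambda$ with marginals $\rho$, a short explicit calculation expresses this defect in terms of $\rho(x)-\rho(y)$ on occupied/empty patterns, which is what will eventually allow Proposition \ref{prop:boundaryMPL} to control the correction by the boundary flow $\sum_{a\in \partial V}|i_\rho(a)|$ and the Dirichlet energy $\mathcal{E}^{\rm el}_{(G,{\bf c})}(\rho)$.
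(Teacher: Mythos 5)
Your proposal is correct and follows essentially the same route as the paper's proof: the pointwise sum-of-squares identity for $f\,\nabla_{xy}f$, the change of variables $\eta\mapsto\eta^{xy}$ using the product structure of $\nu$ to produce the factor $\tfrac{d\nu(\eta^{xy})}{d\nu(\eta)}-1$, and the supremum/H\"older bound on the correction term. (Your sign bookkeeping for the correction term is in fact the accurate one, and since the final step bounds its absolute value, the conclusion is the same; also note the positivity of the marginals is needed for any product Bernoulli $\nu$ here, not specifically via Corollary \ref{cor:boundaryrate}.)
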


\begin{proof}
It is straightforward to verify the identity
\begin{align}
\label{idbreak}
\nu\left[f\left(-\mathcal{L}^{\rm EX}_{(G,{\bf c})} f\right)\right] = \frac{1}{2} \sum_{xy\in E(G)} c_{xy} \nu\left[(\nabla_{xy} f)^2\right] + \frac{1}{2} \sum_{xy\in E(G)}c_{xy} \int\,\left([f(\eta^{xy})]^2 - [f(\eta)]^2\right)\, d\nu(\eta).
\end{align}
Since $\nu$ is a product Bernoulli measure, we can rewrite the intgral in the second term on the RHS as
\[
\int\, [f(\eta)]^2 \left(\frac{d\nu(\eta^{xy})}{d\nu(\eta)}-1\right)\,d\nu(\eta).
\]
Upon interchanging the summation and the integration and applying H\"{o}lder's inequality, we see that
\begin{align*}
\left|\sum_{xy\in E(G)} c_{xy}\int\, [f(\eta)]^2 \left(\frac{d\nu(\eta^{xy})}{d\nu(\eta)}-1\right)\,d\nu(\eta)\right| \leq \sup_{\eta\in \{0,1\}^{V(G)}} \left|\sum_{xy\in E(G)} c_{xy} \left(\frac{d\nu(\eta^{xy})}{d\nu(\eta)}-1\right) \right| \cdot \nu\left[f^2\right].
\end{align*}
The lemma follows from this and an application of the triangle inequality to \eqref{idbreak}.
\end{proof}

We will apply Lemma \ref{lem:asymppos} to the product Bernoulli measure $\nu_\lambda$. To prove the local ergodic theorem, we need to control the supremum term on the RHS of \eqref{ineq:asymppos}, using properties of discrete harmonic functions.

\begin{lemma}
\label{lem:RDPT}
Let $\delta \in (0, \frac{1}{2}]$, and $h: V(G)\to[\delta,1-\delta]$ be harmonic on $V(G)\setminus \partial V$. Denote by $\nu_h$ the product Bernoulli measure on $\{0,1\}^{V(G)}$ with marginals $\nu_h\{\eta: \eta(x)=1\} = h(x)$, $x\in V(G)$. Then
\begin{align}
\label{SigmaN}
\sup_{\eta\in \{0,1\}^{V(G)}} \left|\sum_{xy\in E(G)} c_{xy} \left(\frac{d\nu_h(\eta^{xy})}{d\nu_h(\eta)}-1 \right) \right| \leq \frac{1}{\delta^2} \sum_{a\in \partial V} |i_h(a)| + \frac{2}{\delta^3} \mathcal{E}^{\rm el}_{(G,{\bf c})}(h).
\end{align}
\end{lemma}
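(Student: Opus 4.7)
The plan is to expand $\frac{d\nu_h(\eta^{xy})}{d\nu_h(\eta)}-1$ explicitly and split it into a linear-in-$h(y)-h(x)$ piece, which the harmonicity of $h$ will collapse onto $\partial V$, and a quadratic remainder, which the Dirichlet energy $\mathcal{E}^{\rm el}_{(G,{\bf c})}(h)$ will absorb. Since $\nu_h$ is product Bernoulli with marginals $h$, the ratio depends only on the coordinates $\eta(x),\eta(y)$; a direct calculation gives the ratio $=1$ when $\eta(x)=\eta(y)$, and otherwise
\[
\frac{d\nu_h(\eta^{xy})}{d\nu_h(\eta)}-1 \;=\; \begin{cases} \dfrac{h(y)-h(x)}{h(x)(1-h(y))}, & \eta(x)=1,\ \eta(y)=0,\\[4pt] \dfrac{h(x)-h(y)}{h(y)(1-h(x))}, & \eta(x)=0,\ \eta(y)=1. \end{cases}
\]

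The key algebraic step is the identity
\[
\frac{h(y)-h(x)}{h(x)(1-h(y))} \;=\; \frac{h(y)-h(x)}{h(x)(1-h(x))} + \frac{(h(y)-h(x))^2}{h(x)(1-h(x))(1-h(y))},
\]
together with the symmetric version for the reversed orientation. The quadratic remainder is pointwise $\le (h(y)-h(x))^2/\delta^3$ thanks to $h,\,1-h\in[\delta,1-\delta]$, so after weighting by $c_{xy}$ and summing over $E(G)$ it contributes at most $\delta^{-3}\mathcal{E}^{\rm el}_{(G,{\bf c})}(h)$ in absolute value.

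For the linear piece I would use $\mathbf{1}_{\eta(x)=1,\eta(y)=0} = \eta(x)-\eta(x)\eta(y)$ and its mirror, then recombine the two orientations of each unordered edge. Setting $\phi_h(x) := [h(x)(1-h(x))]^{-1}$, the single-$\eta$ subsums collapse to
\[
\sum_{xy\in E(G)} c_{xy}\!\left[\eta(x)\,\frac{h(y)-h(x)}{h(x)(1-h(x))} + \eta(y)\,\frac{h(x)-h(y)}{h(y)(1-h(y))}\right] \;=\; \sum_{x\in V(G)} \eta(x)\,\phi_h(x)\!\!\!\sum_{y:\,xy\in E(G)}\!\!\! c_{xy}(h(y)-h(x)),
\]
and the inner sum equals $i_h(x)\,\mathbf{1}_{x\in\partial V}$ by harmonicity of $h$ on $V(G)\setminus\partial V$ and by \eqref{eq:flow} on $\partial V$. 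Since $\phi_h\le [\delta(1-\delta)]^{-1}\le \delta^{-2}$ (using $\delta\le 1/2$), this piece is bounded in modulus by $\delta^{-2}\sum_{a\in\partial V}|i_h(a)|$.

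The remaining $\eta(x)\eta(y)$ cross term, after the same recombination over orientations, reads
\[
\sum_{xy\in E(G)} c_{xy}\,\eta(x)\eta(y)\,(h(y)-h(x))[\phi_h(x)-\phi_h(y)] \;=\; \sum_{xy\in E(G)} c_{xy}\,\eta(x)\eta(y)\,\frac{(h(y)-h(x))^2(1-h(x)-h(y))}{h(x)(1-h(x))h(y)(1-h(y))},
\]
where the second equality follows from a direct computation of $\phi_h(x)-\phi_h(y)$. The elementary inequality $3\delta(1-\delta)\le 1$, which holds since $\delta(1-\delta)\le 1/4$, is equivalent to $(1-2\delta)\delta \le (1-\delta)^2$ and thus bounds the rational factor above by $\delta^{-3}$; hence this term contributes another $\delta^{-3}\mathcal{E}^{\rm el}_{(G,{\bf c})}(h)$. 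Summing the three bounds via the triangle inequality yields exactly the claimed $\delta^{-2}\sum_{a\in\partial V}|i_h(a)|+2\delta^{-3}\mathcal{E}^{\rm el}_{(G,{\bf c})}(h)$. The main obstacle I anticipate is pinpointing this precise Taylor-type split so that the harmonicity cancellation lands cleanly on the boundary-flow term with the cleanest constants; once that split is chosen everything else is elementary algebra on $[\delta,1-\delta]$.
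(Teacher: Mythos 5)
Your proposal is correct and arrives at exactly the stated constants, and its skeleton matches the paper's: compute the Radon--Nikod\'ym ratio explicitly, use precisely the identity \eqref{eq:hh} to peel off a linear-in-$h(y)-h(x)$ part whose vertex resummation vanishes off $\partial V$ by harmonicity and yields the flow term $\delta^{-2}\sum_{a\in\partial V}|i_h(a)|$, and control the quadratic pieces by $\delta^{-3}$ per edge using $h,1-h\ge\delta$. Where you genuinely differ is the bookkeeping of doubly occupied edges. The paper first splits into the single-$\eta$ sum $J_1$ (with the original denominators $h(x)[1-h(y)]$) and the $\eta(x)\eta(y)$ sum $J_2$, applies \eqref{eq:hh} only inside $J_1$ to get $0\le J_{13}\le 2\delta^{-3}\mathcal{E}^{\rm el}_{(G,{\bf c})}(h)$, and then disposes of $J_2$ via the sign observation $J_2\le 0$ together with the absorption $|J_{13}+J_2|\le J_{13}$. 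You instead apply \eqref{eq:hh} while the exclusion indicators $\mathbf{1}_{\eta(x)=1,\eta(y)=0}$, $\mathbf{1}_{\eta(x)=0,\eta(y)=1}$ are still in place, so each unordered edge carries at most one quadratic remainder (hence only $\delta^{-3}\mathcal{E}^{\rm el}_{(G,{\bf c})}(h)$ from that piece), and you then bound the leftover double-occupancy term directly through the computation of $\phi_h(x)-\phi_h(y)$ and the elementary inequality $3\delta(1-\delta)\le1$, which I checked gives the claimed $\delta^{-3}$ bound on the rational factor; the sign you drop on that cross term is immaterial since only its absolute value is used. Your route costs a bit more algebra but is self-contained at the cross-term step, which is a real advantage: the paper's absorption $|J_{13}+J_2|\le J_{13}$ amounts to $|J_2|\le 2J_{13}$, and per doubly occupied edge this is the condition $h(x)+h(y)\ge\tfrac12$, so for small $\delta$ it requires additional justification, whereas your two estimates need no cancellation between terms and sum immediately to the right-hand side of \eqref{SigmaN}.
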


\begin{proof}
A direct but tedious calculation shows that for each $xy\in E(G)$,
\begin{align*}
\sum_{xy\in E(G)} c_{xy}\left(\frac{d\nu_h(\eta^{xy})}{d\nu_h(\eta)}-1\right) =: J_1 + J_2,
\end{align*}
where
\begin{align*}
J_1 &= \sum_{xy\in E(G)} \left(\eta(x) c_{xy} \frac{h(y)-h(x)}{h(x)[1-h(y)]} + \eta(y) c_{xy} \frac{h(x)-h(y)}{h(y)[1-h(x)]}\right),\\
J_2 &= \sum_{xy\in E(G)} \eta(x) \eta(y) c_{xy} \left[h(x)-h(y)\right]\left(\frac{1}{h(x)[1-h(y)]} - \frac{1}{h(y)[1-h(x)]}\right).
\end{align*}
By replacing the sum over edges with the double sums over vertices, we can rewrite $J_1$ as
\begin{align}
\label{eq:J1}
J_1 = \sum_{x\in V(G)} \eta(x) \sum_{y\in V(G)} c_{xy} \frac{h(y)-h(x)}{h(x)[1-h(y)]}.
\end{align}
To simplify the expression further, we use the identity
\begin{align}
\label{eq:hh}
\frac{h(y)-h(x)}{h(x)[1-h(y)]} &= \frac{h(y)-h(x)}{h(x)[1-h(x)]} + \frac{h(y)-h(x)}{h(x)}\left(\frac{1}{1-h(y)}- \frac{1}{1-h(x)}\right)\\
\nonumber &= \frac{h(y)-h(x)}{h(x)[1-h(x)]} +  \frac{[h(y)-h(x)]^2}{h(x)[1-h(x)][1-h(y)]}.
\end{align}
Plug (\ref{eq:hh}) into (\ref{eq:J1}) to get
\begin{align}
\label{J1breakdown}
J_1 &= \sum_{x\in V(G)} \eta(x) \sum_{y\in V(G)} c_{xy} \left[ \frac{h(y)-h(x)}{h(x)[1-h(x)]} + \frac{[h(y)-h(x)]^2}{h(x)[1-h(x)][1-h(y)]}\right] \\
\nonumber &=\sum_{x\in V(G)\setminus\partial V} \frac{\eta(x)}{h(x)[1-h(x)]} \sum_{y\in V(G)} c_{xy} [h(y)-h(x)] \\
& \nonumber \qquad + \sum_{x\in \partial V} \frac{\eta(x)}{h(x)[1-h(x)]} \sum_{y\in V(G)} c_{xy}[h(y)-h(x)] \\
& \nonumber \qquad + \sum_{x\in V(G)} \eta(x) \sum_{y\in V(G)} c_{xy} \frac{[h(y)-h(x)]^2}{h(x)[1-h(x)][1-h(y)]}\\
\nonumber &=: J_{11} + J_{12}+ J_{13}.
\end{align}

Since $h$ is harmonic on $V(G)\setminus \partial V$, $\sum_{y\in V(G)} c_{xy}[h(y)-h(x)]=0$ for each $x\in V(G)\setminus\partial V$. It follows that $J_{11}=0$. For $J_{12}$ we recall the definition of electric flow $i_h(x) := \sum_{y\in V(G)} c_{xy} [h(y)-h(x)]$. Thus
\[
J_{12} =\sum_{a\in \partial V} \frac{\eta(a)}{h(a)[1-h(a)]} i_h(a).
\]
Using the triangle inequality and that $h\in [\delta, 1-\delta]$ we obtain the estimate
\begin{align}
\label{J12}
|J_{12}| \leq \frac{1}{\delta^2} \sum_{a\in \partial V}|i_h(a)|.
\end{align}

For $J_{13}$, we once again use $h\in [\delta,1-\delta]$ to obtain
\begin{align}
\label{J13}
0\leq J_{13} &\leq \sum_{x\in V(G)} \sum_{y\in V(G)} c_{xy} \frac{[h(y)-h(x)]^2}{h(x)[1-h(x)][1-h(y)]} \\ 
\nonumber &\leq \frac{1}{\delta^3} \sum_{x\in V(G)}\sum_{y\in V(G)} c_{xy} [h(y)-h(x)]^2 = \frac{2}{\delta^3} \mathcal{E}^{\rm el}_{(G,{\bf c})}(h).
\end{align}

For $J_2$, we note the identity
\begin{align*}
\left[h(x)-h(y)\right]\left(\frac{1}{h(x)[1-h(y)]} - \frac{1}{h(y)[1-h(x)]}\right) = - \frac{[h(x)-h(y)]^2}{h(x)h(y)[1-h(x)][1-h(y)]}.
\end{align*}
This implies that $J_2\leq 0$.

Combining $J_{11}=0$, $J_2<0$, \eqref{J12}, \eqref{J13}, and applying the triangle inequality, yields
\begin{align*}
\text{LHS of \eqref{SigmaN}} &= |J_{11} + J_{12}+ J_{13}+ J_2| \leq |J_{12}| + |J_{13}+J_2| \leq |J_{12}|+J_{13}\\
& \leq \frac{1}{\delta^2} \sum_{a\in \partial V}|i_h(a)| + \frac{2}{\delta^3} \mathcal{E}^{\rm el}_{(G,{\bf c})}(h).
\end{align*}
This proves Lemma \ref{lem:RDPT}.
\end{proof}

We need one additional change-of-measure formula.

\begin{lemma}
\label{lem:timechange}
Let $h: V(G)\to [\delta, 1-\delta]$ and $\nu_h$ be as in the statement of Lemma \ref{lem:RDPT}. For every $\alpha\in (0,1)$ and every probability density $f$ w.r.t.\@ $\nu_h$,
\begin{align}
\label{ineq:timechange}
\sum_{xy\in E(G)} c_{xy} \nu_h\left[(\nabla_{xy}\sqrt{f})^2\right] &\geq \frac{1}{2} \sum_{xy\in E(G)} c_{xy} \nu_\alpha\left[\left(\nabla_{xy} \sqrt{f\frac{d\nu_h}{d\nu_\alpha}}\right)^2\right] \\
\nonumber & \quad - (\delta^{-1}-2)  \left( \frac{1}{\delta^2}\sum_{a\in \partial V} |i_h(a)| + \frac{2}{\delta^3} \mathcal{E}^{\rm el}_{(G,{\bf c})}(h)\right).
\end{align}
\end{lemma}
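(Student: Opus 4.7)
The plan is to pass between the two Dirichlet-type forms by the change of variable $\sqrt f = g/\psi$, with $\psi := \sqrt{d\nu_h/d\nu_\alpha}$, so that $g$ is a density with respect to $\nu_\alpha$ (indeed $\nu_\alpha[g^2] = \nu_h[f] = 1$) and $d\nu_h = \psi^2\,d\nu_\alpha$. This is the standard device for transferring Dirichlet estimates between the product reference measure $\nu_\alpha$ (which is reversible for $\mathcal{L}^{\rm EX}_{(G,{\bf c})}$) and the non-reversible mock product $\nu_h$, in the spirit of \cite{BDGJL03}.

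The starting point is the Leibniz-type decomposition
\[
\nabla_{xy}g = \psi(\eta)\,\nabla_{xy}\sqrt f + \sqrt{f(\eta^{xy})}\,\nabla_{xy}\psi,
\]
obtained by adding and subtracting $\psi(\eta)\sqrt{f(\eta^{xy})}$ in $g(\eta^{xy}) - g(\eta)$. Applying $(A+B)^2 \leq 2A^2 + 2B^2$ pointwise, integrating against $\nu_\alpha$, absorbing $\psi^2\, d\nu_\alpha = d\nu_h$ in the first term, and summing over edges, one obtains
\[
\sum_{xy\in E(G)} c_{xy}\,\nu_\alpha[(\nabla_{xy}g)^2] \leq 2\sum_{xy\in E(G)} c_{xy}\,\nu_h[(\nabla_{xy}\sqrt f)^2] + 2\mathcal{R},
\]
where $\mathcal{R} := \sum_{xy\in E(G)} c_{xy}\,\nu_\alpha\!\left[f(\eta^{xy})(\nabla_{xy}\psi)^2\right]$. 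Rearranging gives the target inequality modulo the error $\mathcal{R}$.

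Next I would estimate $\mathcal{R}$. Using the $\nu_\alpha$-invariance of the swap $\eta\leftrightarrow\eta^{xy}$ together with the symmetry $(\nabla_{xy}\psi)^2(\eta^{xy}) = (\nabla_{xy}\psi)^2(\eta)$,
\[
\mathcal{R} = \sum_{xy\in E(G)} c_{xy}\,\nu_\alpha[f(\eta)(\nabla_{xy}\psi)^2] = \sum_{xy\in E(G)} c_{xy}\,\nu_h\!\left[f\,(\zeta_{xy}-1)^2\right],\qquad \zeta_{xy}(\eta) := \psi(\eta^{xy})/\psi(\eta).
\]
The only non-trivial case is $\eta(x)\neq\eta(y)$, in which $\zeta_{xy}^2$ equals $(1-h(x))h(y)/[h(x)(1-h(y))]$ (or its reciprocal), so $h\in[\delta,1-\delta]$ forces $\zeta_{xy}\in[\delta/(1-\delta),(1-\delta)/\delta]$ and in particular $|\zeta_{xy}-1|\leq \delta^{-1}-2$. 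Combined with the trivial $|\zeta-1|\leq|\zeta^2-1|$ (valid since $\zeta+1\geq 1$), this yields the sharp pointwise bound
\[
(\zeta_{xy}-1)^2 \leq (\delta^{-1}-2)\left|\frac{d\nu_h(\eta^{xy})}{d\nu_h(\eta)}-1\right|,
\]
so that by Fubini and $\nu_h[f]=1$,
\[
\mathcal{R} \leq (\delta^{-1}-2)\,\sup_\eta\,\sum_{xy\in E(G)} c_{xy}\left|\frac{d\nu_h(\eta^{xy})}{d\nu_h(\eta)}-1\right|.
\]

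The final step is to majorize the last supremum by $\frac{1}{\delta^2}\sum_{a\in\partial V}|i_h(a)|+\frac{2}{\delta^3}\mathcal{E}^{\rm el}_{(G,{\bf c})}(h)$ via Lemma \ref{lem:RDPT}. Here lies the main obstacle: Lemma \ref{lem:RDPT} as stated controls the absolute value of a \emph{signed} sum, $\sup_\eta\bigl|\sum_{xy}c_{xy}(d\nu_h(\eta^{xy})/d\nu_h(\eta)-1)\bigr|$, whereas I require the \emph{sum of absolute values}. Fortunately, the strengthening is essentially free from the proof of that lemma: the bulk contribution $J_{11}$ vanishes identically by harmonicity of $h$ on $V(G)\setminus\partial V$ and contributes nothing whether absolute values are present or not; the boundary term $J_{12}$ admits the pointwise majorization $\eta(a)|i_h(a)|/[h(a)(1-h(a))]\leq \delta^{-2}|i_h(a)|$ without any sign cancellation; and the quadratic correction $J_{13}$ is already sign-definite, while the non-positive $J_2$ can simply be discarded when absolute values are introduced inside the sum. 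Re-running the estimates of $J_{12}$ and $J_{13}$ edge-by-edge with $|h(y)-h(x)|$ in place of $h(y)-h(x)$ (a substitution that affects only $J_{11}$, which is identically zero), one obtains the desired sum-of-absolute-values bound, closing the argument.
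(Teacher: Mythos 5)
Your computation up to the error term is essentially the paper's own: the Leibniz-type splitting of $\nabla_{xy}\sqrt{f\,d\nu_h/d\nu_\alpha}$, the bound $(A+B)^2\le 2A^2+2B^2$, the swap-invariance of $\nu_\alpha$, and the identification of the error as $\mathcal{R}=\sum_{xy}c_{xy}\,\nu_h[f(\zeta_{xy}-1)^2]$ with $\zeta_{xy}^2=d\nu_h(\eta^{xy})/d\nu_h(\eta)$ reproduce \eqref{eq:string}--\eqref{eq:csq}. The genuine gap is in your last step. The strengthened, sum-of-absolute-values version of Lemma \ref{lem:RDPT} that you need,
\[
\sup_{\eta}\sum_{xy\in E(G)}c_{xy}\left|\frac{d\nu_h(\eta^{xy})}{d\nu_h(\eta)}-1\right|\;\le\;\frac{1}{\delta^2}\sum_{a\in\partial V}|i_h(a)|+\frac{2}{\delta^3}\mathcal{E}^{\rm el}_{(G,{\bf c})}(h),
\]
is false, and your justification misreads where the cancellation in that lemma comes from. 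The term $J_{11}$ vanishes because harmonicity kills the \emph{signed} sums $\sum_y c_{xy}[h(y)-h(x)]$ at interior vertices; once you replace the increments by $|h(y)-h(x)|$, this term no longer vanishes --- it becomes, up to the factor $1/(h(x)[1-h(x)])$, the $L^1$-gradient $\sum_{xy}c_{xy}|h(y)-h(x)|$, which is not controlled by boundary flows plus Dirichlet energy. Concretely, on the path $\{0,\dots,n\}$ with unit conductances, $\partial V=\{0,n\}$ and $h$ linear from $\delta$ to $1-\delta$, an alternating configuration $\eta$ makes the left side of order $1$, while $\sum_a|i_h(a)|$ and $\mathcal{E}^{\rm el}(h)$ are both $O(1/n)$. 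This matters downstream: Assumption \ref{ass:boundaryscaling} only controls $\frac{\mathcal{T}_N}{\mathcal{V}_N}$ times flows and energy, so an $L^1$-gradient error would diverge in exactly the scaling where the lemma is used.

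The step can be closed, but not by taking absolute values of the first-order terms: the point is that the per-edge error is \emph{quadratic} in the increments of $h$. For a discordant edge one has $\zeta_{xy}^2-1=\pm\frac{h(x)-h(y)}{h(\cdot)[1-h(\cdot)]}$, hence $|\zeta_{xy}-1|\le|\zeta_{xy}^2-1|\le\delta^{-2}|h(x)-h(y)|$ and therefore $(\zeta_{xy}-1)^2\le\delta^{-4}[h(x)-h(y)]^2$ pointwise; summing over edges and using $\nu_h[f]=1$ gives $\mathcal{R}\le\delta^{-4}\,\mathcal{E}^{\rm el}_{(G,{\bf c})}(h)$, which yields \eqref{ineq:timechange} up to a harmless change of constant (and without even needing the flow term at this step). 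Your intermediate bound $(\zeta-1)^2\le(\delta^{-1}-2)|\zeta^2-1|$ is precisely what loses a power of the gradient and forces you into the false $L^1$-type statement; note that the paper's factorization $(1-r)^2=(1-r^2)Y_h$ with $|Y_h|\le\delta^{-1}-2$, as cited against Lemma \ref{lem:RDPT}, should also be read in this quadratic way for the estimate to be airtight.
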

\begin{proof}
Let $g = f\frac{d\nu_h}{d\nu_\alpha} \in L^1(\nu_\alpha)$. Then
\begin{align}
\label{eq:string}
& \qquad \nu_\alpha\left[\left(\nabla_{xy} \sqrt{g}\right)^2\right] \\ 
\nonumber &= \int\, \left(\sqrt{g(\eta^{xy})}-\sqrt{g(\eta)}\right)^2 \frac{d\nu_\alpha}{d\nu_h}(\eta)\, d\nu_h(\eta) \\
\nonumber &= \int\, \left(\left(\sqrt{g\frac{d\nu_\alpha}{d\nu_h}(\eta^{xy})}-\sqrt{g\frac{d\nu_\alpha}{d\nu_h}(\eta)}\right) + \sqrt{g(\eta^{xy})}\left(\sqrt{\frac{d\nu_\alpha}{d\nu_h}(\eta)}-\sqrt{\frac{d\nu_\alpha}{d\nu_h}(\eta^{xy})} \right) \right)^2\, d\nu_h(\eta)\\
\nonumber &\leq 2 \int\, \left(\sqrt{g\frac{d\nu_\alpha}{d\nu_h}(\eta^{xy})} - \sqrt{g\frac{d\nu_\alpha}{d\nu_h}(\eta)}\right)^2\, d\nu_h(\eta) + 2\int\, g(\eta^{xy}) \left( \sqrt{\frac{d\nu_\alpha}{d\nu_h}(\eta)} - \sqrt{\frac{d\nu_\alpha}{d\nu_h}(\eta^{xy})}\right)^2\,d\nu_h(\eta) \\
\nonumber &= 2 \nu_h\left[\left(\nabla_{xy} \sqrt{f}\right)^2\right] + 2 \int\, g(\eta) \left( 1- \sqrt{\frac{d\nu_h(\eta^{xy})}{d\nu_h(\eta)} }\right)^2 \, d\nu_\alpha(\eta)
\end{align}
where in the last line, we applied to the second term a change of variables $\eta \to \eta^{xy}$, and used the fact that $\nu_\alpha$ charges the same measure to $\eta$ and to $\eta^{xy}$.

Next we estimate
\begin{align}
\label{eq:csq}
\sum_{xy \in E(G)} c_{xy} \left(1-\sqrt{\frac{d\nu_h(\eta^{xy})}{d\nu_h(\eta)}} \right)^2 = \sum_{xy\in E(G)} c_{xy} \left(1-\frac{d\nu_h(\eta^{xy})}{d\nu_h(\eta)}\right) Y_h(\eta, xy)
\end{align}
where
\[
Y_h(\eta,xy) := \frac{1-\sqrt{\frac{d\nu_h(\eta^{xy})}{d\nu_h(\eta)}}}{1+\sqrt{\frac{d\nu_h(\eta^{xy})}{d\nu_h(\eta)}}}.
\]
Since $h\in [\delta, 1-\delta]$ and $\delta\in (0,\frac{1}{2}]$,
\[
\frac{\delta^2}{(1-\delta)^2} \leq\frac{d\nu_h(\eta^{xy})}{d\nu_h(\eta)} \leq \frac{(1-\delta)^2}{\delta^2},
\]
which yields a uniform upper bound
\begin{align*}
 |Y_h(\eta,xy)| \leq \left|1-\sqrt{\frac{d\nu_h(\eta^{xy})}{d\nu_h(\eta)}}\right| \leq \delta^{-1}-2.
\end{align*}
This along with Lemma \ref{lem:RDPT} shows that \eqref{eq:csq} is bounded above by
\[
 (\delta^{-1}-2) \left( \frac{1}{\delta^2}\sum_{a\in \partial V} |i_h(a)| + \frac{2}{\delta^3} \mathcal{E}^{\rm el}_{(G,{\bf c})}(h)\right).
\]
Now combine this upper bound with the inequality \eqref{eq:string} to obtain
\begin{align*}
\sum_{xy\in E(G)} c_{xy} \nu_\alpha\left[(\nabla_{xy}\sqrt g)^2\right] &\leq 2 \sum_{xy\in E(G)} c_{xy} \nu_h\left[(\nabla_{xy}\sqrt f)^2\right] \\ & \quad +  2(\delta^{-1}-2) \left( \frac{1}{\delta^2}\sum_{a\in \partial V} |i_h(a)| + \frac{2}{\delta^3} \mathcal{E}^{\rm el}_{(G,{\bf c})}(h)\right).
\end{align*}
The lemma follows.
\end{proof}

\begin{proof}[Proof of Proposition \ref{prop:boundaryMPL}]
Let $f$ be a probability density relative to $\nu_\lambda$, whose one-site marginal $\nu_\lambda\{\eta: \eta(x)=1\}=\rho(x)$ is by Corollary \ref{cor:boundaryrate}.
Using Lemma \ref{lem:asymppos} (second line below), Lemmas  \ref{lem:RDPT} and \ref{lem:timechange} (third line), and Proposition \ref{prop:MPL} (fifth line), we obtain the estimate
\begin{align*}
& \quad \nu_\lambda\left[\sqrt f(-\mathcal{L}^{\rm EX}_{(G,{\bf c})} \sqrt f)\right] \\ 
\nonumber &\geq \frac{1}{2} \sum_{zw\in E(G)} c_{zw} \nu_\lambda\left[(\nabla_{zw} \sqrt{f})^2\right] - \frac{1}{2} \sup_{\eta\in \{0,1\}^{V(G)}}\left|\sum_{zw\in E(G)} \left(\frac{d\nu_\lambda(\eta^{zw})}{d\nu_\lambda (\eta)}-1\right)\right|\\
& \nonumber \geq \frac{1}{4} \sum_{zw\in E(G)} c_{zw} \nu_\alpha\left[\left(\nabla_{zw}\sqrt{f\frac{d\nu_\lambda}{d\nu_\alpha}}\right)^2\right] - \frac{1}{2} (\delta^{-1}-2)\left(\frac{1}{\delta^2} \sum_{a\in \partial V} |i_\rho(a)| + \frac{2}{\delta^3} \mathcal{E}^{\rm el}_{(G,{\bf c})} (\rho) \right) \\
& \nonumber \quad - \frac{1}{2}\left(\frac{1}{\delta^2} \sum_{a\in \partial V} |i_\rho(a)| + \frac{2}{\delta^3} \mathcal{E}^{\rm el}_{(G,{\bf c})} (\rho) \right) \\
& \nonumber \geq \frac{1}{4} \left(R_{\rm eff}^{(G,{\bf c})}(x,y)\right)^{-1} \nu_\alpha\left[\left(\nabla_{xy} \sqrt{f\frac{d\nu_\lambda}{d\nu_\alpha}}\right)^2 \right] - \frac{1}{2}(\delta^{-1}-1)\left(\frac{1}{\delta^2} \sum_{a\in \partial V} |i_\rho(a)| + \frac{2}{\delta^3} \mathcal{E}^{\rm el}_{(G,{\bf c})} (\rho) \right).
\end{align*}
Now apply the identity $\frac{1}{2}\nu_\alpha\left[(\nabla_{xy} g)^2\right] = \nu_\alpha\left[g(-\nabla_{xy} g) \right]$, $g\in L^2(\nu_\alpha)$, to the first term on the RHS. A simple algebraic manipulation yields \eqref{boundaryMPL}.
\end{proof}

\subsection{Finishing the proof} \label{sec:localergodicboundary}


\begin{proof}[Proof of Theorem \ref{thm:localergodicboundary}]
The proof is based on a variation of the methods in \S\ref{sec:1block} and \S\ref{sec:2blocks}. We will make all estimates w.r.t.\@ $\mathbb{P}^N_{\nu^N_\lambda}$, where the product Bernoulli measure $\nu^N_\lambda$ on $\{0,1\}^{V(\Gamma_N)}$ has one-site marginals $\nu^N_\lambda\{\eta: \eta(x)=1\} = \rho_N(x)$, $x\in V(\Gamma_N)$. At the end we transfer the estimates to $\mathbb{P}^N_{\eta_0^N}$.

\emph{One-block estimate, \emph{cf.\@} \S\ref{sec:1block}.} First, we have the analog of Lemma \ref{lem:DFEst1}: for every $\Lambda_1 \subset \Lambda_2 \subset V(\Gamma)$ and every $f\in L^2(\nu_\lambda^{\Lambda_2})$,
\begin{align}
\label{ineq:comp}
\frac{1}{2} \sum_{\substack{xy\in E(\Gamma)\\x,y\in \Lambda_1}}\nu_\lambda^{\Lambda_1}\left[(\nabla_{xy} f)^2 \right] \leq \frac{1}{2} \sum_{\substack{xy\in E(\Gamma)\\x,y\in \Lambda_2}}\nu_\lambda^{\Lambda_2}\left[(\nabla_{xy} f)^2\right].
\end{align}

Let $\left(\mathcal{L}^{\rm bEX}_{(\Gamma_N, {\bf c})}\right)^*$ (resp.\@ $\left(\mathcal{L}^{\rm EX}_{(\Gamma_N, {\bf c})}\right)^*$) be the adjoint of $\mathcal{L}^{\rm bEX}_{(\Gamma_N, {\bf c})}$ (resp.\@ $\mathcal{L}^{\rm EX}_{(\Gamma_N, {\bf c})}$) relative to the nonreversible measure $\nu_\lambda^N$. Observe that $\nu_\lambda^N$ is reversible for the symmetrized generator 
\begin{align*}
\frac{1}{2}\left(\mathcal{L}^{\rm bEX}_{(\Gamma_N,{\bf c})} + \left(\mathcal{L}^{\rm bEX}_{(\Gamma_N,{\bf c})}\right)^*\right) = \frac{1}{2} \left(\mathcal{L}^{\rm EX}_{(\Gamma_N, {\bf c})} + \left(\mathcal{L}^{\rm EX}_{(\Gamma_N, {\bf c})}\right)^*\right) + \mathcal{L}^b_{\partial V_N}.
\end{align*}

We modify Lemma \ref{lem:1beigv} as follows: for each $\kappa>0$, let $\lambda^{(1),\pm}_{N,j,\lambda}(\kappa)$ be the largest eigenvalue of 
\begin{align*}
&~\quad\frac{1}{2} \left( \left[\mathcal{T}_N \mathcal{L}^{\rm bEX}_{(\Gamma_N,{\bf c})} \pm \kappa \mathcal{V}_N \Uone{j}(\pt,\cdot)\right] + \left[\mathcal{T}_N \mathcal{L}^{\rm bEX}_{(\Gamma_N,{\bf c})} \pm \kappa \mathcal{V}_N \Uone{j}(\pt,\cdot)\right]^* \right) \\
\nonumber &= \frac{\mathcal{T}_N}{2} \left( \mathcal{L}^{\rm EX}_{(\Gamma_N,{\bf c})} + \left( \mathcal{L}^{\rm EX}_{(\Gamma_N,{\bf c})}\right)^*\right) + \mathcal{T}_N \mathcal{L}^b_{\partial V_N} \pm \kappa \mathcal{V}_N \Uone{j}(\pt,\cdot)
\end{align*}
 with respect to $\nu^N_\lambda$. 
 We claim that there exists a constant $c_\delta$ such that for all $\kappa>0$,
 \begin{align}
 \label{1beigv2}
 \limsup_{j\to\infty} \limsup_{N\to\infty}\frac{1}{\kappa \mathcal{V}_N} \lambda^{(1),\pm}_{N,j,\lambda}(\kappa) \leq \frac{c_\delta}{\kappa}.
 \end{align}
To see this, we again use the variational characterization
\begin{align}
\label{boundarysup}
\frac{1}{\kappa \mathcal{V}_N} \lambda^{(1),\pm}_{N,j,\lambda}(\kappa)&=\sup_f \left\{ \nu^N_\lambda\left[\pm \Uone{j}(\pt,\cdot)f\right]\right.\\
\nonumber & \qquad - \left.\frac{\mathcal{T}_N}{\kappa\mathcal{V}_N} \left(\nu^N_\lambda\left[\sqrt{f} \left(-\mathcal{L}^{\rm EX}_{(\Gamma_N,{\bf c})} \sqrt{f}\right) \right] + \nu_\lambda^N \left[ \sqrt{f} \left(-\mathcal{L}^b_{\partial V_N} \sqrt{f}\right)\right]\right) \right\},
\end{align}
where the supremum is taken over all densities relative to $\nu^N_\lambda$. 
Using \eqref{ineq:comp}, $\nu_\lambda^N \left[ \sqrt{f} \left(-\mathcal{L}^b_{\partial V_N} \sqrt{f}\right)\right]\geq 0$, and Lemmas \ref{lem:asymppos} and \ref{lem:RDPT}, we can bound (\ref{boundarysup}) from above by
\begin{align}
\label{b1b}
&\sup_f \left\{\nu^{\Lambda_j(x)}_\lambda\left[\pm \Uone{j}(x,\cdot)f\right] -  \frac{1}{2}\frac{\mathcal{T}_N}{\kappa \mathcal{V}_N} \sum_{\substack{zw\in E(\Gamma)\\ z,w\in \Lambda_j(x)}} c_{zw} \nu^{\Lambda_j(\pt)}_\lambda\left[\left(\nabla_{zw} f\right)^2\right]\right.\\
\nonumber & \qquad \qquad \qquad +\left.\frac{\mathcal{T}_N}{\kappa\mathcal{V}_N} \left(\frac{1}{2\delta^2} \sum_{a\in \partial V_N} |i_{\rho_N}(a)| + \frac{1}{\delta^3} \mathcal{E}^{\rm el}_{(\Gamma_N,{\bf c})}(\rho_N) \right)\right\},
\end{align}
where $f$ is any probability density w.r.t.\@ $\nu^{\Lambda_j(\pt)}_\lambda$.

Now take the limit of \eqref{b1b} as $N\to\infty$, and note that we can interchange the limit and the supremum by a compactness argument. By Assumption \ref{ass:1} (resp.\@ Assumption \ref{ass:boundaryscaling}), the second term (resp.\@ the third term) of \eqref{b1b} converges to $-\infty$ (resp.\@ $c_\delta/\kappa$ for a positive constant $c_\delta$). Thus
\begin{align*}
\limsup_{N\to\infty} \frac{1}{\kappa \mathcal{V}_N} \lambda^{(1),\pm}_{N,j,\lambda}(\kappa) \leq \sup_f \limsup_{N\to\infty}\left\{\nu_\lambda^{\Lambda_j(x)}\left[\pm \Uone{j}(\pt,\cdot)f\right] \right\} + \frac{c_\delta}{\kappa}.
\end{align*}
Next take the limit $j\to\infty$. The first term on the RHS vanishes according to the equivalence of ensembles argument, \emph{cf.\@} the end of the proof of Lemma \ref{lem:1beigv} followed by Lemma \ref{lem:ensembles}. Thus \eqref{1beigv2} is proved. 

From \eqref{1beigv2}, we use the exponential Chebyshev's inequality and the Feynman-Kac formula as in the proof of Theorem \ref{thm:1block} to complete the one-block estimate w.r.t.\@ $\mathbb{P}^N_{\nu^N_\lambda}$.

\emph{Two-blocks estimate, \emph{cf.\@} \S\ref{sec:2blocks}.} 
Let $\lambda^{(2),\pm}_{N,j,x,y,\lambda}(\kappa)$ be the largest eigenvalue of
\begin{align*}
\frac{1}{2}\left(\left[\mathcal{T}_N \mathcal{L}^{\rm bEX}_{(\Gamma_N,{\bf c})} \pm \kappa \mathcal{V}_N \Utildetwo{j}{x}{y}(\cdot)\right]+ \left[\mathcal{T}_N \mathcal{L}^{\rm bEX}_{(\Gamma_N,{\bf c})} \pm \kappa \mathcal{V}_N \Utildetwo{j}{x}{y}(\cdot)\right]^*\right).
\end{align*}
w.r.t.\@ $\nu_\lambda^N$. We claim that there exists a positive constant $c_{\delta}$ such that
\begin{align}
\label{b2b}
\limsup_{j\to\infty}\limsup_{\epsilon \downarrow 0}\limsup_{N\to\infty} \frac{1}{\kappa \mathcal{V}_N} \lambda^{(2),\pm}_{N,j,x,y,\lambda}(\kappa) \leq \frac{c_{\delta}}{\kappa}.
\end{align}
The proof of \eqref{b2b} follows nearly identically the proof presented in \S\ref{sec:2blocks}, but with one key change due to the nonreversibility of $\mathcal{L}^{\rm bEX}_{(\Gamma_N, {\bf c})}$ w.r.t.\@ $\nu_\lambda^N$. 
Using \eqref{ineq:comp} and Lemmas \ref{lem:asymppos}, \ref{lem:RDPT}, and \ref{lem:timechange}, we find that for every probability density $f$ w.r.t.\@ $\nu_\lambda$,
\begin{align*}
& \quad \nu_\alpha\left[\sqrt{f\frac{d\nu_\lambda}{d\nu_\alpha}}\left( -\left(\mathcal{L}^{\rm EX}_{(\Lambda_j(x),{\bf c})} + \mathcal{L}^{\rm EX}_{(\Lambda_j(y), {\bf c})}\right)\sqrt{f \frac{d\nu_\lambda}{d\nu_\alpha}}\right)\right]  \\
& \leq \frac{1}{2} \sum_{zw\in E(G)} c_{zw} \nu_\alpha\left[\left(\nabla_{zw}\sqrt{f\frac{d\nu_\lambda}{d\nu_\alpha}}\right)^2 \right]\\
& \leq 2\nu_\lambda\left[\sqrt{f}\left(-\mathcal{L}^{\rm EX}_{(G,{\bf c})} \sqrt{f}\right) \right] + (\delta^{-1}-1)\left(\frac{1}{\delta^2} \sum_{a\in \partial V} |i_\rho(a)| + \frac{2}{\delta^3} \mathcal{E}^{\rm el}_{(G,{\bf c})} (\rho) \right).
\end{align*}
Harkening back to the arguments leading up to \eqref{ineqmoving}, but using Proposition \ref{prop:boundaryMPL} instead of Proposition \ref{prop:MPL}, we obtain
\begin{align*}
\nu_\alpha&\left[\sqrt{f\frac{d\nu_\lambda}{d\nu_\alpha}}\left(-\mathcal{L}^{(2)}_{j,x,y}\sqrt{ f\frac{d\nu_\lambda}{d\nu_\alpha}}\right) \right] 
\leq 2\left(1+\sum_{i=0}^{B-1} R_{\rm eff}^{(\Gamma_N, {\bf c})}(z_i, z_{i+1})\right) \\
 & \qquad \times\left( \nu_\lambda\left[\sqrt{f}(-\mathcal{L}^{\rm EX}_{(\Gamma_N,{\bf c})} \sqrt{f})\right] +\frac{1}{2}(\delta^{-1}-1)\left(\frac{1}{\delta^2} \sum_{a\in \partial V} |i_\rho(a)| + \frac{2}{\delta^3} \mathcal{E}^{\rm el}_{(G,{\bf c})} (\rho) \right)\right).
\end{align*}
This yields the estimate
\begin{align}
\label{2bboundary}
\frac{1}{\kappa \mathcal{V}_N}\lambda^{(2),\pm}_{N,j,x,y,\lambda}(\kappa) & \leq \sup_f\left\{\nu^{\Lambda^{(2)}(j,x,y)}_\lambda\left[\pm \tilde{U}^{(2)}_{j,x,y}(\cdot)f\right]-\frac{K_1}{2\kappa} \right\} + \frac{K_2}{2\kappa},
\end{align}
where
\begin{align*}
K_1 &:= \frac{\mathcal{T}_N}{\mathcal{V}_N}\left(1+ \sum_{i=0}^{B-1} R_{\rm eff}^{(\Gamma_N, {\bf c})}(z_i, z_{i+1})\right)^{-1}\nu_\alpha^{\Lambda^{(2)}(j,x,y)}\left[\sqrt{f\frac{d\nu_\lambda}{d\nu_\alpha}}\left(-\mathcal{L}^{(2)}_{j,x,y} \sqrt{f\frac{d\nu_\lambda}{d\nu_\alpha}}\right)\right] ,\\
K_2 &:= (\delta^{-1}-1)\frac{\mathcal{T}_N}{\mathcal{V}_N}\left(\frac{1}{\delta^2} \sum_{a\in \partial V_N} |i_{\rho_N}(a)| + \frac{2}{\delta^3} \mathcal{E}^{\rm el}_{(G,{\bf c})} (\rho_N)\right),
\end{align*}
and the supremum runs over all probability densities w.r.t.\@ $\nu_\lambda^{\Lambda^{(2)}(j,x,y)}$.
Upon taking the limit $N\to\infty$ then $\epsilon\downarrow 0$ then $j\to\infty$ on \eqref{2bboundary}, the supremum on the RHS vanishes by the same argument presented in \S\ref{sec:2blocks}, while $K_2$ tends to a finite positive number $2 c_{\delta}$ by  Assumption \ref{ass:boundaryscaling}. This proves \eqref{b2b}.

From \eqref{b2b} we complete the two-blocks estimate w.r.t.\@ $\mathbb{P}^N_{\nu_\lambda^N}$ using the exponential Chebyshev's inequality and the Feynman-Kac formula as before. This leads to
\begin{align}
\label{bdsup}
\limsup_{\epsilon\downarrow 0} \limsup_{N\to\infty} \sup_\pt \frac{1}{\mathcal{V}_N} \log \mathbb{P}^N_{\nu^N_\lambda} \left\{ \int_0^T\, \left|U_{N,\epsilon}(\pt,\eta^N_t)\right|\,dt>\delta\right\} = -\infty.
\end{align}

\emph{Change of measure.}
Finally we transfer the estimates from $\nu_\lambda^N$ to an initial configuration $\eta_0^N$.
Note the Radon-Nikod\'ym derivative
\[
 \frac{d \eta^N_0}{d\nu_\lambda^N} =\mathbbm{1}_{\eta_0^N}\cdot\exp\left(\sum_{x\in V(\Gamma_N)} \left[\eta_0^N(x) \log\left(\frac{1}{\rho_\lambda^N(x)} \right) + (1-\eta_0^N(x))\log\left(\frac{1}{1-\rho_\lambda^N(x)}\right) \right]\right).
\]
Under Condition (\nameref{ass:boundaryrate}), $\rho_\lambda^N(x) \in \left[\frac{1}{1+\gamma}, \frac{\gamma}{1+\gamma}\right]$ for all $x$ and $N$, so there exists a positive constant $C_\gamma := \log(1+\gamma)$ such that for all $N$,
\begin{align*}
\left\|\frac{d \eta_0^N}{d\nu_\lambda^N}\right\|_\infty \leq e^{C_\gamma |V(\Gamma_N)|}.
\end{align*}
Thus for any measurable event $\mathcal{O}$,
\begin{align}
\label{ineqO}
\frac{1}{\mathcal{V}_N} \log \mathbb{P}^N_{\eta_0^N}[\mathcal{O}] \leq \frac{1}{\mathcal{V}_N} \log\mathbb{P}^N_{\nu^N_\lambda}[\mathcal{O}] + C_\gamma \frac{|V(\Gamma_N)|}{\mathcal{V}_N}.
\end{align}
By Condition (\nameref{ass:boundedaway}), the second term on the RHS is asymptotically bounded above. From\eqref{bdsup} and \eqref{ineqO} we obtain \eqref{supexpboundary}.

To prove \eqref{supexpboundary2}, we set
\[
U(a,\eta) := \eta(a)-\frac{\lambda_+(a)}{\lambda_+(a)+\lambda_-(a)}, \quad a\in \partial V_N, ~\eta\in \{0,1\}^{V(\Gamma_N)},
\]
and consider the largest eigenvalue of
\[
\frac{1}{2}\left(\left[\mathcal{T}_N\mathcal{L}^{\rm bEX}_{(\Gamma_N,{\bf c})} \pm \kappa \mathcal{V}_N U(a,\cdot)\right]+\left[\mathcal{T}_N\mathcal{L}^{\rm bEX}_{(\Gamma_N,{\bf c})} \pm \kappa \mathcal{V}_N U(a,\cdot)\right]^*\right)
\]
w.r.t.\@ the measure $\nu_\lambda^N$. We claim that this eigenvalue is finite. After running the aforementioned variational argument, it boils down to checking that
\[
\lim_{N\to\infty} \nu_\lambda^{\partial V_N}[U(a,\eta)] = \lim_{N\to\infty} \left[\rho_N(a) - \frac{\lambda_+(a)}{\lambda_+(a)+\lambda_-(a)} \right]=0
\]
But this is \eqref{eq:ratecoincide}, which follows from the condition \eqref{flowfinite} in Assumption \ref{ass:boundaryscaling}. The remaining arguments (applying exponential Chebyshev's inequality, the Feynman-Kac formula, and a change of measure) are routine, yielding \eqref{supexpboundary2}.
\end{proof}

\section{Examples}
\label{sec:examples}

In this section we comment in more detail on the assumptions, stated in \S\ref{sec:main}, which underlie our local ergodic theorems (both the conservative version and the boundary-driven version), and provide examples where the assumptions apply. 

Throughout this section, the notation $f(\cdot) \simeq g(\cdot)$ means that there exist positive constants $c\leq C$ such that $cg(\cdot) \leq f(\cdot) \leq Cg(\cdot)$ uniformly in the argument.

\subsection{Comparison between mean exit time and mean commute time}

As alluded to in \S\ref{sec:superexp}, for many applications we take $\mathcal{V}_N$ to stand for $|B(o, r_N)|$ or $\mathcal{V}(o, r_N)$, and $\mathcal{T}_N$ to be the (extremal) mean exit time from $B(o,r_N)$. In this situation, Assumption \ref{ass:1} amounts to the condition that the random walk is \emph{strongly recurrent}, a property carried by many a weighted graph whose \emph{spectral dimension} is less than (but not equal to) $2$; this will be explained precisely in \S\ref{sec:strongrec} below. Meanwhile, Assumption \ref{ass:2} boils down to an asymptotic comparison between the mean exit time and the mean commute time of the random walk.

\begin{lemma}
If $\mathcal{V}_N = \mathcal{V}(o, r_N)$, then Assumption \ref{ass:2} is equivalent to the following condition:
\begin{align}
\label{ratiocommute}
\liminf_{\epsilon \downarrow 0} \liminf_{N\to\infty} \frac{\mathcal{T}_N}{\tc(y,z)} = \infty
\end{align}
for each $x\in V(\Gamma)$ and each $y,z\in B(x,r_{\epsilon N})$, where 
\[
\tc(y,z):={\bf E}^y[T_z] + {\bf E}^z[T_y]
\]
 is the \textbf{mean commute time} between $y$ and $z$ for the symmetric random walk on $(\Gamma_N,{\bf c})$.
 
\end{lemma}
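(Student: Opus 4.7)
The plan is to invoke the classical \emph{commute-time identity} for random walks on finite weighted graphs. Recall (see, e.g., Chandra--Raghavan--Ruzzo--Smolensky--Tiwari, or \cite{LyonsPeres}*{Ch.\@ 2}) that for any finite connected weighted graph $(H,{\bf c})$ and any two vertices $y,z\in V(H)$, the commute time of the symmetric random walk on $(H,{\bf c})$ satisfies
\begin{equation*}
\mathbf{E}^y[T_z] + \mathbf{E}^z[T_y] \;=\; \mathcal{V}(V(H))\cdot R_{\rm eff}^{(H,{\bf c})}(y,z),
\end{equation*}
where $\mathcal{V}(V(H))=\sum_{x\in V(H)} c_x$ is the total weighted volume. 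This is the key identity that converts resistances to commute times.

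First I will apply this identity to $H=\Gamma_N$. Since $V(\Gamma_N)=B(o,r_N)$, the total weighted volume of $\Gamma_N$ is exactly $\mathcal{V}(o,r_N)$, which under the hypothesis of the lemma equals $\mathcal{V}_N$. Consequently
\begin{equation*}
\tau_N(y,z) \;=\; \mathcal{V}_N\cdot R_{\rm eff}^{(\Gamma_N,{\bf c})}(y,z)
\end{equation*}
for every $y,z\in V(\Gamma_N)$.

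Substituting this identity into the left-hand side of \eqref{eq:ass2} gives
\begin{equation*}
\frac{\mathcal{T}_N}{\mathcal{V}_N}\bigl(R_{\rm eff}^{(\Gamma_N,{\bf c})}(y,z)\bigr)^{-1} \;=\; \frac{\mathcal{T}_N}{\mathcal{V}_N\cdot R_{\rm eff}^{(\Gamma_N,{\bf c})}(y,z)} \;=\; \frac{\mathcal{T}_N}{\tau_N(y,z)},
\end{equation*}
and taking the iterated $\liminf$ in $N$ and $\epsilon$ on both sides yields the desired equivalence between Assumption \ref{ass:2} and \eqref{ratiocommute}.

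There is essentially no obstacle beyond recording the commute-time identity and checking that the volume normalization matches our $\mathcal{V}_N$. The only mild bookkeeping point is to ensure the identity is quoted in the convention consistent with \S 2.1, where $c_x=\sum_{y\sim x} c_{xy}$ and $p(x,y)=c_{xy}/c_x$; in that convention the normalization constant in the commute-time identity is precisely $\sum_{x\in V(H)} c_x=\mathcal{V}(V(H))$, as used above.
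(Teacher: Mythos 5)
Your proposal is correct and follows essentially the same route as the paper: the paper's proof likewise invokes the mean commute time identity of Chandra et al.\@ (cf.\@ \cite{CommuteTime}, \cite{MCBook}*{Proposition 10.6}), applies it to $(\Gamma_N,{\bf c})$, and uses $\mathcal{V}(\Gamma_N)=\mathcal{V}(o,r_N)=\mathcal{V}_N$ to rewrite \eqref{eq:ass2} as \eqref{ratiocommute}. Nothing further is needed.
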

\begin{proof}
We recall the \emph{mean commute time identity} \cite{CommuteTime} (see also \cite{MCBook}*{Proposition 10.6}): given a finite weighted graph $(G,{\bf c})$, we have that for every $y,z \in V(G)$,
\begin{align}
\label{eq:meancommute}
{\bf E}^y[T_z] + {\bf E}^z[T_y] = \mathcal{V}(G) R^{(G,{\bf c})}_{\rm eff}(y,z).
\end{align}
Since we assumed $\Gamma$ is locally finite, \eqref{eq:meancommute} applies to our setting with $(G,{\bf c})= (\Gamma_N, {\bf c})$. Thus we can use \eqref{eq:meancommute} to reexpress \eqref{eq:ass2} in Assumption \ref{ass:2} as \eqref{ratiocommute}.
\end{proof}

It is generally difficult to infer the asymptotics of commute times from those of hitting times; the analysis often proceeds on a case-by-case basis. For more discussions on this issue we refer the reader to \emph{e.g.\@} \cite{MCBook}*{Chapter 10} and \cite{AldousFill}*{Chapters 4--5}.

\subsection{Strongly recurrent weighted graphs} \label{sec:strongrec}

For the next three subsections we always assume

\begin{condition}[$p_0$]
\label{cond:ellipticity}
There exists a constant $p_0>0$ such that 
\begin{align}
p(x,y) :=\frac{c_{xy}}{c_x} \geq p_0 \quad \text{for all}~x\in V(\Gamma)~\text{and}~xy\in E(\Gamma).
\end{align}
\end{condition}

Random walk on a connected weighted graph $(\Gamma,{\bf c})$ is said to be \emph{recurrent} if, for every $x\in V(\Gamma)$, ${\bf P}^x[T_x^+ <\infty]=1$. This is a notion familiar from the theory of Markov chains. \emph{Strong recurrence} is a slightly more restrictive notion than recurrence. From the existing literature we are aware of (at least) two formulations of strong recurrence.

\begin{definition}[\cites{Telcs01,Telcs01_2}]
\label{def:SR}
$(\Gamma, {\bf c})$ is \textbf{strongly recurrent (SR)} if there exist $c>0$ and $M>1$ such that
\begin{align}
R_{\rm eff}\left(x, B(x,Mr)^c\right) \geq (1+c) R_{\rm eff}\left(x, B(x,r)^c\right) \quad \text{for all}~x\in V(\Gamma), ~r\geq 1.
\end{align}
\end{definition}

\begin{definition}[\cites{Delmotte,BarlowValues}]
\label{def:VSR}
$(\Gamma, {\bf c})$ is \textbf{very strongly recurrent (VSR)} if there exists $p_1>0$ such that
\begin{align}
{\bf P}^x\left[T_y< \mathcal{T}(x,2r)\right] \geq p_1 \quad \text{for all}~x\in V(\Gamma),~r\geq 1, ~d(x,y) < r.
\end{align}
\end{definition}

It is proved in \cite{BCK05}*{Lemma 3.6} that (VSR) implies (SR). See \cite{BCK05}*{\S5, Example 5} for an example of a weighted graph which satisfies (SR) but not (VSR). What distinguishes (VSR) from (SR) is the addition of the elliptic Harnack inequality.

\begin{definition}
$(\Gamma, {\bf c})$ satisfies the \textbf{elliptic Harnack inequality (H)} if there exists $C>0$ such that for all $x\in V(\Gamma)$, all $r>1$, and all nonnegative harmonic functions $h$ on $B(x,2r)$,
\begin{align}
\max_{B(x,r)} h \leq C \min_{B(x,r)} h.
\end{align}
\end{definition}


\begin{proposition}
\label{prop:VSRSR}
Under ($p_0$), (VSR) is equivalent to (SR) $+$ (H).
\end{proposition}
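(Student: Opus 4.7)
The plan is to prove the two implications of the equivalence separately. For (VSR) $\Rightarrow$ (SR) $+$ (H), the paper has already noted that (VSR) $\Rightarrow$ (SR) follows from \cite{BCK05}*{Lemma 3.6}, so only (VSR) $\Rightarrow$ (H) requires work. My approach would be a standard Harnack-chain argument: given a nonnegative $h$ harmonic on $B(x,2r)$ and two points $y,z\in B(x,r)$, I would build a chain $y=y_0,y_1,\ldots,y_k=z$ inside $B(x,r)$ with $d(y_i,y_{i+1}) \leq r/C$ for a fixed $C\geq 2$, so that each auxiliary ball $B(y_i,2r/C)$ sits inside $B(x,2r)$. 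At each link, optional stopping of $h$ at $\tau_i = T_{y_{i+1}} \wedge \mathcal{T}(y_i,2r/C)$ combined with the uniform lower bound supplied by (VSR) gives $h(y_i) \geq p_1\, h(y_{i+1})$; multiplying across the chain yields $h(y)\geq p_1^k\, h(z)$, and since $k\leq 2C$ independently of $r$ by the triangle inequality, this is (H) with $C_H = p_1^{-2C}$.

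For the converse (SR) $+$ (H) $\Rightarrow$ (VSR), I would fix $x,y,r$ with $d(x,y)<r$, set $A:=B(x,2r)^c$, and invoke the classical electrical identity
\[
{\bf P}^x[T_y < T_A] \,=\, \frac{G_A(x,y)}{G_A(y,y)}, \qquad G_A(y,y) \,=\, c_y\, R_{\rm eff}^{(\Gamma,{\bf c})}(y,A),
\]
where $G_A$ denotes the Green's function of the random walk killed on $A$. Since $w\mapsto G_A(w,y)$ is nonnegative and harmonic on $B(x,2r)\setminus\{y\}$, the plan is to apply (H) along a Harnack chain from a neighbour $w_0$ of $y$ all the way to $x$, using condition ($p_0$) at the very first link to secure a base estimate $G_A(w_0,y)\geq p_0\, G_A(y,y)$ by a one-step decomposition of the walk from $y$. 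This chain-then-base strategy would deliver $G_A(x,y)\gtrsim G_A(y,y)$, hence ${\bf P}^x[T_y<T_A]\gtrsim 1$ uniformly in $x,y,r$.

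The principal obstacle is the second direction, where I must justify that the Harnack chain from $w_0$ to $x$ has length bounded uniformly in $r$ while avoiding the singularity of $G_A(\cdot,y)$ at $y$. This is exactly what (SR) provides: its strict growth of $R_{\rm eff}(x,B(x,\cdot)^c)$ bootstraps---via the resistance-to-distance dictionary and (H)---into a scale-invariant chaining property, bounding the chain length by a constant depending only on $c$ and $M$ from Definition \ref{def:SR}. The forward direction, by contrast, is a rather painless Harnack-chain construction whose only subtlety is keeping the auxiliary balls $B(y_i,2r/C)$ inside $B(x,2r)$, which is dispatched by choosing $C\geq 2$ at the outset.
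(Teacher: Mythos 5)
Your proposal does substantially more than the paper, whose proof of this proposition is essentially a pair of citations: (VSR)$\Rightarrow$(H) is quoted from \cite{BarlowValues}*{Lemma 1.6} and the converse from \cite{TelcsBook}*{\S 9.3.1}, with (VSR)$\Rightarrow$(SR) already imported from \cite{BCK05}. Your forward direction is the standard chaining argument behind Barlow's lemma and is essentially sound: routing the chain through $x$ keeps it in $B(x,r)$, the bound $k\lesssim C$ on the number of links is correct, and the remaining issues (small $r$, where $r/C<1$, handled directly by ($p_0$); making sure the stopped walk only visits points where $h$ is defined, handled by taking $C$ slightly larger) are routine. Also, your base estimate in the converse is fine, though for the right reason: $G_A(w_0,y)=\mathbf{P}^{w_0}[T_y<T_A]\,G_A(y,y)\ge p(w_0,y)G_A(y,y)\ge p_0\,G_A(y,y)$ by one step from $w_0$ to $y$, not by a decomposition from $y$.

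The genuine gap is in the converse, at exactly the point you flag as the ``principal obstacle.'' Since $w\mapsto G_A(w,y)$ is harmonic only on $B(x,2r)\setminus\{y\}$, every Harnack ball $B(z_i,2\rho_i)$ in your chain must avoid $y$, forcing $2\rho_i<d(z_i,y)$. Starting from a neighbour $w_0$ of $y$, the admissible radii are of order $1$ and can grow at most proportionally to the distance from $y$, so any chain reaching $x$ (where $d(x,y)$ may be comparable to $r$) requires on the order of $\log r$ balls, not a number bounded in terms of $c$ and $M$. Chaining then only yields $G_A(x,y)\gtrsim C_H^{-c\log r}G_A(y,y)$, i.e.\ a polynomially decaying lower bound on $\mathbf{P}^x[T_y<T_A]$, which is not (VSR). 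The appeal to (SR) to produce a ``scale-invariant chaining property'' with uniformly bounded length is unsubstantiated: (SR) constrains resistance growth across scales, not the metric geometry near the singularity, and no such bounded chain exists in general. The known proof proceeds differently, via the hitting estimate $\mathbf{P}^x[T_{B(x,2r)^c}<T_y]\le R_{\rm eff}(x,y)/R_{\rm eff}\bigl(x,B(x,2r)^c\bigr)$ combined with (H)-based comparisons between point-to-point and point-to-boundary resistances and iteration of (SR) over scales; this resistance comparison is precisely the ``dictionary'' your sketch assumes, so the implication (SR)$+$(H)$\Rightarrow$(VSR) is not established by your argument. Given that the paper itself defers this direction to \cite{TelcsBook}, the economical fix is to cite it rather than to repair the chaining scheme.
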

\begin{proof}
We already mentioned that (VSR) implies (SR). That (VSR) implies (H) is proved in \cite{BarlowValues}*{Lemma 1.6}. 
For the reverse direction see \cite{TelcsBook}*{\S9.3.1}.
\end{proof}

Our main result of this section is that local ergodicity in the exclusion process holds under ($p_0$) and (VSR).

\begin{proposition}
\label{SRprop}
($p_0$) and (VSR) implies Assumptions \ref{ass:1} and \ref{ass:2}.
\end{proposition}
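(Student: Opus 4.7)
The plan is to set $\mathcal{V}_N := \mathcal{V}(o, r_N)$ and $\mathcal{T}_N := \mathcal{T}(o, r_N)$, and derive both assumptions from three standard potential-theoretic estimates that are valid on any weighted graph satisfying $(p_0)$ and (VSR). The three essentials are: (i) the uniform mean-exit-time/resistance comparison $\mathcal{T}(x, r) \simeq \mathcal{V}(x, r)\, R(x, r)$, where $R(x, r) := R_{\rm eff}^{(\Gamma,{\bf c})}(x, B(x, r)^c)$ (see e.g.\@ \cite{TelcsBook}); (ii) a Barlow-type local resistance bound of the form $R_{\rm eff}^{(\Gamma_N, {\bf c})}(y, z) \lesssim R(x, 2r)$ whenever $y, z \in B(x, r)$ and $B(x, 2r) \subset V(\Gamma_N)$ (following \cite{BarlowValues}); and (iii) the uniform polynomial two-sided control $R(x, r) \simeq r^\beta$ for some $\beta = \log(1+c)/\log M > 0$, which follows by iterating Definition \ref{def:SR} for the lower bound and by the elliptic Harnack inequality (H) furnished by Proposition \ref{prop:VSRSR} for uniformity in $x$.

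Assumption \ref{ass:1} then follows immediately by combining (i) and (iii): $\mathcal{T}_N/\mathcal{V}_N \simeq R(o, r_N) \gtrsim r_N^\beta \to \infty$ as $r_N \uparrow \infty$. For Assumption \ref{ass:2}, I would first apply the commute time identity \eqref{eq:meancommute} in the finite graph $\Gamma_N$ (noting $\mathcal{V}(\Gamma_N) = \mathcal{V}_N$) to rewrite $R_{\rm eff}^{(\Gamma_N, {\bf c})}(y, z) = \tau_N(y, z)/\mathcal{V}_N$, so that the quantity to be shown to diverge is $\mathcal{T}_N/\tau_N(y, z)$. For $y, z \in B(x, r_{\epsilon N})$ and $\epsilon$ small enough that $B(x, 2r_{\epsilon N}) \subset V(\Gamma_N)$, estimate (ii) yields
\[
R_{\rm eff}^{(\Gamma_N, {\bf c})}(y, z) \lesssim R(x, 2r_{\epsilon N}) \simeq \frac{\mathcal{T}(x, 2r_{\epsilon N})}{\mathcal{V}(x, 2r_{\epsilon N})}.
\]
Hence the target ratio is bounded below by $R(o, r_N)/R(x, 2r_{\epsilon N}) \simeq (r_N/r_{\epsilon N})^{\beta}$ uniformly in $x$, which diverges in the iterated limit $N\to\infty$ then $\epsilon\downarrow 0$ for the types of growth of $r_N$ encountered in the applications of \S\ref{sec:examples} (polynomial or faster).

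The main obstacle is establishing estimate (ii) \emph{inside} the subgraph $\Gamma_N$: Rayleigh's monotonicity runs the ``wrong'' way when one passes from the mother graph $\Gamma$ to $\Gamma_N$ (restriction can only increase effective resistance), so Barlow's estimate on $\Gamma$ does not transfer directly. The remedy is to restrict yet further, via a test-flow argument, to the concentric ball $B(x, 2r_{\epsilon N})$ sitting well inside $\Gamma_N$: any unit flow from $y$ to $z$ supported in $B(x, 2r_{\epsilon N}) \subset V(\Gamma_N)$ has the same energy in both graphs, so $R_{\rm eff}^{(\Gamma_N,{\bf c})}(y, z) \leq R_{\rm eff}^{(B(x, 2r_{\epsilon N}),{\bf c})}(y, z)$, and the VSR/(H) machinery on $\Gamma$ (via a cutoff harmonic function supported in the concentric ball) bounds the latter by $R(x, 2r_{\epsilon N})$ up to a uniform constant. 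The remaining work is bookkeeping around the uniform-in-$x$ constants in the polynomial bound on $R(x, r)$, which is exactly the content of the elliptic Harnack inequality supplied by (VSR).
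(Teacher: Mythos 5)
Your overall route for Assumption \ref{ass:2} is the same as the paper's --- combine the Telcs comparison $\mathcal{T}(x,r)\simeq \mathcal{V}(x,r)\,R(x,r)$ (valid under ($p_0$)+(SR)+(H), hence under ($p_0$)+(VSR) by Proposition \ref{prop:VSRSR}; here $R(x,r):=R_{\rm eff}^{(\Gamma,{\bf c})}(x,B(x,r)^c)$, as in your proposal) with an upper bound on the point-to-point resistance inside the small ball --- but your step (iii) is a genuine gap. Iterating Definition \ref{def:SR} gives only the one-sided bound $R(x,M^k)\geq (1+c)^k R(x,1)$; neither (SR) nor the elliptic Harnack inequality yields a matching polynomial \emph{upper} bound on $R(x,r)$, let alone one with the same exponent $\beta=\log(1+c)/\log M$. (The conditions ($V_\alpha$), ($E_\beta$) of \S\ref{sec:examples} are additional hypotheses; \cite{BarlowValues} constructs graphs satisfying them, it does not deduce them from (VSR).) Since your divergence of $R(o,r_N)/R(x,2r_{\epsilon N})$ is extracted precisely from the upper half of the claimed law $R(x,r)\simeq r^\beta$, the argument does not close under the stated hypotheses. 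The repair is to drop the two-sided law and compare the two resistances directly by iterating (SR) over the scales between $r_{\epsilon N}$ and $r_N$ (moving the center from $x$ to $o$ costs only a constant, since $x$ is fixed in Assumption \ref{ass:2}), which produces a factor $(1+c)^{\lfloor \log_M (r_N/(C r_{\epsilon N}))\rfloor}$; this is what the hypotheses actually give. It still requires $r_N/r_{\epsilon N}\to\infty$ in the iterated limit --- your ``polynomial or faster'' caveat --- but the paper's own argument uses the same scale separation implicitly (it bounds $R_{\rm eff}^{(\Gamma_N,{\bf c})}(y,z)$ by the $R_{\rm eff}^{(\Gamma,{\bf c})}$-diameter of $B(x,r_{\epsilon N})$ and then needs that diameter to be negligible against $R_{\rm eff}^{(\Gamma,{\bf c})}(o,B(o,r_N)^c)$), so that caveat concerns the choice of exhaustion rather than being the central defect.

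Two further points. First, your estimate (ii) --- bounding the resistance between $y,z$ computed in the induced, free-boundary ball by $C\,R(x,2r_{\epsilon N})$ --- is only sketched: Rayleigh monotonicity does reduce $\Gamma_N$ to the concentric ball, but the VSR-type hitting estimate controls resistances in the mother graph (or with the complement killed), not in an induced subgraph with free boundary, so the ``cutoff harmonic function'' step is exactly where the remaining work lies; the paper asserts the analogous comparison (against the resistance diameter of the small ball) without spelling this out, and you were right to flag the monotonicity issue. Second, for Assumption \ref{ass:1} the paper argues differently and more economically: a last-exit decomposition gives $\mathbb{E}_x[T_{A^c}]/\mathcal{V}(A)=G^A(x,x)/\mathcal{V}(x)$, which diverges by recurrence alone. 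Your route via the Telcs relation of \cite{Telcs01_2} together with the (SR) lower bound on $R(o,r_N)$ is also correct --- only the lower bound is used there --- it simply imports heavier machinery than needed.
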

\begin{proof}
Let $A$ be any measurable subset of $V(\Gamma)$. Using the Green's function $G^A$ killed upon exiting $A$, the reversibility of the random walk process, and a last exit decomposition, we obtain
\[
\mathbb{E}_x[T_{A^c}] = \sum_{y\in A} G^A(x,y) = \sum_{y\in A} \frac{\mathcal{V}(y)}{\mathcal{V}(x)}G^A(y,x)  = \sum_{y\in A} \frac{\mathcal{V}(y)}{\mathcal{V}(x)} \mathbb{P}_y[T_x<T_A] G^A(x,x) = \frac{G^A(x,x) }{\mathcal{V}(x)} \mathcal{V}(A).
\]
Hence
\[
\frac{\mathbb{E}_x[T_{A^c}]}{\mathcal{V}(A)} = \frac{G^A(x,x)}{\mathcal{V}(x)}.
\]
Assuming that random walks on $(\Gamma, {\bf c})$ are recurrent, $G^A(x,x) \to \infty$ as $A\uparrow \Gamma$. This verifies Assumption \ref{ass:1}.

Next we check Assumption \ref{ass:2}. By Proposition \ref{prop:VSRSR}, ($p_0$) $+$ (VSR) is equivalent to ($p_0$) $+$ (SR) $+$ (H). By \cite{Telcs01_2}*{Corollary 4.6}, ($p_0$), (SR) and (H) implies that
\begin{align}
\label{TelcsScaling}
\mathcal{T}(x,r) \simeq m(B(x,r)) R_{\rm eff}^{(\Gamma, {\bf c})}\left(x, B(x,r)^c\right) \quad \text{for all}~x\in V(\Gamma),~r\geq 1.
\end{align}
For any $y,z \in B(x, r_{\epsilon N})$,
\begin{align}
\frac{\mathcal{T}_N}{\mathcal{V}_N} \left(R_{\rm eff}^{(\Gamma_N,{\bf c})}(y,z)\right)^{-1} \simeq \frac{R_{\rm eff}^{(\Gamma, {\bf c})}\left(o,B(o,r_N)^c\right)}{R_{\rm eff}^{(\Gamma_N,{\bf c})}(y,z)} \geq C \frac{R_{\rm eff}^{(\Gamma, {\bf c})}\left(o,B(o,r_N)^c\right)}{\rho(B(x,r_{\epsilon N}))},
\end{align}
where $\rho(A)$ stands for the diameter of $A\subset V(\Gamma)$ w.r.t.\@ the effective resistance metric $R_{\rm eff}^{(\Gamma, {\bf c})}$. Lastly, observe that for every $N$, $\rho(B(x,r_{\epsilon N}))$ monotonically decreases to $0$ as $\epsilon \downarrow 0$. So
\begin{align}
\liminf_{\epsilon\downarrow 0} \liminf_{N\to\infty} \frac{R_{\rm eff}^{(\Gamma, {\bf c})}\left(o,B(o,r_N)^c\right)}{\rho(B(x,r_{\epsilon N}))} = \infty.
\end{align}
Assumption \ref{ass:2} is thus verified.
\end{proof}

%

\begin{remark}
\eqref{TelcsScaling} is reminiscent of the \emph{Einstein relation}, which connects the expected exit time of a random walk with the volume and the resistance of the graph. There is some variation in how the Einstein relation is defined in the literature. In \cite{KigamiMemoir} the point-to-point effective resistance is used:
\[
\mathcal{T}(x,r) \simeq  \mathcal{V}(B(x,r))\sup_{y\in B(x,r)} R_{\rm eff}^{(\Gamma,{\bf c})}(x,y) \quad \text{for all}~x\in V(\Gamma), ~r\geq 1.
\]
 In \cite{TelcsBook} the volume of the ball is replaced by the volume of an annulus, \emph{cf.\@} \cite{TelcsBook}*{(7.1)}:
\[
\mathcal{T}(x, 2r) \simeq \mathcal{V}\left(B(x,2r) \setminus B(x,r)\right) R_{\rm eff}^{(\Gamma, {\bf c})}\left(B(x,r), B(x,2r)^c\right) \quad \text{for all}~x\in V(\Gamma),~r\geq 1.
\]
\end{remark}

The next result concerns the range of the volume growth exponent $\alpha$ and of the time growth exponent $\beta$ of weighted graphs to which Assumptions \ref{ass:1} and \ref{ass:2} apply. We recall the conditions
\begin{align}
\tag{$V_\alpha$} \text{There exists $C\geq 1$ such that} \quad  C^{-1} r^\alpha \leq &\mathcal{V}(B(x,r)) \leq C r^\alpha \quad \text{for all}~x\in V(\Gamma), ~r\geq 1\\
\tag{$E_\beta$} \text{There exists $c\geq 1$ such that} \quad  c^{-1} r^\beta \leq &\mathcal{T}(x,r)\leq c r^\beta \quad \text{for all}~ x\in V(\Gamma), ~r\geq 1.
\end{align}

\begin{proposition}
\label{prop:BarlowValues}
Let $\alpha \geq 1$ and 
\[
\beta \in \left\{\begin{array}{ll} [2, 1+\alpha], & \text{if}~\alpha \in [1,2),\\ (\alpha, 1+\alpha], &\text{if}~\alpha \in [2,\infty).\end{array} \right.
\]
Then there exists an infinite connected locally finite graph which satisfies $(V_\alpha)$, $(E_\beta)$, (H), and (VSR), and thus also satisfy Assumptions \ref{ass:1} and \ref{ass:2}.
\end{proposition}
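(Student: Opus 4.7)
The plan is to combine the explicit constructions of \cite{BarlowValues} with Proposition \ref{SRprop}. No new examples need to be built; the point is to identify, for each admissible exponent pair $(\alpha,\beta)$, a weighted graph in the existing literature that meets all hypotheses of Proposition \ref{SRprop}.

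First, for each $(\alpha,\beta)$ in the stated range I would invoke the realization theorems in \cite{BarlowValues}, which produce an infinite, connected, locally finite, bounded-degree weighted graph $(\Gamma,{\bf c})$ satisfying $(V_\alpha)$, $(E_\beta)$, and condition ($p_0$). The classical corners of the admissible region are realized by $\mathbb{Z}^d$ (with $\alpha=d$, $\beta=2$) and by graphical approximations of self-similar fractals (with $\beta=1+\alpha$). Barlow's constructions interpolate between these endpoints using weighted trees for $\alpha\in [1,2)$ and modifications of Sierpinski carpet-type graphs for $\alpha\in [2,\infty)$. A crucial feature of all of these constructions is that they additionally support the parabolic Harnack inequality, so in particular (H) holds.

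Second, I would verify (VSR). In both admissible regimes one has $\beta>\alpha$ strictly, so the spectral dimension $d_s=2\alpha/\beta$ is strictly less than $2$. Under $(V_\alpha)$, $(E_\beta)$, (H) and ($p_0$), the Einstein-type scaling \eqref{TelcsScaling} yields
\[
R_{\rm eff}^{(\Gamma,{\bf c})}(x, B(x,r)^c) \simeq \frac{\mathcal{T}(x,r)}{\mathcal{V}(B(x,r))} \simeq r^{\beta-\alpha},
\]
which tends to $\infty$ as $r\to\infty$; the same computation shows that the ratio $R_{\rm eff}(x, B(x,Mr)^c)/R_{\rm eff}(x, B(x,r)^c)$ is comparable to $M^{\beta-\alpha}$, hence bounded strictly above $1$ for $M$ sufficiently large. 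This is precisely (SR) in the sense of Definition \ref{def:SR}. Proposition \ref{prop:VSRSR} then upgrades (SR) to (VSR), and Proposition \ref{SRprop} delivers Assumptions \ref{ass:1} and \ref{ass:2}.

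The main obstacle lies outside the present paper: it is the construction of graphs that exhaust the full admissible region of exponent pairs $(\alpha,\beta)$, which is the central technical achievement of \cite{BarlowValues}. Given those examples in hand, the remaining verifications---elliptic Harnack, strong recurrence, and the application of Proposition \ref{SRprop}---amount to a routine packaging exercise.
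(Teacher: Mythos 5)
Your overall route is the paper's: invoke the realization results of \cite{BarlowValues} for a graph with $(V_\alpha)$, $(E_\beta)$ and the Harnack inequality, conclude (VSR), and then apply Proposition \ref{SRprop}. The first and last steps are fine. The gap is in your verification of (SR)/(VSR). You derive the two-sided bound $R_{\rm eff}^{(\Gamma,{\bf c})}(x,B(x,r)^c)\simeq \mathcal{T}(x,r)/\mathcal{V}(B(x,r))\simeq r^{\beta-\alpha}$ from the scaling relation \eqref{TelcsScaling}, but in this paper (following \cite{Telcs01_2}*{Corollary 4.6}) that relation is itself a consequence of ($p_0$), (SR) and (H); using it to \emph{establish} (SR) is circular. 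What comes for free from $(V_\alpha)$ and $(E_\beta)$ is only the lower bound $R_{\rm eff}(x,B(x,r)^c)\geq \mathcal{T}(x,r)/\mathcal{V}(B(x,r))\gtrsim r^{\beta-\alpha}$, via the Green's function identity $G^{B}(x,x)=\mathcal{V}(x)R_{\rm eff}(x,B^c)$ and $\mathcal{T}(x,r)=\sum_{y\in B}G^B(x,y)\leq R_{\rm eff}(x,B^c)\mathcal{V}(B)$. Your ratio argument, showing $R_{\rm eff}(x,B(x,Mr)^c)/R_{\rm eff}(x,B(x,r)^c)\geq 1+c$, additionally needs the matching \emph{upper} bound $R_{\rm eff}(x,B(x,r)^c)\lesssim r^{\beta-\alpha}$, which does not follow from these elementary identities and is precisely the nontrivial content you would have to import or prove.

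The fix is the one the paper uses: \cite{BarlowValues}*{Theorem 2} provides, for every admissible pair $(\alpha,\beta)$ with $\alpha\geq 1$ and $2\leq\beta\leq 1+\alpha$, an infinite connected locally finite graph satisfying $(V_\alpha)$, $(E_\beta)$ and (H), and \cite{BarlowValues}*{Proposition 3} states directly that $(V_\alpha)$, $(E_\beta)$, (H) together with $\beta>\alpha$ (which, as you correctly note, holds throughout the stated range) imply (VSR). With that citation in place of the Einstein-relation detour, the rest of your argument (Proposition \ref{prop:VSRSR} is then unnecessary; Proposition \ref{SRprop} gives Assumptions \ref{ass:1} and \ref{ass:2}) goes through. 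A minor further point: your assertion that ``all of these constructions'' satisfy the parabolic Harnack inequality is unsubstantiated and unneeded, since (H) is part of the statement of Barlow's theorem.
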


See Figure \ref{fig:BarlowValues} for the range indicated by Proposition \ref{prop:BarlowValues}.

\begin{proof}
By \cite{BarlowValues}*{Theorem 2}, if $\alpha\geq 1$ and $2\leq \beta \leq 1+\alpha$, then there exists an infinite connected locally finite graph which satisfies $(V_\alpha)$, $(E_\beta)$, and (H). (In fact, the converse is also true: if an infinite connected weighted graph satisfies $(p_0)$, ($V_\alpha$), and ($E_\beta$), then $\alpha\geq 1$ and $2\leq \beta \leq 1+\alpha$; \emph{cf.\@} \cite{BarlowValues}*{Theorem 1}.) Furthermore, by \cite{BarlowValues}*{Proposition 3}, if $(V_\alpha)$, $(E_\beta)$ and (H) hold, and $\beta>\alpha$, then $\Gamma$ satisfies (VSR). Now use Proposition \ref{SRprop}.
\end{proof}

\begin{figure}
\centering
\includegraphics[width=0.5\textwidth]{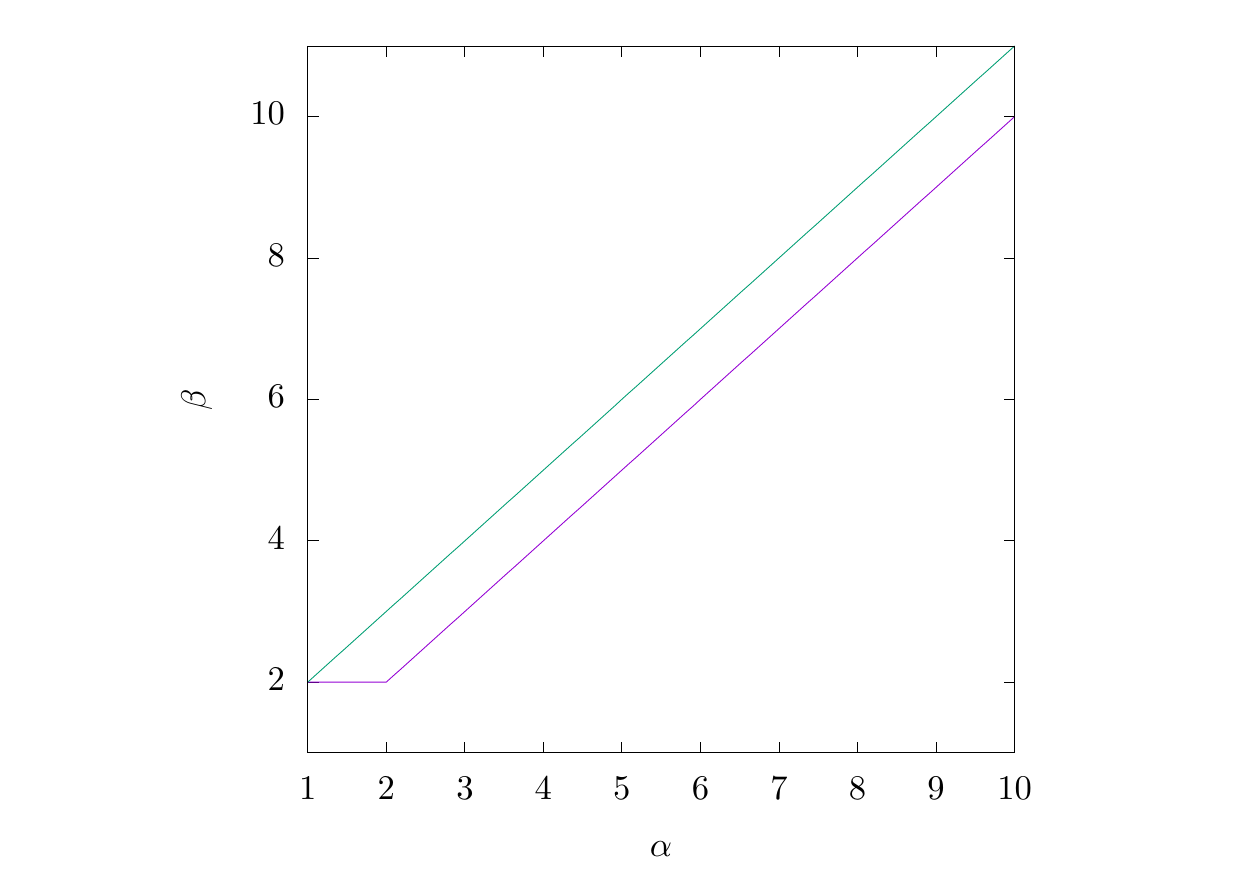}
\caption{The range of volume growth exponent $\alpha$ and of the time growth exponent $\beta$ for which Assumptions \ref{ass:1} and \ref{ass:2} hold is bounded between the two lines, \emph{cf.\@} Proposition \ref{prop:BarlowValues}.}
\label{fig:BarlowValues}
\end{figure}

\subsection{Euclidean lattices}

We would be remiss not to mention the case of $\mathbb{Z}^d$, regarded as a weighted graph with nearest neighbor edges with all conductances equal to $1$. Clearly ($p_0$) holds, and it is a classical result that (H) holds. As for the type problem, it is very strongly recurrent when $d=1$, recurrent but not strongly recurrent when $d=2$, and transient when $d\geq 3$. Therefore by Proposition \ref{SRprop}, Assumptions \ref{ass:1} and \ref{ass:2} hold simultaneously on $\mathbb{Z}$, but not on $\mathbb{Z}^d$, $d\geq 2$. That being said, local ergodicity has been proved on $\mathbb{Z}^d$ for any dimension $d$ using the invariance of the Dirichlet form $\mathcal{E}^{\rm EX}_{\mathbb{Z}^d}(f)$ under lattice translations and rotations; see \cites{GPV88, KOV89,KipnisLandim}.

\subsection{Examples of strongly recurrent graphs}

In this subsection we discuss specific examples of strongly recurrent graphs.
This also allows us to check Assumption \ref{ass:boundaryscaling} on a case-by-case basis. 

\subsubsection{Sierpinski gasket (and other post-critically finite fractals)}

\begin{figure}
\centering
\includegraphics[width=0.8\textwidth]{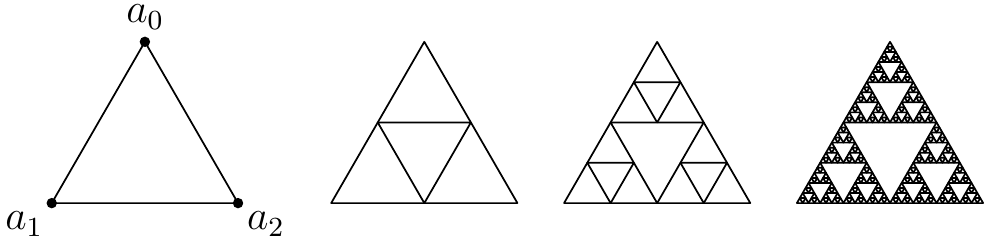}
\caption{The Sierpinski gasket ($SG$) graphs of level $0$, $1$, $2$, and $5$, respectively.}
\label{fig:SGImage}
\end{figure}

We quickly recall the construction of the infinite Sierpinski gasket graph; see Fgure \ref{fig:SGImage}. Let $a_0$, $a_1$, $a_2$ be the vertices of a nondegenerate triangle in $\mathbb{R}^2$, and $G_0$ be the complete graph on the vertex set $V_0 = \{a_0, a_1, a_2\}$, as shown on the left in Figure \ref{fig:SGImage}. We declare $V_0$ to be the (analytical but not topological) boundary of $SG$. Define the contracting similitude $\Psi_i : \mathbb{R}^2 \to\mathbb{R}^2$, $\Psi_i(x) = \frac{1}{2}(x-a_i) + a_i $ for each $i\in \{0,1,2\}$. For each $N\in \mathbb{N}$, the $N$th-level $SG$ graph $G_N$ is constructed inductively via the formula $G_N = \bigcup_{i=0}^2 \Psi_i(G_{N-1})$. Finally, set the conductance on every $e\in E_N$ to $1$. We denote the corresponding weighted graph $(G_N, {\bf 1})$.

For each $m$-letter word $w=w_1 w_2 \cdots w_m \in \{0,1,2\}^m$ , put $\Psi_w = \Psi_{w_1} \circ \Psi_{w_2} \circ \cdots \circ \Psi_{w_m}$. Two vertices $x, y\in V_N$ are said to be in the same level-$j$ cell, $j\in \{0,1,\cdots, N\}$, if $x, y\in \Psi_w(V_0)$ for some $j$-letter word $w \in \{0,1,2\}^j$.


We now proceed to check Assumptions \ref{ass:1} and \ref{ass:2} on $SG$. An explicit approach is as follows. Set $o$ at the origin (the lower-left corner vertex) and take $r_N = 2^N$. It is well known that $\mathcal{V}_N \simeq 3^N$ and $\mathcal{T}_N \simeq 5^N$, and hence Assumption \ref{ass:1} is satisfied. Also, the effective resistance metric and the graph metric on $\Gamma$ satisfy the scaling $R_{\rm eff}^{(\Gamma,{\bf c})}(y,z) \simeq d(y,z)^\beta$ where $\beta = \frac{\log(5/3)}{\log 2}$. So for all $y, z\in B(x, r_{\epsilon N})$,
\begin{align}
\frac{\mathcal{T}_N}{\mathcal{V}_N} \left(R_{\rm eff}^{(\Gamma_N,{\bf c})}(y,z)\right)^{-1} \simeq \frac{5^N}{3^N} (2^{\beta \epsilon N})^{-1} = \left(\frac{5}{3}\right)^{(1-\epsilon)N},
\end{align}
which tends to $\infty$ as $N\to\infty$ for every $\epsilon\in [0,1)$. Thus Assumption \ref{ass:2} is satisfied. Altogether this implies that Theorem \ref{thm:localrep} holds.

Alternatively, one can invoke the fact that $SG$ is a (very) strongly recurrent graph in the sense of Definition \ref{def:SR} or \ref{def:VSR}, and apply Proposition \ref{SRprop}.

For the boundary-driven case, we fix for all $N$ the rates $\lambda_\pm(a_i) = \rho_i \in (0,\infty)$ for $i=0,1,2$. It is straightforward to check conditions (\nameref{ass:boundedaway}) and (\nameref{ass:boundaryrate}), so the key remaining assumption to check is Assumption \ref{ass:boundaryscaling}. Let us note that for $a\in V_0$ and $u: V(\Gamma_N)\to\mathbb{R}$, $i_u(a)$ coincides with (the negative of) the normal derivative
\begin{align}
(\partial_\perp^N u)(a) = \sum_{\substack{x\in V(\Gamma_N) \\ ax\in E(\Gamma_N)}} [u(a)-u(x)]. 
\end{align}
By \cite{Kigami}*{Lemma 3.7.7} or \cite{StrichartzBook}*{Theorem 2.3.2}, if $f \in {\rm dom}(\Delta_\mu)$, where $\Delta_\mu$ is the Laplacian on $SG$ w.r.t.\@ any suitable measure $\mu$ (\emph{cf.\@} \cite{Kigami}*{Definition 3.7.1}), then the limit $(\frac{5}{3})^N (\partial^N_\perp f|_{\Gamma_N})(a)$ exists. Since $\rho_N$ is the restriction to $\Gamma_N$ of the harmonic function $\rho$, which belongs to ${\rm dom}(\Delta_\mu)$ with $\mu$ being the standard self-similar measure on $SG$, it follows that the limit $\lim_{N\to\infty} (\frac{5}{3})^N i_{\rho_N}(a)$ exists, and thus $\lim_{N\to\infty} (\frac{5}{3})^N \sum_{a \in \partial V_N} |i_{\rho_N}(a)| <\infty$ as $\partial V_N$ is a finite set. Also it is elementary to check that $\mathcal{E}^{\rm el}_{(\Gamma_N,{\bf c})}(\rho_N) \simeq \left(\frac{3}{5}\right)^N$. This concludes the verification of Assumption \ref{ass:boundaryscaling}. Thus Theorem \ref{thm:localergodicboundary} holds.

The aforementioned arguments extend to a wider class of fractals called \emph{post-critically finite (p.c.f.\@) fractals}, see \cite{Kigami}*{Definition 1.3.13} for the definition. As p.c.f.\@ fractals forms a subfamily of strongly recurrent fractals, Assumptions \ref{ass:1} and \ref{ass:2} will apply. In the case of the boundary-driven exclusion process, where the reservoirs are placed on the post-critical set in the construction of the p.c.f.\@ fractal (see \cite{Kigami}*{Definition 1.3.4}), Assumption \ref{ass:boundaryscaling} can be verified using \cite{Kigami}*{Lemma 3.7.7}, essentially a Gauss-Green formula.

\subsubsection{Sierpinski carpets}

\begin{figure}
\centering
\includegraphics[width=0.25\textwidth]{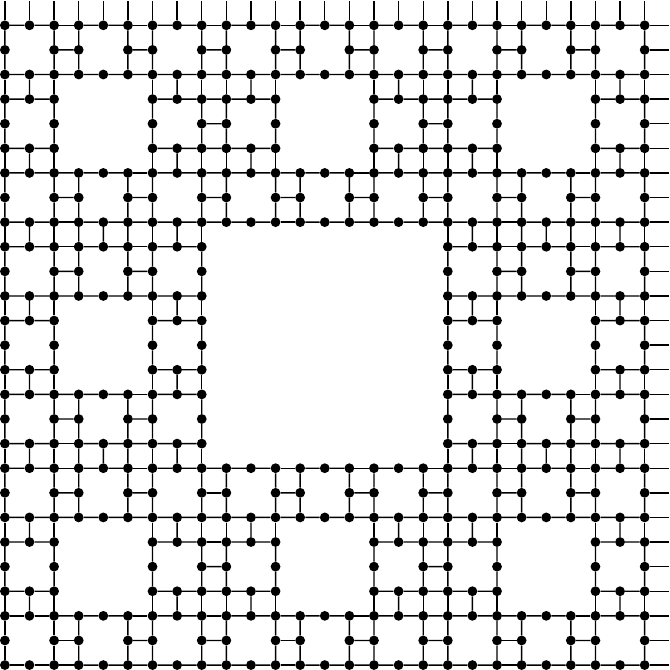}
\caption{A portion of the infinite Sierpinski carpet ($SC$) graph.}
\label{fig:SCGraph}
\end{figure}

An example of an infinitely ramified fractal is the Sierpinski carpet, which can be realized as an infinite graph as shown in Figure \ref{fig:SCGraph}. It is a strongly recurrent weighted graph. As a result, Assumptions \ref{ass:1} and \ref{ass:2} hold there. The validity of these assumptions extend to all \emph{generalized} Sierpinski carpet graphs (for the precise definition see \cite{BBKT}*{\S2.2}) whose spectral dimension is less than $2$. 

Unfortunately, at this moment we are unable to verify Assumption \ref{ass:boundaryscaling}, and in turn, establish a local ergodic theorem in the boundary-driven exclusion process, on the Sierpinski carpet graph. Unlike on p.c.f.\@ fractals, the corresponding boundary theory on the Sierpinski carpet remains a challenging problem; see \cites{HinoKumagai, BKS13} for known results. 

\subsubsection{Trees}

There are many examples of trees to which our Assumptions \ref{ass:1} and \ref{ass:2} are applicable. On the other hand, there are also many other trees which do not satisfy our Assumptions, such as $d$-regular trees with equal weights. Here we focus on two types of trees to which our Assumptions apply. A more systematic analysis is possible by extending the calculations of \cite{Pearce}.

\textbf{Vicsek trees.} These highly symmetric trees arising from the construction of the Vicsek set are described in \emph{e.g.\@} \cite{BarlowValues}*{\S4}; see Figure \ref{fig:Vicsek} for an example of a Vicsek tree. By \cite{BarlowValues}*{Lemma 4.2}, Vicsek trees satisfy ($V_\alpha$), ($E_{\alpha+1}$), (H), and (VSR). Thus they form the upper bound in the $\beta$ values possible for each given $\alpha\geq 1$, as indicated in Proposition \ref{prop:BarlowValues} (see the top line in Figure \ref{fig:BarlowValues}).

\begin{figure}
\centering
\includegraphics[width=0.25\textwidth]{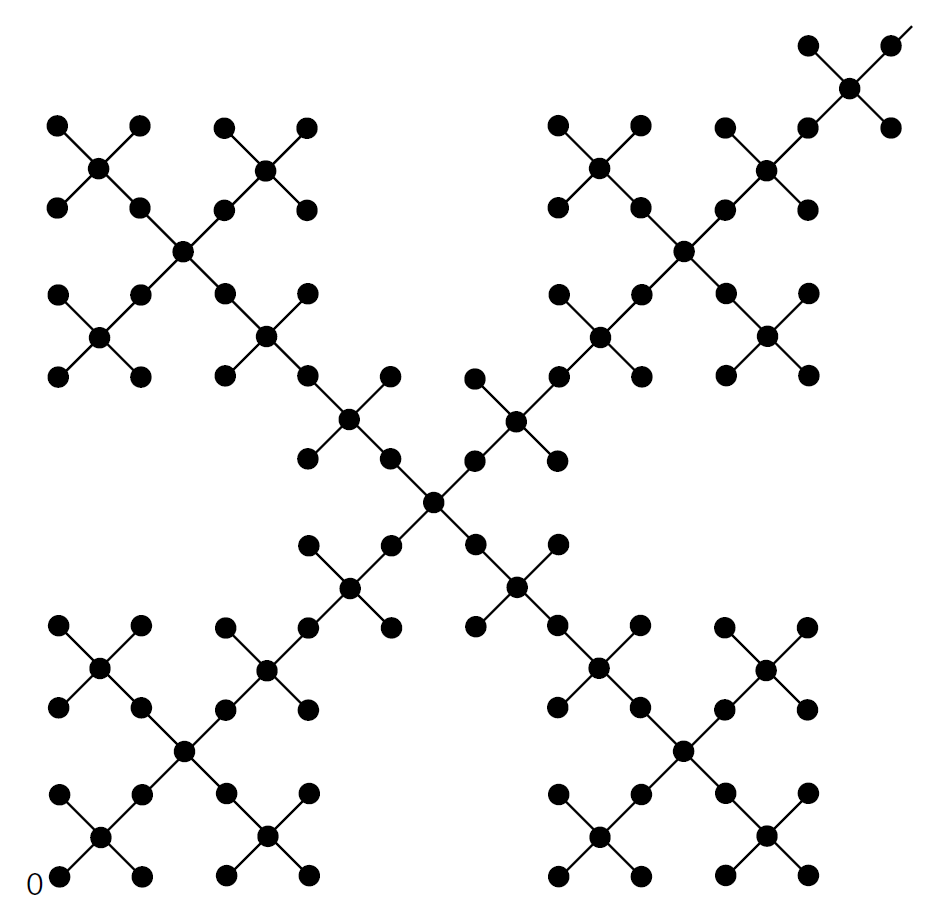}
\caption{The Vicsek tree.}
\label{fig:Vicsek}
\end{figure}

\textbf{The continuum random tree (CRT).} The CRT was introduced by Aldous in \cites{AldousCRT1, AldousCRT2, AldousCRT3}. Croydon and Hambly \cite{CH08} showed that the CRT can be realized as a random self-similar fractal represented by the metric measure space $(\mathcal{T}, d_\mathcal{T}, \mu)$. Specifically they proved the following result \cite{CH08}*{Theorem 2.1}: Almost surely there exists a local regular Dirichlet form $(\mathcal{E}_\mathcal{T}, \mathcal{F}_\mathcal{T})$ on $L^2(\mathcal{T}, \mu)$, which is associated with the metric $d_\mathcal{T}$ such that for every $x\neq y$,
\[
d_{\mathcal{T}}(x,y)^{-1} = \inf\{\mathcal{E}_\mathcal{T}(f,f): f\in \mathcal{F}_\mathcal{T},~f(x)=1,~f(y)=0\}.
\] 
This indicates that $d_\mathcal{T}$ is the resistance metric on the CRT, and on a tree, the resistance metric is equal to the geodesic metric. It is proved in \cite{DuquesneLeGall} that the  CRT has Hausdorff dimension (w.r.t.\@ the resistance metric) equal to $2$, although it is not uniformly volume doubling (due to logarithmic fluctuations), see \cite{Croydon08}*{Theorems 1.1$\sim$1.3} for the relevant estimates in both the quenched and annealed cases. From heat kernel estimates established in \cite{Croydon08} or the Weyl spectral asymptotitcs established in \cite{CH08}, one can infer that CRT has walk dimension $3$.

\begin{remark}
The results above may also be obtained informally as follows. Many finitely ramified fractals have the property that
\begin{align}
\label{dSdH}
\frac{d_S}{2} = \frac{d_H}{d_H+1},
\end{align}
where $d_H$ is the Hausdorff dimension of the fractal w.r.t.\@ the resistance metric, and $d_S$ is the \emph{spectral dimension}. This is shown in \cite{KigamiLapidus}*{Theorem A.2}. On a tree, assuming that ($V_\alpha$), ($E_\beta$), and the Einstein relation ($\frac{d_S}{2} = \frac{\alpha}{\beta}$) hold, \eqref{dSdH} implies that $\beta=\alpha+1$, which corresponds to the upper bound for the admissible exponents in Proposition \ref{prop:BarlowValues}.
\end{remark}

\subsubsection{Random graphs arising from percolation}

Over the past decade there has been significant progress in the study of random walks on percolation clusters on $\mathbb{Z}^d$ (the ``ant in the labyrinth'' problem). Taking these clusters as our random graphs, one can obtain heat kernel estimates and deduce the spectral dimensions (\emph{cf.\@} the Alexander-Orbach conjecture). It turns out that many such random graphs are strongly recurrent: for instance, the incipient infinite cluster (IIC) on a tree, as well as a high-dimensional (oriented) percolation cluster, satisfies (with high probability) a volume condition and a resistance condition amounting to a volume growth exponent $2$ and time growth exponent $3$. Details are discussed in the monograph \cite{KumagaiStFlour}, in particular (3.22) and Theorem 4.1 therein.

\section{Open questions} \label{sec:open}

We conclude the paper with some outstanding questions and future directions.

\textbf{Connections to scaling limits under the Gromov-Hausdorff-vague convergence.} In a recent preprint \cite{CHK16} Croydon, Hambly, and Kumagai established the convergence of diffusion processes on a sequence of metric measure spaces $(F_N, R_N, \mu_N, \rho_N)$ ($\rho_N$ is a distinguished point in $F_N$) which converges to $(F,R,\mu,\rho)$ in the \emph{Gromov-Hausdorff-vague topology} (its distinction from the Gromov-Hausdorff topology is explained in \cite{ALW16}) and satisfies a uniform volume doubling condition. Croydon \cite{Croydon} then obtained the said convergence of diffusion processes without assuming uniform volume doubling, rather replacing it with the simpler resistance condition, \emph{cf.\@} \cite{Croydon}*{Assumption 1.1}:
\begin{align}
\label{ass:Croydon}
\lim_{r\to\infty} \limsup_{N\to\infty} R_N(\rho_N, B_N(\rho_N, r)^c) =\infty,
\end{align}
where $B_N(\rho_N, r) := \{y\in F_N: R_N(\rho_N,y)<r\}$.
We believe that our Assumptions \ref{ass:1} and \ref{ass:2} are strongly related to Croydon's condition \eqref{ass:Croydon}, and may be unified via the language of Kigami's resistance forms \cites{KigamiRF,KigamiMemoir}. This may potentially lead to a unified treatment of convergence of stochastic processes (both for the single-particle diffusion process and the multi-particle exclusion process) on a resistance space (\emph{i.e.,} space endowed with a resistance form).

\textbf{``Slowing'' the boundary reservoirs.} Take the 1D boundary-driven exclusion process on $\{1,2,\cdots, N-1\}$ subject to rate-$1$ particle exchanges at the two reservoirs at $0$ and $N$. One can make the boundaries ``slow'' by allowing particles to be created or annihilated at a rate proportional to $N^{-\theta}$ at the boundary vertices, where $\theta$ is any positive real number. In \cite{BMNS} it is shown that depending on $\theta$, the scaling limit of the empirical density is the unique solutions to the heat equation on the unit interval with one of these boundary condition: Dirichlet ($\theta\in (0,1)$), Robin ($\theta =1$), or Neumann ($\theta \in (1,\infty)$). 

As a consequence of Assumption \ref{ass:boundaryscaling}, our present work only addresses the Dirichlet boundary condition. We believe that it is feasible to modify the argument presented in \S\ref{sec:boundary} so as to obtain scaling limits with Robin or Neumann boundary conditions on a resistance space.

\textbf{Local ergodicity on non-strongly-recurrent weighted graphs.} A major outstanding problem is to establish the local ergodic theorem without Assumption \ref{ass:1}, or more concretely, on weighted graphs which support transient random walks. The same question is also open for spaces with spectral dimension equal to $2$.

\textbf{Exclusion processes on (self-similar) groups.}
Since random walks on groups \cite{Woess} are predominantly of transient type, our methodology does not extend readily to symmetric exclusion processes on groups. Nevertheless we would like to mention a potential connection.

Tanaka \cite{Tanaka} recently proved a local ergodic theorem on Cayley graphs associated to a class of infinite amenable groups which are quasi-transitive w.r.t\@ some group actions. As an application, his methods may be adapted to establish local ergodicity on a close analog of $SG$, the so-called Hanoi-tower graph (or the stretched Sierpinski gasket, $SSG$ \cite{AFK16}), which is the Schreier graph of a self-similar group \cite{NekBook}. We believe that his methodology, in conjunction with the moving particle lemma of \cite{ChenMPL}*{Theorem 1} (replacing \cite{Tanaka}*{Lemma 3.1}), should allow one to obtain the two-blocks estimate on $SSG$. (The condition $\sqrt{t_m} \leq 2\,{\rm diam}X_m$ stated in \cite{Tanaka}*{p.\@ 5} is equivalent to the condition that the random walks satisfy Gaussian or \emph{super}-Gaussian space-time estimates. Hence it does \emph{not} apply to fractal graphs like $SSG$.) The connection between Tanaka's work and analysis on self-similar groups \cite{NekTep} is a subject of future study.

\textbf{Generalization to the zero-range process.}
At the moment we do not know how to generalize our machinery from the exclusion process to the \emph{zero-range process} (which is defined in \emph{e.g.\@} \cite{KipnisLandim}*{\S2.3}). The biggest obstacle is the lack of a functional inequality (like the octopus inequality \cite{CLR09}*{Theorem 2.3}) which would imply, for instance, an optimal spectral gap or a moving particle lemma (like \cite{ChenMPL}*{Theorem 1}). So it remains elusive to establish a two-blocks estimate for the zero-range process on an infinite weighted graph.

That said, if one is only interested in obtaining a law of large numbers for the empirical density, then it is possible to bypass the two-blocks estimate. On $SG$ Jara \cite{Jara} combined the local one-block estimate of \cite{JLSLocal}, Yau's $H_{-1}$ method, and precise Green's function estimates to prove the convergence of the empirical density to the unique weak solution of a nonlinear heat equation.

\subsection*{Acknowledgements}

I am indebted to Alexander Teplyaev for his valuable insights and assistance which helped shape the article in its present form, especially on the analysis presented in \S\ref{sec:boundary}.

This paper was completed while I was participating in the Institut Henri Poincar\'e (IHP) trimester program ``Stochastic Dynamics Out of Equilibrium.'' I would like to thank the IHP and the trimester organizers for providing a productive work environment, and Patricia Gon\c calves for her elaboration of the work \cite{BMNS} and several illuminating conversations.

\begin{bibdiv}
\begin{biblist}

\bib{ABDS13}{article}{
  author={Akkermans, Eric},
  author={Bodineau, Thierry},
  author={Derrida, Bernard},
  author={Shpielberg, Ohad},
  title={Universal current fluctuations in the symmetric exclusion process and other diffusive systems},
  journal={EPL (Europhysics Letters)},
  volume={103},
  number={2},
  pages={20001},
  url={http://stacks.iop.org/0295-5075/103/i=2/a=20001},
  year={2013},
}

\bib{AldousCRT1}{article}{
   author={Aldous, David},
   title={The continuum random tree. I},
   journal={Ann. Probab.},
   volume={19},
   date={1991},
   number={1},
   pages={1--28},
   issn={0091-1798},
   review={\MR{1085326}},
}

\bib{AldousCRT2}{article}{
   author={Aldous, David},
   title={The continuum random tree. II. An overview},
   conference={
      title={Stochastic analysis},
      address={Durham},
      date={1990},
   },
   book={
      series={London Math. Soc. Lecture Note Ser.},
      volume={167},
      publisher={Cambridge Univ. Press, Cambridge},
   },
   date={1991},
   pages={23--70},
   review={\MR{1166406}},
   doi={10.1017/CBO9780511662980.003},
}

\bib{AldousCRT3}{article}{
   author={Aldous, David},
   title={The continuum random tree. III},
   journal={Ann. Probab.},
   volume={21},
   date={1993},
   number={1},
   pages={248--289},
   issn={0091-1798},
   review={\MR{1207226}},
}

\bib{AldousFill}{book}{
    AUTHOR = {Aldous, David},
    author={Fill, James Allen},
     TITLE = {Reversible Markov Chains and Random Walks on Graphs},
      YEAR = {2002},
      NOTE = {Unfinished monograph, recompiled 2014, available 
      at \url{http://www.stat.berkeley.edu/~aldous/RWG/book.html}},
      }

\bib{AFK16}{article}{
   author = {Alonso-Ruiz, Patricia},
   author = {Freiberg, Uta},
   author =  {Kigami, Jun},
    title = {{Completely Symmetric Resistance Forms on the Stretched Sierpinski Gasket}},
   journal={arXiv preprint},
    date={2016},
    eprint = {http://arxiv.org/abs/1606.08582},
}

\bib{ALW16}{article}{
   author={Athreya, Siva},
   author={L\"ohr, Wolfgang},
   author={Winter, Anita},
   title={The gap between Gromov-vague and Gromov-Hausdorff-vague topology},
   journal={Stochastic Process. Appl.},
   volume={126},
   date={2016},
   number={9},
   pages={2527--2553},
   issn={0304-4149},
   review={\MR{3522292}},
   doi={10.1016/j.spa.2016.02.009},
}

\bib{BMNS}{article}{
   author={Baldasso, Rangel},
   author={Menezes, Ot\'avio},
   author={Neumann, Adriana},
   author={Souza, Rafael R.},
   title={Exclusion Process with Slow Boundary},
   journal={J. Stat. Phys.},
   volume={167},
   date={2017},
   number={5},
   pages={1112--1142},
   issn={0022-4715},
   review={\MR{3647054}},
   doi={10.1007/s10955-017-1763-5},
}

\bib{BarlowStFlour}{article}{
   author={Barlow, Martin T.},
   title={Diffusions on fractals},
   conference={
      title={Lectures on probability theory and statistics},
      address={Saint-Flour},
      date={1995},
   },
   book={
      series={Lecture Notes in Math.},
      volume={1690},
      publisher={Springer, Berlin},
   },
   date={1998},
   pages={1--121},
   review={\MR{1668115 (2000a:60148)}},
   doi={10.1007/BFb0092537},
}

\bib{BarlowValues}{article}{
   author={Barlow, Martin T.},
   title={Which values of the volume growth and escape time exponent are
   possible for a graph?},
   journal={Rev. Mat. Iberoamericana},
   volume={20},
   date={2004},
   number={1},
   pages={1--31},
   issn={0213-2230},
   review={\MR{2076770}},
   doi={10.4171/RMI/378},
}

\bib{BBKT}{article}{
   author={Barlow, Martin T.},
   author={Bass, Richard F.},
   author={Kumagai, Takashi},
   author={Teplyaev, Alexander},
   title={Uniqueness of Brownian motion on Sierpi\'nski carpets},
   journal={J. Eur. Math. Soc. (JEMS)},
   volume={12},
   date={2010},
   number={3},
   pages={655--701},
   issn={1435-9855},
   review={\MR{2639315}},
}

\bib{BCK05}{article}{
   author={Barlow, Martin T.},
   author={Coulhon, Thierry},
   author={Kumagai, Takashi},
   title={Characterization of sub-Gaussian heat kernel estimates on strongly
   recurrent graphs},
   journal={Comm. Pure Appl. Math.},
   volume={58},
   date={2005},
   number={12},
   pages={1642--1677},
   issn={0010-3640},
   review={\MR{2177164}},
   doi={10.1002/cpa.20091},
}

\bib{BarlowPerkins}{article}{
   author={Barlow, Martin T.},
   author={Perkins, Edwin A.},
   title={Brownian motion on the Sierpi\'nski gasket},
   journal={Probab. Theory Related Fields},
   volume={79},
   date={1988},
   number={4},
   pages={543--623},
   issn={0178-8051},
   review={\MR{966175 (89g:60241)}},
   doi={10.1007/BF00318785},
}

\bib{BKS13}{article}{
   author={Begu\'e, Matthew},
   author={Kalloniatis, Tristan},
   author={Strichartz, Robert S.},
   title={Harmonic functions and the spectrum of the Laplacian on the
   Sierpinski carpet},
   journal={Fractals},
   volume={21},
   date={2013},
   number={1},
   pages={1350002, 32},
   issn={0218-348X},
   review={\MR{3042410}},
   doi={10.1142/S0218348X13500023},
}


\bib{BKL95}{article}{
   author={Benois, O.},
   author={Kipnis, C.},
   author={Landim, C.},
   title={Large deviations from the hydrodynamical limit of mean zero
   asymmetric zero range processes},
   journal={Stochastic Process. Appl.},
   volume={55},
   date={1995},
   number={1},
   pages={65--89},
   issn={0304-4149},
   review={\MR{1312149 (96a:60077)}},
   doi={10.1016/0304-4149(95)91543-A},
}

\bib{BDGJL03}{article}{
   author={Bertini, L.},
   author={De Sole, A.},
   author={Gabrielli, D.},
   author={Jona-Lasinio, G.},
   author={Landim, C.},
   title={Large deviations for the boundary driven symmetric simple
   exclusion process},
   journal={Math. Phys. Anal. Geom.},
   volume={6},
   date={2003},
   number={3},
   pages={231--267},
   issn={1385-0172},
   review={\MR{1997915 (2004j:82034)}},
   doi={10.1023/A:1024967818899},
}

\bib{BDGJL07}{article}{
   author={Bertini, L.},
   author={De Sole, A.},
   author={Gabrielli, D.},
   author={Jona-Lasinio, G.},
   author={Landim, C.},
   title={Large deviations of the empirical current in interacting particle
   systems},
   language={English, with Russian summary},
   journal={Teor. Veroyatn. Primen.},
   volume={51},
   date={2006},
   number={1},
   pages={144--170},
   issn={0040-361X},
   translation={
      journal={Theory Probab. Appl.},
      volume={51},
      date={2007},
      number={1},
      pages={2--27},
      issn={0040-585X},
   },
   review={\MR{2324172 (2008i:60166)}},
   doi={10.1137/S0040585X97982256},
}

\bib{BDGJL15}{article}{
  title = {Macroscopic fluctuation theory},
   author={Bertini, L.},
   author={De Sole, A.},
   author={Gabrielli, D.},
   author={Jona-Lasinio, G.},
   author={Landim, C.},
  journal = {Rev. Mod. Phys.},
  volume = {87},
  issue = {2},
  pages = {593--636},
  numpages = {44},
  year = {2015},
  month = {Jun},
  publisher = {American Physical Society},
  doi = {10.1103/RevModPhys.87.593},
  url = {http://link.aps.org/doi/10.1103/RevModPhys.87.593}
}

\bib{BDAdditivity}{article}{
  title = {Current Fluctuations in Nonequilibrium Diffusive Systems: An Additivity Principle},
  author = {Bodineau, T.},
  author = {Derrida, B.},
  journal = {Phys. Rev. Lett.},
  volume = {92},
  issue = {18},
  pages = {180601},
  numpages = {4},
  year = {2004},
  month = {May},
  publisher = {American Physical Society},
  doi = {10.1103/PhysRevLett.92.180601},
  url = {http://link.aps.org/doi/10.1103/PhysRevLett.92.180601},
}

\bib{BodineauLagouge}{article}{
    AUTHOR = {Bodineau, Thierry},
    AUTHOR = {Lagouge, Maxime},
     TITLE = {Large deviations of the empirical currents for a
              boundary-driven reaction diffusion model},
   JOURNAL = {Ann. Appl. Probab.},
  FJOURNAL = {The Annals of Applied Probability},
    VOLUME = {22},
      YEAR = {2012},
    NUMBER = {6},
     PAGES = {2282--2319},
      ISSN = {1050-5164},
   MRCLASS = {60F10 (60K35 82C22)},
  MRNUMBER = {3024969},
       DOI = {10.1214/11-AAP826},
       URL = {http://dx.doi.org/10.1214/11-AAP826},
}


\bib{Ngasket}{article}{
   author={Boyle, Brighid},
   author={Cekala, Kristin},
   author={Ferrone, David},
   author={Rifkin, Neil},
   author={Teplyaev, Alexander},
   title={Electrical resistance of $N$-gasket fractal networks},
   journal={Pacific J. Math.},
   volume={233},
   date={2007},
   number={1},
   pages={15--40},
   issn={0030-8730},
   review={\MR{2366367}},
   doi={10.2140/pjm.2007.233.15},
}

\bib{CLR09}{article}{
   author={Caputo, Pietro},
   author={Liggett, Thomas M.},
   author={Richthammer, Thomas},
   title={Proof of Aldous' spectral gap conjecture},
   journal={J. Amer. Math. Soc.},
   volume={23},
   date={2010},
   number={3},
   pages={831--851},
   issn={0894-0347},
   review={\MR{2629990 (2011k:60316)}},
   doi={10.1090/S0894-0347-10-00659-4},
}

\bib{CommuteTime}{article}{
   author={Chandra, Ashok K.},
   author={Raghavan, Prabhakar},
   author={Ruzzo, Walter L.},
   author={Smolensky, Roman},
   author={Tiwari, Prasoon},
   title={The electrical resistance of a graph captures its commute and
   cover times},
   journal={Comput. Complexity},
   volume={6},
   date={1996/97},
   number={4},
   pages={312--340},
   issn={1016-3328},
   review={\MR{1613611}},
   doi={10.1007/BF01270385},
}

\bib{ChenMPL}{article}{
   author={Chen, Joe P.},
   title={The moving particle lemma for the exclusion process on a weighted graph},
   journal={arXiv preprint},
    date={2016},
    eprint = {http://arxiv.org/abs/1606.01577},
}

\bib{SSEPreview}{article}{
   author={Chen, Joe P.},
   author={Hinz, Michael},
   author={Teplyaev, Alexander},
   title={From non-symmetric particle systems to non-linear PDEs on fractals},
   journal={To appear in the proceedings for ``Stochastic PDEs and Related Fields: an international conference in honor of Michael R\"ockner's 60th birthday.''},
   eprint={http://arxiv.org/abs/1702.03376},
   date={2017},
}

\bib{ChenTeplyaevSGHydro}{article}{
   author={Chen, Joe P.},
   author={Hinz, Michael},
   author={Teplyaev, Alexander},
   title={Hydrodynamic limit of the the boundary-driven exclusion process on the Sierpinski gasket},
   date={2017},
   journal={preprint},
}

\bib{CHTPDE}{article}{
   author={Chen, Joe P.},
   author={Hinz, Michael},
   author={Teplyaev, Alexander},
   title={Semilinear evolution equations on resistance spaces},
   journal={preprint},
    date={2017},
}


%
%


\bib{Croydon08}{article}{
   author={Croydon, David A.},
   title={Volume growth and heat kernel estimates for the continuum random
   tree},
   journal={Probab. Theory Related Fields},
   volume={140},
   date={2008},
   number={1-2},
   pages={207--238},
   issn={0178-8051},
   review={\MR{2357676}},
   doi={10.1007/s00440-007-0063-4},
}

\bib{Croydon}{article}{
   author={Croydon, David},
   title={Scaling limits of stochastic processes associated with resistance forms},
   journal={arXiv preprint},
    date={2016},
    eprint = {http://arxiv.org/abs/1609.05666},
}

\bib{CH08}{article}{
   author={Croydon, David},
   author={Hambly, Ben},
   title={Self-similarity and spectral asymptotics for the continuum random
   tree},
   journal={Stochastic Process. Appl.},
   volume={118},
   date={2008},
   number={5},
   pages={730--754},
   issn={0304-4149},
   review={\MR{2411518}},
   doi={10.1016/j.spa.2007.06.005},
}

\bib{CHK16}{article}{
   author={Croydon, David},
   author={Hambly, Ben},
   author={Kumagai, Takashi},
   title={Time-changes of stochastic processes associated with resistance forms},
   journal={arXiv preprint},
    date={2016},
    eprint = {http://arxiv.org/abs/1609.02120},
}

\bib{Delmotte}{article}{
   author={Delmotte, Thierry},
   title={Graphs between the elliptic and parabolic Harnack inequalities},
   journal={Potential Anal.},
   volume={16},
   date={2002},
   number={2},
   pages={151--168},
   issn={0926-2601},
   review={\MR{1881595}},
   doi={10.1023/A:1012632229879},
}

\bib{DoyleSnell}{book}{
   author={Doyle, Peter G.},
   author={Snell, J. Laurie},
   title={Random walks and electric networks},
   series={Carus Mathematical Monographs},
   volume={22},
   publisher={Mathematical Association of America, Washington, DC},
   date={1984},
   pages={xiv+159},
   isbn={0-88385-024-9},
   review={\MR{920811 (89a:94023)}},
}

\bib{DuquesneLeGall}{article}{
   author={Duquesne, Thomas},
   author={Le Gall, Jean-Fran\c{c}ois},
   title={Probabilistic and fractal aspects of L\'evy trees},
   journal={Probab. Theory Related Fields},
   volume={131},
   date={2005},
   number={4},
   pages={553--603},
   issn={0178-8051},
   review={\MR{2147221}},
   doi={10.1007/s00440-004-0385-4},
}

\bib{IPSStFlour}{collection}{
   author={Durrett, Rick},
   author={Liggett, Thomas M.},
   author={Spitzer, Frank},
   author={Sznitman, Alain-Sol},
   title={Interacting particle systems at Saint-Flour},
   series={Probability at Saint-Flour},
   publisher={Springer, Heidelberg},
   date={2012},
   pages={viii+331},
   isbn={978-3-642-25297-6},
   review={\MR{3075635}},
}


\bib{GrinsteadSnell}{book}{
	author={Grinstead, Charles M.},
	author={Snell, J. Laurie},
	title={Introduction to Probability},
	edition={Second edition},
	publisher={American Mathematical Society},
	date={1997},
	note={Available at \url{https://math.dartmouth.edu/~prob/prob/prob.pdf}},
}

\bib{GPV88}{article}{
   author={Guo, M. Z.},
   author={Papanicolaou, G. C.},
   author={Varadhan, S. R. S.},
   title={Nonlinear diffusion limit for a system with nearest neighbor
   interactions},
   journal={Comm. Math. Phys.},
   volume={118},
   date={1988},
   number={1},
   pages={31--59},
   issn={0010-3616},
   review={\MR{954674 (89m:60255)}},
}

\bib{HinoKumagai}{article}{
   author={Hino, Masanori},
   author={Kumagai, Takashi},
   title={A trace theorem for Dirichlet forms on fractals},
   journal={J. Funct. Anal.},
   volume={238},
   date={2006},
   number={2},
   pages={578--611},
   issn={0022-1236},
   review={\MR{2253734}},
   doi={10.1016/j.jfa.2006.05.012},
}

\bib{Jara}{article}{
   author={Jara, Milton},
   title={Hydrodynamic limit for a zero-range process in the Sierpinski
   gasket},
   journal={Comm. Math. Phys.},
   volume={288},
   date={2009},
   number={2},
   pages={773--797},
   issn={0010-3616},
   review={\MR{2501000 (2010f:60279)}},
   doi={10.1007/s00220-009-0746-z},
}

\bib{JLSLocal}{article}{
   author={Jara, M. D.},
   author={Landim, C.},
   author={Sethuraman, S.},
   title={Nonequilibrium fluctuations for a tagged particle in mean-zero
   one-dimensional zero-range processes},
   journal={Probab. Theory Related Fields},
   volume={145},
   date={2009},
   number={3-4},
   pages={565--590},
   issn={0178-8051},
   review={\MR{2529439 (2011c:60316)}},
   doi={10.1007/s00440-008-0178-2},
}

\bib{Kigami}{book}{
   author={Kigami, Jun},
   title={Analysis on fractals},
   series={Cambridge Tracts in Mathematics},
   volume={143},
   publisher={Cambridge University Press, Cambridge},
   date={2001},
   pages={viii+226},
   isbn={0-521-79321-1},
   review={\MR{1840042 (2002c:28015)}},
   doi={10.1017/CBO9780511470943},
}

\bib{KigamiRF}{article}{
   author={Kigami, Jun},
   title={Harmonic analysis for resistance forms},
   journal={J. Funct. Anal.},
   volume={204},
   date={2003},
   number={2},
   pages={399--444},
   issn={0022-1236},
   review={\MR{2017320}},
   doi={10.1016/S0022-1236(02)00149-0},
}

\bib{KigamiMemoir}{article}{
   author={Kigami, Jun},
   title={Resistance forms, quasisymmetric maps and heat kernel estimates},
   journal={Mem. Amer. Math. Soc.},
   volume={216},
   date={2012},
   number={1015},
   pages={vi+132},
   issn={0065-9266},
   isbn={978-0-8218-5299-6},
   review={\MR{2919892}},
   doi={10.1090/S0065-9266-2011-00632-5},
}

\bib{KigamiLapidus}{article}{
   author={Kigami, Jun},
   author={Lapidus, Michel L.},
   title={Weyl's problem for the spectral distribution of Laplacians on
   p.c.f.\ self-similar fractals},
   journal={Comm. Math. Phys.},
   volume={158},
   date={1993},
   number={1},
   pages={93--125},
   issn={0010-3616},
   review={\MR{1243717}},
}

\bib{KipnisLandim}{book}{
   author={Kipnis, Claude},
   author={Landim, Claudio},
   title={Scaling limits of interacting particle systems},
   series={Grundlehren der Mathematischen Wissenschaften [Fundamental
   Principles of Mathematical Sciences]},
   volume={320},
   publisher={Springer-Verlag, Berlin},
   date={1999},
   pages={xvi+442},
   isbn={3-540-64913-1},
   review={\MR{1707314 (2000i:60001)}},
   doi={10.1007/978-3-662-03752-2},
}

\bib{KOV89}{article}{
   author={Kipnis, C.},
   author={Olla, S.},
   author={Varadhan, S. R. S.},
   title={Hydrodynamics and large deviation for simple exclusion processes},
   journal={Comm. Pure Appl. Math.},
   volume={42},
   date={1989},
   number={2},
   pages={115--137},
   issn={0010-3640},
   review={\MR{978701 (91h:60115)}},
   doi={10.1002/cpa.3160420202},
}

\bib{KumagaiStFlour}{book}{
   author={Kumagai, Takashi},
   title={Random walks on disordered media and their scaling limits},
   series={Lecture Notes in Mathematics},
   volume={2101},
   note={Lecture notes from the 40th Probability Summer School held in
   Saint-Flour, 2010;
   École d'Été de Probabilités de Saint-Flour. [Saint-Flour Probability
   Summer School]},
   publisher={Springer, Cham},
   date={2014},
   pages={x+147},
   isbn={978-3-319-03151-4},
   isbn={978-3-319-03152-1},
   review={\MR{3156983}},
}


\bib{MCBook}{book}{
   author={Levin, David A.},
   author={Peres, Yuval},
   author={Wilmer, Elizabeth L.},
   title={Markov chains and mixing times},
   note={With a chapter by James G. Propp and David B. Wilson},
   publisher={American Mathematical Society, Providence, RI},
   date={2009},
   pages={xviii+371},
   isbn={978-0-8218-4739-8},
   review={\MR{2466937 (2010c:60209)}},
}

\bib{LiggettBook}{book}{
   author={Liggett, Thomas M.},
   title={Interacting particle systems},
   series={Classics in Mathematics},
   note={Reprint of the 1985 original},
   publisher={Springer-Verlag, Berlin},
   date={2005},
   pages={xvi+496},
   isbn={3-540-22617-6},
   review={\MR{2108619}},
}

\bib{LyonsPeres}{book}{
author = {Lyons, Russell},
author = {Peres, Yuval},
title = {Probability on Trees and Networks},
publisher = {Cambridge University Press},
date = {2017},
note = {Available at \url{http://pages.iu.edu/~rdlyons/prbtree/prbtree.html}},
}

\bib{NekBook}{book}{
   author={Nekrashevych, Volodymyr},
   title={Self-similar groups},
   series={Mathematical Surveys and Monographs},
   volume={117},
   publisher={American Mathematical Society, Providence, RI},
   date={2005},
   pages={xii+231},
   isbn={0-8218-3831-8},
   review={\MR{2162164 (2006e:20047)}},
   doi={10.1090/surv/117},
}

\bib{NekTep}{article}{
   author={Nekrashevych, Volodymyr},
   author={Teplyaev, Alexander},
   title={Groups and analysis on fractals},
   conference={
      title={Analysis on graphs and its applications},
   },
   book={
      series={Proc. Sympos. Pure Math.},
      volume={77},
      publisher={Amer. Math. Soc., Providence, RI},
   },
   date={2008},
   pages={143--180},
   review={\MR{2459868 (2010f:28017)}},
   doi={10.1090/pspum/077/2459868},
}

\bib{Pearce}{article}{
   author={Pearce, Lynn Hauser},
   title={Random walks on trees},
   journal={Discrete Math.},
   volume={30},
   date={1980},
   number={3},
   pages={269--276},
   issn={0012-365X},
   review={\MR{573642}},
   doi={10.1016/0012-365X(80)90234-4},
}

\bib{Petrov}{book}{
   author={Petrov, V. V.},
   title={Sums of independent random variables},
   note={Translated from the Russian by A. A. Brown;
   Ergebnisse der Mathematik und ihrer Grenzgebiete, Band 82},
   publisher={Springer-Verlag, New York-Heidelberg},
   date={1975},
   pages={x+346},
   review={\MR{0388499}},
}

\bib{Ross}{book}{
   author={Ross, Sheldon},
   title={A First Course in Probability},
   edition={9},
   publisher={Pearson},
   date={2012},
   isbn={978-0321794772},
}

\bib{Spitzer}{article}{
   author={Spitzer, Frank},
   title={Interaction of Markov processes},
   journal={Advances in Math.},
   volume={5},
   date={1970},
   pages={246--290},
   issn={0001-8708},
   review={\MR{0268959}},
}

\bib{Spohn}{book}{
   author={Spohn, Herbert},
   title={Large Scale Dynamics of Interacting Particles},
   publisher={Springer},
   date={1991},
}

\bib{StrichartzBook}{book}{
   author={Strichartz, Robert S.},
   title={Differential equations on fractals},
   note={A tutorial},
   publisher={Princeton University Press, Princeton, NJ},
   date={2006},
   pages={xvi+169},
   isbn={978-0-691-12731-6},
   isbn={0-691-12731-X},
   review={\MR{2246975 (2007f:35003)}},
}

\bib{Tanaka}{article}{
   author={Tanaka, Ryokichi},
   title={A note on a local ergodic theorem for an infinite tower of
   coverings},
   conference={
      title={Mathematical challenges in a new phase of materials science},
   },
   book={
      series={Springer Proc. Math. Stat.},
      volume={166},
      publisher={Springer, [Tokyo]},
   },
   date={2016},
   pages={101--116},
   review={\MR{3557606}},
}

\bib{Telcs01}{article}{
   author={Telcs, Andr{\'a}s},
   title={Volume and time doubling of graphs and random walks: the strongly
   recurrent case},
   journal={Comm. Pure Appl. Math.},
   volume={54},
   date={2001},
   number={8},
   pages={975--1018},
   issn={0010-3640},
   review={\MR{1829530}},
   doi={10.1002/cpa.1015},
}

\bib{Telcs01_2}{article}{
   author={Telcs, Andr{\'a}s},
   title={Local sub-Gaussian estimates on graphs: the strongly recurrent
   case},
   journal={Electron. J. Probab.},
   volume={6},
   date={2001},
   pages={no. 22, 33 pp. (electronic)},
   issn={1083-6489},
   review={\MR{1873299}},
   doi={10.1214/EJP.v6-95},
}

\bib{TelcsBook}{book}{
   author={Telcs, Andr{\'a}s},
   title={The art of random walks},
   series={Lecture Notes in Mathematics},
   volume={1885},
   publisher={Springer-Verlag, Berlin},
   date={2006},
   pages={viii+195},
   isbn={978-3-540-33027-1},
   isbn={3-540-33027-5},
   review={\MR{2240535}},
}

\bib{Woess}{book}{
   author={Woess, Wolfgang},
   title={Random walks on infinite graphs and groups},
   series={Cambridge Tracts in Mathematics},
   volume={138},
   publisher={Cambridge University Press, Cambridge},
   date={2000},
   pages={xii+334},
   isbn={0-521-55292-3},
   review={\MR{1743100}},
   doi={10.1017/CBO9780511470967},
}

\end{biblist}
\end{bibdiv}

\end{document}